\definecolor{refblue}{HTML}{1002f5}
\definecolor{refgreen}{HTML}{019100}
\setlist{topsep=6pt, itemsep=0.4em, after=\vspace{2pt}}
\numberwithin{equation}{section}
\newtheorem{mainthm}{Theorem}
\newtheorem{thm}{Theorem}[section]
\newtheorem{cor}[thm]{Corollary}
\newtheorem{lem}[thm]{Lemma}
\theoremstyle{definition}
\theoremstyle{remark}
\newtheorem{rem}[thm]{Remark}
\newcommand{\C}{\mathbb{C}}
\newcommand{\D}{\mathbb{D}}
\newcommand{\R}{\mathbb{R}}
\DeclareMathOperator{\re}{Re}
\DeclareMathOperator{\im}{Im}
\newcommand{\bigO}{\mathcal{O}}
\begin{document}                   

\subjclass[2020]{60B20, 30E15, 41A10, 47B32} 
\keywords{Edge universality, random normal matrices, planar Coulomb gases}

\title[A direct approach to edge universality]
{A direct approach to soft and hard edge universality for random normal matrices}

\author{Joakim Cronvall \and Aron Wennman} 

\begin{abstract}
We develop a unified approach to universality of local scaling limits 
for eigenvalues of random normal matrices, or equivalently for planar 
Coulomb gases at inverse temperature \(\beta=2\). The approach is {\em direct} 
in that it does not rely on expressing the kernels in terms of orthogonal 
polynomials. There are three main results. The first is a proof of universality 
at hard edges with no symmetry assumptions on either the potential or the hard edge. 
We also prove local universality at regular soft edges for droplets with several 
components, and lastly for soft/hard edges where a hard edge perfectly aligns 
with the droplet boundary. The main ingredients are Paley-Wiener type spectral 
embeddings for the Hilbert space associated with a limiting kernel, and the 
construction of weighted polynomials peaking near a given boundary point. 
\end{abstract}

\newgeometry{hmargin=3.2cm, vmargin={2.7cm, 3.3cm}}

\maketitle

\thispagestyle{empty}

\section{Introduction}
A central theme in random matrix theory, going back to the work of Wigner 
\cite{Wigner1958}, is the universality of local eigenvalue statistics.
For unitary Hermitian ensembles this phenomenon is well understood: local 
scaling limits in the bulk, at soft edges and at hard edges are governed 
respectively by the {\em sine}, {\em Airy} and {\em Bessel kernels}, see 
\cites{DG07,DKMVZ1999,TWAiry,TWBessel}. This holds independently of the 
choice of potential, provided that some mild regularity and non-criticality 
assumptions are met. The sine and Airy kernels appear also for Hermitian 
Wigner matrices with i.i.d.\ entries, see e.g.\ \cites{TaoVu2011,Soshnikov1999,
ErdosYauYin2012,EPRSY}. We refer to the surveys \cites{KuijlaarsUniversality,
DeiftUniversality,ErdosSurv} for additional background and perspectives.

This paper concerns the universality of local eigenvalue statistics for a class of 
non-Hermitian random matrices, specifically for the {\em normal matrix ensembles}.
These ensembles are natural non-Hermitian analogues of the classical unitary 
matrix ensembles, and just like in the Hermitian case the eigenvalues form 
Coulomb gases of charged particles. Universality of local scaling limits 
has been established in the bulk \cites{BermanBulk,AmeurHedenmalmMakarov2011} 
and at regular soft edges \cite{HedenmalmWennmanActa}. The roles of the sine 
and Airy kernels are played by the {\em Ginibre kernel} 
\begin{equation}
\label{eq:Gin}
G(\zeta,\eta)=e^{\zeta\bar{\eta}-\frac12(|\zeta|^2+|\eta|^2)}
\end{equation}
and the {\em Faddeeva kernel}, defined in terms of the complementary
error function \(\operatorname{erfc}(z)\) by
\begin{equation}
\label{eq:erfc}
H(\zeta,\eta)=\frac12\operatorname{erfc}\Big(\frac{\zeta+\bar{\eta}}{\sqrt{2}}\Big) 
\,e^{\zeta\bar{\eta}-\frac12(|\zeta|^2+|\eta|^2)}.
\end{equation}
To the best of our knowledge, these kernels first appeared as random matrix
scaling limits in \cite{Ginibre} and \cite{ForresterHonner}, respectively. 
We note in passing that the very same kernels \eqref{eq:Gin} and \eqref{eq:erfc} 
govern the local eigenvalue statistics of non-Hermitian Wigner-type random 
matrices as well \cites{TaoVu2015NonHerm,Erdos,CampbellCipolloniErdosJi2025}.

\newpage
\restoregeometry

The geometry of the support of the associated equilibrium measure (the \enquote{droplet})
is richer in two dimensions than in one, making the edge behavior rather more 
involved. Apart from a few case studies, the general theory is so far limited 
to connected droplets with analytic boundary, and only soft edges have been explored 
in generality. Edge universality has therefore remained open at hard edges, and also at 
soft edges for disconnected (\enquote{multi-cut}) droplets. The goal of this paper is 
to develop a unified approach that resolves these problems. A key feature of our 
approach is that it is {\em direct}: it altogether avoids orthogonal 
polynomials, which have proven difficult to analyze in these regimes. In the case 
of the real line, several powerful direct approaches have appeared, see, e.g., 
\cites{ALS,ELW,ELS,Lub,LevinLubinsky2008Varying} and the references therein. 
However, these rely essentially on the self-adjoint nature of the problem or 
apply only to fixed measures. In particular, the limiting kernels that arise in these
settings are reproducing kernels for de Branges spaces, see e.g.\ \cite{LubinskyDeBranges}.

Our main results (Theorems~\ref{thm:main-hard-edge}, \ref{thm:main-soft-edge} and 
\ref{thm:main-soft-hard-edge} below) settle the question of universality of local 
eigenvalue statistics at regular interfaces in the random normal matrix model, 
including the previously inaccessible hard edges. The proofs moreover give a 
conceptual description of the limiting kernels in terms of Hilbert spaces of 
entire functions. We assume throughout that the weights and the interfaces are 
real-analytic, but impose no symmetry conditions on either the potential 
or the interface.

\subsection{Interface transitions in the random normal matrix model}
We proceed to introduce the normal matrix model and explain how the various edge 
regimes arise. Fix a real-analytic function \(Q:\C\to \mathbb{R}\) 
(the \enquote{potential}) subject to the growth bound
\begin{equation}
\label{eq:growth}
\liminf_{|z|\to+\infty}\frac{Q(z)}{\log|z|^2}>1,
\end{equation}
and consider the random \(n\times n\)-matrix \(M\) picked from the probability 
measure
\begin{equation}
\label{eq:RNM}
\frac{1}{\mathcal{Z}_n}e^{-n\mathrm{Tr}(Q(M))}dM,
\end{equation}
where \(dM\) is the Riemannian volume form
on the (non-singular part of the) manifold of {\em normal} \(n\times n\)-matrices 
and \(\mathcal{Z}_n\) is a normalizing constant. A remarkable but well-known fact 
is that the eigenvalues of \(M\) follow a Coulomb gas law on the complex plane 
with external field \(Q(z)\) and inverse temperature \(\beta=2\). That is, the 
joint eigenvalue density with respect to Lebesgue measure on \(\C^n\) is given by
\begin{equation}
\label{eq:RNM-eigenvalues}
\frac{1}{Z_n}\prod_{1\le j<k\le n}|z_j-z_k|^2 \prod_{j=1}^n e^{-n Q(z_j)}
\end{equation}
where \(Z_n\) is a (different) normalizing constant. As \(n\) tends to infinity, 
the internal repulsive forces between the eigenvalues (or {\enquote{particles}}) are 
counterbalanced by the external confinement and as a result the gas condensates 
to a compact set called the {\em droplet}. To see what this set is, consider 
the weighted logarithmic energy functional
\begin{equation}
\label{eq:energy}
I_Q(\mu)=\int_{\C^2}\log\frac{1}{|z-w|}d\mu(z)d\mu(w)+\int_{\C}Q(z)d\mu(z).
\end{equation}
The functional \(I_Q(\mu)\) admits a unique minimizer \(\sigma=\sigma_Q\) among
all probability measures on \(\C\), which we refer to as the {\em (weighted) 
equilibrium measure}. By classical potential theory, the equilibrium measure 
is absolutely continuous with respect to normalized area measure 
\(dA(z)=\frac{1}{\pi} dxdy\) and its density is given by
\(\Delta Q(z)1_{{S}}\) for some compact set \({S}\), see 
\cites{SaffTotik,HedenmalmMakarovProcLondon} (our Laplacian 
\(\Delta=\partial\bar\partial\) is a quarter of the usual one).
We then have that
\[
\lim_{n\to\infty}\frac{1}{n}\sum_{j=1}^n f(z_j)= \int_{{S}}f(z)\Delta Q(z)dA(z)
\]
in probability for any bounded continuous function \(f\). In fact, much stronger 
estimates of large deviation type are known, see e.g.  
\cites{HedenmalmMakarovProcLondon,GZ-LDP,LS20017,BBNY19}.

A key feature of the eigenvalue process \eqref{eq:RNM-eigenvalues} is that it forms 
a determinantal point process. Its {\em correlation kernel}, denoted \(K_n(z,w)\), 
is the reproducing kernel for the space of weighted polynomials
\begin{equation}
\label{def:W-n}
\mathcal{W}_n=\Big\{p(z) e^{-nQ(z)/2}: p\in \C[z],\, \deg p\le n-1\Big\}
\end{equation}
endowed with the Hilbert space structure of \(L^2(\C,dA)\). Denoting by 
\(P_{0,n}, P_{1,n},\ldots, P_{n-1,n}\) the first \(n\) normalized orthogonal
polynomials in \(L^2(\C, e^{-nQ}dA)\) the kernel is given by
\begin{equation}
\label{def:K-n}
K_n(z,w)=\sum_{j=0}^{n-1}P_{j,n}(z)\overline{P_{j,n}(w)}\hspace{1pt}
e^{-\frac{n}{2}(Q(z)+Q(w))}.
\end{equation}
The condensation of the eigenvalues to the droplet is reflected by the weak convergence
of the one-point function
\[
\frac{1}{n}K_n(z,z)\,dA\to d\sigma.
\]

The interface \(\partial {S}\) is not
prescribed, but emerges naturally as a free boundary in the associated weighted potential 
theory problem. When \(Q\) is strictly subharmonic near the droplet boundary, 
there is a sharp density transition across the interface \(\partial {S}\) which is smooth if
we blow up at scale \(n^{-1/2}\). Such interfaces are referred to as {\em soft edges}. By now 
it is well understood that this soft-edge transition is governed by a universal scaling limit 
involving the complementary Gaussian error function. More precisely, if \(z_0\in\partial{S}\) 
is a regular boundary point with outward unit normal \(e^{i\theta}\), then for some {\em cocycles} 
\(c_n\) it should hold that
\[
\frac{1}{n\Delta Q(z_0)}\hspace{1pt} K_n\left(z_0+\frac{e^{i\theta}\zeta}{\sqrt{n\Delta Q(z_0)}},
z_0+\frac{e^{i\theta}\eta}{\sqrt{n\Delta Q(z_0)}}\right)c_n(\zeta,\eta)\to H(\zeta,\eta),
\] 
where \(H\) is the kernel in \eqref{eq:erfc}.
This was initially observed for the Ginibre ensemble \cite{ForresterHonner},
then for other special potentials for which the 
orthogonal polynomials can be analyzed using e.g.\ Riemann-Hilbert methods 
\cites{LeeRiser,BaloghBertolaLeeMcLaughlin} and under an priori translation-invariance 
assumption on the limiting kernel \cite{RescalingWardAKM}. A first general universality result 
for soft edges was obtained in \cite{HedenmalmWennmanActa} assuming that 
\(\partial{S}\) is a single, regular Jordan curve. This made use of a detailed
asymptotic expansion of the associated orthogonal polynomials
obtained in the same paper.

{\em Hard edges} are interfaces that arise when the eigenvalues are 
forbidden from occupying a given region \(\Omega\), see Figure~\ref{fig:hard-edge} 
for an illustration. The hard-edge constraint corresponds to orthogonal polynomials 
and reproducing kernels with respect to the truncated area measure 
\(1_{\C\setminus\Omega} e^{-nQ}dA\). This type of interface transition is less understood, 
and approaching it via orthogonal polynomials seems considerably more difficult than 
in the soft-edge case. Indeed, the local hard-edge scaling limits are known only 
under radial symmetry constraints \cites{Seo,AmeurCronvallHardWall,Shirai,ZyczkowskiSommers}
or for Coulomb gases associated with classical Bergman spaces with fixed weights 
\cites{LubinskyBergman, TotikChristoffelUniversality}. The reason for the difficulties 
appears to be that the orthogonal polynomials can have a very complicated behavior in 
the presence of hard edges. In particular this is so for varying exponential weights
and degrees \(k=\tau n\) such that \(\partial\Omega\) intersects the boundary of the 
associated \(\tau\)-droplet \(S_\tau\) (cf.\ \cite{HedenmalmWennmanActa}*{p.\ 316}).
Contrary to the bulk and soft-edge cases the natural scaling near a hard edge is \(n^{-1}\). 
The kernel appearing in \cite{LubinskyBergman} is
\begin{equation}
\label{eq:def-B}
B(\zeta,\eta)=\int_0^1 te^{-t(\zeta+\bar{\eta})}dt,
\end{equation}
and this should be a natural candidate for a universal
hard-edge kernel.

\subsection{Main results} 
\label{s:main-results}
\begin{figure}
\centering
\includegraphics[width=.8\textwidth]{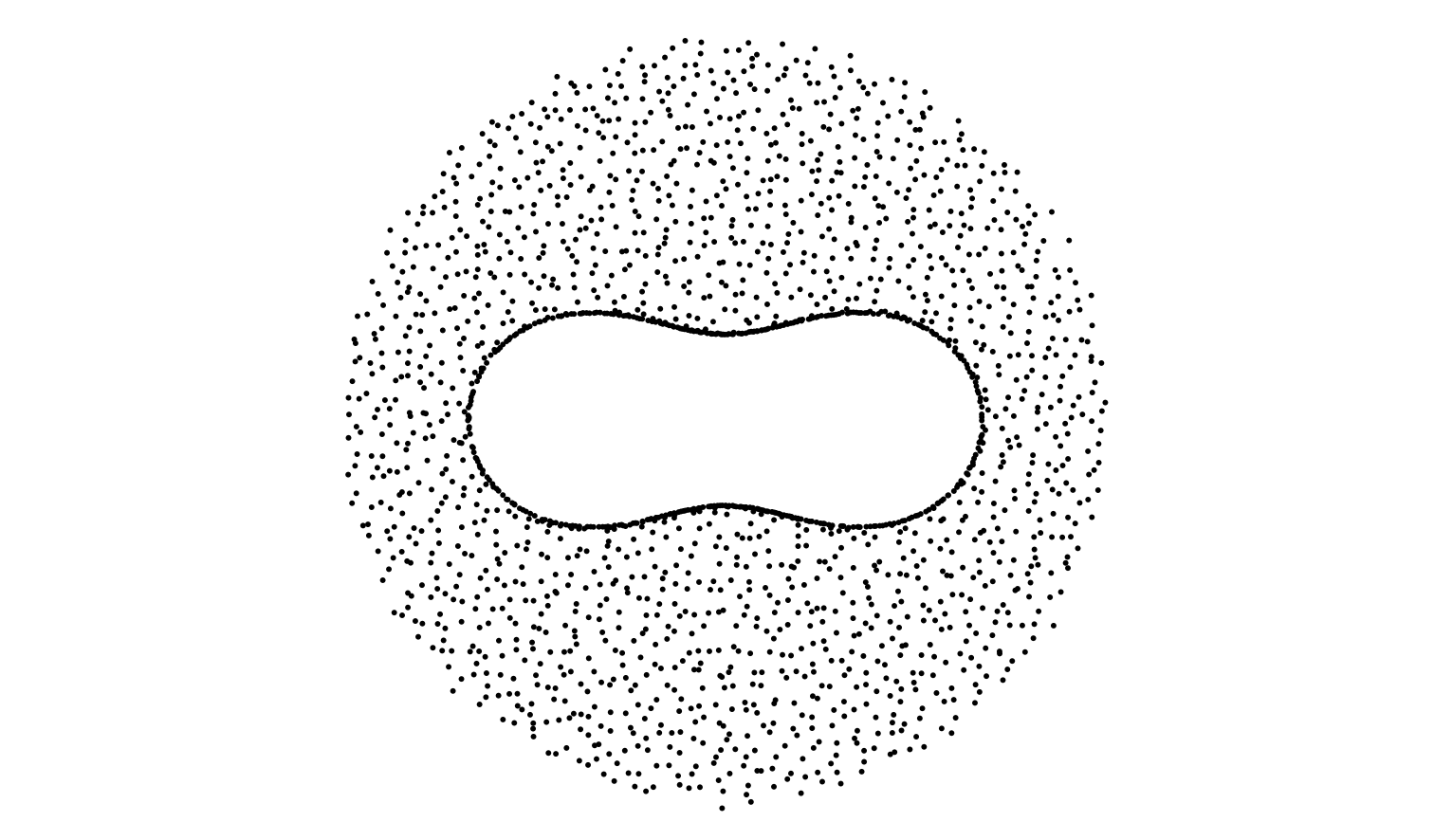}
\caption{Illustration of the Ginibre ensemble (\(n=2000\)) with 
a hard edge along an interior \enquote{hole} \(\Omega\). The outer edge
is a regular soft edge.}
\label{fig:hard-edge}
\end{figure}
We make the following standing assumptions. Let \(Q\) be a real-analytic potential 
subject to the logarithmic growth bound \eqref{eq:growth} and denote by 
\(S=\operatorname{supp} \sigma_Q\) the associated droplet. We assume throughout 
that \(Q\) is strictly subharmonic in a neighborhood of \(S\), and that
\(S\) consists of a finite number of finitely connected domains with analytic boundaries. 
Standard potential theoretic considerations imply that there exists a
constant \(F_\sigma\) for which we have \(Q+2U^{\sigma}\ge F_\sigma\) 
with equality on \(S=\mathrm{supp}(\sigma)\). We assume that {\em strict} 
inequality holds away from the droplet.

Denote by \(\Omega\) a Jordan domain with real-analytic boundary, and condition the 
Coulomb gas of eigenvalues \eqref{eq:RNM-eigenvalues} on the event that no particle
lies in \(\Omega\). In this way, we obtain a new determinantal point process whose 
correlation kernel \(K_n^\Omega(z,w)\) is the reproducing kernel for the space 
\(\mathcal{W}_n\) of weighted polynomials with the inner product of 
\(L^2(\C\setminus\Omega, dA)\). See Figure~\ref{fig:hard-edge} for an illustration.

Macroscopically, the gas is governed by a constrained equilibrium measure which minimizes the 
weighted logarithmic energy \(I_Q(\mu)\) from \eqref{eq:energy} over all probability measures 
with \(\mu(\Omega)=0\). The constrained equilibrium measure is known to take the form
\[
\sigma_\Omega = \sigma\vert_{{S}\setminus \Omega}
+\mathrm{Bal}(\sigma\vert_{\Omega}, \partial\Omega)  
\]
where \(\sigma=\sigma_Q\) is the unconstrained equilibrium measure and 
\(\mathrm{Bal}(\nu, \partial\Omega)\) denotes the {\em balayage} of a measure 
\(\nu\) on \(\Omega\) to \(\partial\Omega\), see Section~\ref{s:hard_wall} 
below and \cites{ASZ,AdhikariReddy,CharlierBalayage} for details.
The balayage measure is absolutely continuous with respect to 
arc-length on \(\partial\Omega\), and we denote its
density by \(\rho=\rho_\Omega\). For a point \(z_0\in \partial\Omega\), 
we denote by \(e^{i\theta}\) the outer unit normal to \(\partial\Omega\) 
at \(z_0\), pointing into \({S}\).

\begin{mainthm}[Hard edges]\label{thm:main-hard-edge}
Assume that \(\partial\Omega\) is a real-analytic Jordan curve, 
compactly contained in \(\operatorname{Int} (S)\) and let \(z_0\in\partial\Omega\).
Then there exists a cocycle \(c(\zeta,\eta)\) such that, locally uniformly for 
\(\zeta,\eta\in\C\), 
\[
\lim_{n\to\infty}\frac{1}{\rho(z_0)^2n^2}\hspace{1pt} 
K_{n}^\Omega\left(z_0+ \frac{e^{i\theta}\zeta}{\rho(z_0)n},
z_0+ \frac{e^{i\theta}\eta}{\rho(z_0)n}\right)c(\zeta,\eta)
=B(\zeta,\eta),
\]
where \(B\) is the Hermitian-entire kernel
\[
B(\zeta,\eta)=\int_0^1 te^{-t(\zeta+\bar{\eta})}dt.
\]
\end{mainthm}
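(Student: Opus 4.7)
The plan is to identify the hard-edge kernel $B$ with the reproducing kernel of a concretely described Hilbert space $\mathcal{H}_B$ of entire functions, and then to show that every subsequential scaling limit of the rescaled correlation kernel coincides with $B$. Writing
\[
B(\zeta,\eta) = \int_0^1 e^{-t\zeta}\hspace{1pt}\overline{e^{-t\eta}}\hspace{1pt} t\,dt
\]
exhibits $\mathcal{H}_B$ as the image of $L^2((0,1), t\,dt)$ under the Laplace transform $g\mapsto\int_0^1 g(t) e^{-t\zeta}\,dt$, with the induced isometric norm. This is the Paley--Wiener type spectral embedding promised by the abstract, and serves as the concrete model for the target kernel.

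The first task is to extract subsequential limits of the rescaled kernel
\[
k_n(\zeta,\eta) = \frac{1}{\rho(z_0)^2 n^2}\hspace{1pt} K_n^\Omega\bigl(z_n(\zeta), z_n(\eta)\bigr)\, c_n(\zeta,\eta),\qquad z_n(\zeta)=z_0+\frac{e^{i\theta}\zeta}{\rho(z_0)n},
\]
with a cocycle $c_n(\zeta,\eta)=g_n(\zeta)\overline{g_n(\eta)}$ chosen to cancel the oscillatory part of $e^{-\frac{n}{2}(Q(z_n(\zeta))+Q(z_n(\eta)))}$ coming from the linear Taylor term of $Q$ at $z_0$. Testing the reproducing property against explicit weighted polynomial substitutes for the building blocks $e^{-t\zeta}$ produces an a-priori upper bound $k_n(\zeta,\zeta)\le C\hspace{1pt} B(\zeta,\zeta)$ uniformly in $n$; combined with Hermitian positive-definiteness this yields normal-family compactness and extracts a Hermitian-entire, positive semi-definite subsequential limit $k(\zeta,\eta)$.

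It remains to show $k=B$. The inclusion $\mathcal{H}_k\subseteq\mathcal{H}_B$ is the softer direction: any $f\in\mathcal{H}_k$ is a norm-controlled limit of rescaled weighted polynomials of degree at most $n-1$, and the strict subharmonicity of $Q$ off the droplet together with the degree constraint forces growth of $f$ comparable with $B(\zeta,\zeta)^{1/2}$. A Phragm\'en--Lindel\"of / Laplace-inversion argument then realizes $f$ as a Laplace transform supported on $[0,1]$ and gives $\|f\|_B\le\|f\|_k$. The reverse inclusion $\mathcal{H}_B\subseteq\mathcal{H}_k$ is the main obstacle and the heart of the proof. My plan is constructive: for $f(\zeta)=\int_0^1 g(t)e^{-t\zeta}\,dt$ with $g\in C_c^\infty(0,1)$, discretize $g$ and build weighted polynomials as superpositions of blocks of the form $\varphi(z)^{k_j}\Phi_n(z)$, where $\varphi:\C\setminus\overline\Omega\to\C\setminus\overline{\D}$ is the exterior conformal map, the degrees $k_j$ satisfy $k_j/n\to t_j$, and $\Phi_n$ is a weighted polynomial peaking at $z_0$ whose existence leverages the real-analyticity of $Q$ and $\partial\Omega$ and the strict inequality $Q+2U^\sigma>F_\sigma$ outside $S$. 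Quasi-orthogonality of blocks with well-separated degrees in $L^2(\C\setminus\Omega, e^{-nQ}dA)$, after rescaling and cocycle correction, reproduces precisely the isometry $\mathcal{H}_B\cong L^2((0,1),t\,dt)$.

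Together these inclusions are isometric, whence $k=B$; uniqueness of the subsequential limit then upgrades the convergence to the full sequence, locally uniformly. The main difficulty is the peak-polynomial construction just described: it must substitute for the Riemann--Hilbert input that drives the soft-edge treatment in \cite{HedenmalmWennmanActa}, and its error estimates must hold uniformly in the spectral parameter $t\in(0,1)$, since any loss of uniformity near either endpoint would collapse the isometry into a merely dense inclusion and leave $k=B$ unproven.
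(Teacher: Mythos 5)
Your overall architecture matches the paper's: extract subsequential limits of the rescaled kernel, prove $K\le B$ via a Paley--Wiener type embedding, and prove $K\ge B$ by constructing explicit weighted-polynomial trial kernels that saturate the extremal characterization of $K_n$. However, both steps of your plan contain gaps that are not merely technical.

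First, a small but instructive slip in the target space. If $T[g](\zeta)=\int_0^1 g(t)e^{-t\zeta}\,dt$ is declared an isometry from $L^2((0,1),t\,dt)$, then the reproducing kernel of the image is $\int_0^1 t^{-1}e^{-t(\zeta+\bar\eta)}\,dt$, not $B$. The correct statement (and what the paper proves in Theorem~\ref{thm:space-A}) is that the standard Laplace transform is an isometry from $L^2((0,1),\tfrac{dt}{t})$ onto $\mathcal{A}$; the extra power of $t$ in $B$ precisely compensates the weight $\tfrac{1}{t}$.

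Second, and more seriously, your source of the a priori bound is off. You propose that ``testing the reproducing property against explicit weighted polynomial substitutes'' produces $k_n(\zeta,\zeta)\le C\,B(\zeta,\zeta)$, but the reproducing property only yields \emph{lower} bounds on $K_n(z,z)$ (via $K_n(z,z)\ge |p(z)|^2/\|p\|^2$). The upper bound needed for normal-family compactness, and ultimately for the exterior decay estimate feeding the Paley--Wiener--Schwartz step, comes from a subharmonicity/Bernstein--Walsh estimate $K_n(z,z)\le Cn^2 e^{n(\check Q-Q)(z)}$ (Lemma~\ref{lem:apriori_hard} and Theorem~\ref{thm:limiting_hard}); this is independent of any polynomial construction and cannot be replaced by an argument that uses the very trial functions you still need to build.

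Third, the core of your lower bound is the construction of blocks $\varphi(z)^{k_j}\Phi_n(z)$ with $\varphi$ the exterior conformal map and $\Phi_n$ a peak polynomial at the boundary point $z_0$. This has two structural problems. (a) The conformal map carries the \emph{wrong} normal data: $\log|\varphi|$ vanishes on $\Gamma$ with normal derivative $|\varphi'|$, a purely geometric quantity, whereas the correct rate at which the trial wave functions must decay into $S$ is governed by the balayage density $\rho$, which depends on $\Delta Q$. These two densities are not proportional in general, and the mismatch cannot be absorbed by a reparametrization of $k_j$ because the discrepancy varies along $\Gamma$. The paper circumvents this by an inverse-balayage construction: a measure $\nu$ on a curve $\Lambda\subset\Omega$ whose Green's potential $U_\Omega^\lambda$ has normal derivative exactly $\rho$ on $\Gamma$ (Lemmas~\ref{lem:balayage-hard-edge} and~\ref{greens_potential}), discretized into a ``necklace of zeros'' along $\Lambda$. (b) Constructing a weighted polynomial $\Phi_n$ that genuinely peaks at a \emph{boundary} point $z_0$ is precisely the obstacle the paper flags and avoids: Tian-type peak sections rely on interior $\bar\partial$-localization, which degenerates at $\partial\Omega$. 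The paper's trial kernel $K_n^\sharp$ is not a single peak function modulated by powers; it is a superposition $\sum_j h_{j,n}\,e^{-j\mathcal{U}_{\lambda_j}(z)}e^{\frac{n}{2}\mathcal V(z)}$ of building blocks each \emph{spread along all of} $\Gamma$ with constant modulus on $\Gamma$, whose rapidly varying arguments sum constructively only near $z_0$, and destructively elsewhere (this is the content of Lemmas~\ref{lem:local_rescale} and~\ref{lem:off_diag_hard}). Until you have a concrete substitute for $\Phi_n$ that evades the boundary degeneracy of $\bar\partial$-methods --- or adopt the superposition-of-delocalized-blocks strategy --- the lower-bound half of your plan is not executable.
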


In other words, the hard-edge kernels that appeared in 
\cites{LubinskyBergman} for classical Bergman spaces are universal 
also for hard-edge scaling limits in the normal matrix model, i.e., 
for varying exponential weights. The locally uniform convergence of
the rescaled kernels shows that the corresponding {\em rescaled determinantal point processes}
converge in law to the unique determinantal point field governed by the kernel 
\(B(\zeta,\eta)\).
The fact that the kernel depends
only on \(\zeta+\bar{\eta}\) shows that the limiting
point process at the edge is translation-invariant in 
the vertical (tangential) direction. 

The kernel \(B\) is the reproducing kernel for a rather curious space 
\(\mathcal{A}\) of entire functions. It is the closed subspace of the Bergman 
space on the right half-plane consisting of entire functions subject to the
growth constraint
\[
|f(\zeta)|\le C e^{|\re \zeta|},\qquad \re(\zeta)\le 0.
\]
The Bergman space on the right half-plane coincides with the image of
\(L^2(\R_+,\frac{dt}{t})\) under the Laplace transform \(\mathcal{L}\). 
In fact, \(\mathcal{L}\) is an isometric isomorphism between the two spaces. 
The subspace \(\mathcal{A}\) instead consists of all functions 
\(\mathcal{L}[F]\) with \(F\in L^2([0,1],\frac{dt}{t})\). 

The direct framework allows us to analyze a wide range of boundary 
confinements in a unified way. We also prove the following strengthening 
of the universality result in \cite{HedenmalmWennmanActa}
which allows for droplets with several components.

\begin{mainthm}[Soft edges]\label{thm:main-soft-edge}
Assume that \(\partial S\) consists of finitely many real-analytic Jordan 
curves, and let \(z_0\in\partial S\) with outer unit normal \(e^{i\theta}\). 
Then there exist cocycles \(c_n(\zeta,\eta)\) such that, locally uniformly 
for \(\zeta,\eta\in\C\), we have
\[
\lim_{n\to\infty}\frac{1}{n\Delta Q(z_0)}\hspace{1pt} 
K_{n}\left(z_0+ \frac{e^{i\theta}\zeta}{\sqrt{n\Delta Q(z_0)}},
z_0+ \frac{e^{i\theta}\eta}{\sqrt{n\Delta Q(z_0)}}\right)c_n(\zeta,\eta)
=H(\zeta,\eta)
\]
where \(H\) is the Faddeeva kernel \(\Phi(\zeta+\bar{\eta})\, G(\zeta,\eta)\), where
\begin{equation}
\label{eq:free-bdry-fcn}
\Phi(z)=\frac{1}{2}\mathrm{erfc}\Big(\frac{z}{\sqrt{2}}\Big)
=\frac{1}{\sqrt{2\pi}}\int_{z}^{\infty} e^{-t^2/2}dt.
\end{equation}
\end{mainthm}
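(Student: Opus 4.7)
The plan is to establish reproducing-kernel Hilbert-space (RKHS) convergence: after an appropriate rescaling and cocycle gauge, the rescaled kernel is the reproducing kernel of a space of entire functions whose weighted norm converges to an explicit limit, and I identify the limit space as the one whose reproducing kernel is the Faddeeva kernel $H$.

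I first set up the rescaling. With $z(\zeta) = z_0 + e^{i\theta}\zeta/\sqrt{n\Delta Q(z_0)}$, define
\[
\widehat K_n(\zeta,\eta) = \frac{1}{n\Delta Q(z_0)}\, K_n(z(\zeta), z(\eta))\, c_n(\zeta,\eta),
\]
where $c_n = e^{A_n(\zeta) + \overline{A_n(\eta)}}$ is a Hermitian-analytic cocycle extracting the holomorphic and antiholomorphic Taylor terms of $nQ$ at $z_0$ through degree two. With this gauge, $\widehat K_n$ is the reproducing kernel for the rescaled weighted polynomial space, equipped with the norm $\|\cdot\|_n^2 = \int_{\C}|\cdot|^2 e^{-\phi_n}\,dA$. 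The weight $\phi_n$ converges locally uniformly to $\phi_\infty(\zeta) = |\zeta|^2 + (\re\zeta)_+^2$, where the Gaussian piece reflects the non-analytic part of the Taylor expansion of $Q$ and the exterior penalty encodes the quadratic growth of the obstacle function $Q + 2U^\sigma - F_\sigma$ off $S$ near $z_0$.

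Next, I deduce compactness. Standard pointwise bounds for reproducing kernels of weighted polynomial spaces, together with the uniform energy bound, give $\widehat K_n(\zeta,\zeta) \le C$ on compacts. Hermitian-entirety and Montel's theorem then provide subsequential limits $K_\infty$, each of which is the reproducing kernel of a Hilbert space $\mathcal H_\infty$ of entire functions sitting inside the target Hilbert space $\mathcal H := \{f\text{ entire}:\int_{\C}|f|^2 e^{-\phi_\infty}\,dA < \infty\}$. A direct Paley--Wiener type computation---pulling out the factor $e^{-\zeta^2/2}$ and passing through the Fourier transform on the real line---identifies the reproducing kernel of $\mathcal H$ as precisely the Faddeeva kernel $H$ in the normalization of the statement; the inclusion $\mathcal H_\infty \subseteq \mathcal H$ is automatic from Fatou's lemma.

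The technical heart is the reverse inclusion $\mathcal H \subseteq \mathcal H_\infty$: for every entire $g \in \mathcal H$, I must construct polynomials $p_n$ of degree $\le n-1$ whose rescalings converge to $g$ in $\|\cdot\|_n$. These peak polynomials would be obtained by multiplying a local analytic approximant of $g$ near $z_0$ by a Szeg\H{o}-type factor $e^{nV_n}$, where $V_n$ agrees with $F_\sigma - Q$ on $\bar S$ and decays optimally off the droplet; strict subharmonicity of $Q$ near $S$ combined with strict positivity of the obstacle off $S$ guarantees both concentration of mass near $z_0$ and $\|p_n\|_n \to \|g\|_{\mathcal H}$. The main new obstacle, absent from the connected case of \cite{HedenmalmWennmanActa}, is that the single degree budget $n-1$ must be allocated across the components of $S$, with approximately $n\sigma(S_j)$ degrees devoted to each component $S_j$. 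I would handle this by assembling $V_n$ from component-wise Green's functions and balayage of the equilibrium measure on the other components, so that the peak polynomial has the correct global degree while concentrating microscopically only near $z_0$; the cross-component contributions must then be shown negligible at the scale $n^{-1/2}$. Verifying that this multi-component coordination is compatible with the peaking estimate is, in my view, the central difficulty.
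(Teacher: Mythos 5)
Your proposal is in the same broad spirit as the paper---identify the limit space and its reproducing kernel, then match the rescaled kernel to it via an explicit polynomial construction for the lower bound---but there are three concrete gaps, the second of which is the central obstruction the paper works hard to overcome.

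\textbf{Identification of the limit space.} You define $\mathcal H$ as the entire functions with $\int_\C |f|^2 e^{-\phi_\infty}\,dA<\infty$, $\phi_\infty=|\zeta|^2+(\re\zeta)_+^2$, and assert that its reproducing kernel is the Faddeeva kernel. This is not correct: the Faddeeva kernel $\mathbf H(\zeta,\eta)=\Phi(\zeta+\bar\eta)e^{\zeta\bar\eta}$ reproduces with respect to the \emph{Fock} inner product $\int f\bar g\, e^{-|\zeta|^2}dA$ on the subspace $\mathcal H_\C=\mathcal B(L^2(\R_-))$, not with respect to a modified planar weight. A quick sanity check at $\zeta=\eta=0$ shows the reproducing kernel of your $\mathcal H$ has diagonal value strictly larger than $\mathbf H(0,0)=1/2$, so the two kernels differ. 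Moreover, your $\mathcal H$ is strictly larger than $\mathcal H_\C$ as a set (e.g.\ $f\equiv 1$ lies in $\mathcal H$ but violates the pointwise bound $|f(\zeta)e^{\zeta^2/2}|\le C$ on $\re\zeta\ge 0$). What the a priori estimates actually produce is (i) a mass-one inequality with the pure Fock weight, coming from Fatou and the reproducing property, and (ii) a \emph{pointwise} exterior bound $|\mathbf K(\cdot,\eta)e^{\zeta^2/2}|\le C_\eta$ on $\re\zeta\ge 0$, coming from the obstacle function. Combining these via the Paley--Wiener--Schwartz theorem places $\mathbf K(\cdot,\eta)$ in the Bargmann image of $L^2(\R_-)$, i.e.\ in $\mathcal H_\C$ with the Fock norm. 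Neither of (i), (ii) is an $L^2$ statement with a modified weight, and conflating them loses the argument. (Note also that the degree budget $\deg\le n-1$ enters through a Bernstein--Walsh maximum-principle bound, not through an $L^2$ weight, so ``$\phi_n\to\phi_\infty$'' does not capture the polynomial constraint.)

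\textbf{The lower bound.} You propose to build peak polynomials by multiplying a local analytic approximant of $g$ by a single Szeg\H o-type factor $e^{nV_n}$ and then localizing. This is the classical interior peak-section construction, and it is precisely what fails at edge points: the $\bar\partial$-localization requires strict positivity of the curvature form, which degenerates along $\partial S$, so a single smooth peak factor cannot be made to concentrate at the right scale with the right mass. The paper circumvents this by a genuinely different construction: the trial kernel $K_n^\sharp(z,w)=\sum_{j=m_n}^{M_n} q_{j,n}(z)\overline{q_{j,n}(w)}\,e^{-\frac{n}{2}(Q(z)+Q(w))}$ is a sum of weighted polynomials that each peak along level curves near $\partial S$ and carry rapidly oscillating arguments; the oscillations cause cancellation for $z\neq w$ and yield the needed off-diagonal decay. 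These polynomials are produced by an inverse-balayage step that places a measure $\nu$ on a curve $\Lambda$ inside the obstacle region, followed by discretization of $\nu$ into a ``necklace'' of zeros. None of this is present in your sketch, and simply asserting that ``strict subharmonicity plus strict positivity of the obstacle guarantees concentration and $\|p_n\|_n\to\|g\|_{\mathcal H}$'' hides the entire difficulty.

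\textbf{Multi-component bookkeeping.} You correctly identify that the degree budget is shared across components, but frame this as a balayage/allocation issue. In the paper the relevant obstruction is a \emph{holonomy} issue: $e^{\frac{n}{2}\mathcal V}$ is multi-valued on a multiply connected exterior, and one needs auxiliary harmonic functions $\omega_n$ with fractional boundary values on the other components so that $e^{\Omega_n+\frac{n}{2}\mathcal V}$ becomes single-valued. Once this is done, the contribution of distant components to the norm is shown to be of lower order by a Green's function estimate. The ``cross-component balayage of the equilibrium measure'' you describe would not by itself resolve the single-valuedness problem, nor would it obviously give the negligibility estimate.

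Finally, note that the paper never needs the full reverse inclusion $\mathcal H\subseteq\mathcal H_\infty$: it suffices to beat the extremal characterization of $K_n(z,z)$ with a single family of trial kernels approximating $\mathbf H(\cdot,\eta)$, which is a considerably lighter task than approximating every $g\in\mathcal H$.
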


The holomorphic Faddeeva kernel \({\bf H}(\zeta,\eta)=\Phi(\zeta+\bar{\eta})\,e^{\zeta\bar{\eta}}\) 
is the reproducing kernel for a subspace \(\mathcal{H}=\mathcal{H}_{\C}\) of the Fock space
consisting of functions \(f(\zeta)\) subject to the growth bound
\[
|f(\zeta)e^{\zeta^2/2}|\le C,\qquad \re(\zeta)\ge 0,
\] 
see \cites{HaimiHedenmalm,RescalingWardAKM}. The Fock space is the isometric image of \(L^2(\R)\)
under the Bargmann transform \(\mathcal{B}\) and, in analogy with the hard-edge space, the subspace
\(\mathcal{H}\) consists of functions with restricted Fourier support. More
precisely we have that
\(\mathcal{H}=\mathcal{B}(L^2(\R_{-}, dt))\).

We return to the Coulomb gas conditioned on the event that 
no particle lies in \(\Omega\).
The case when \(\Omega\) is precisely aligned with the 
droplet, i.e., \(\Omega=S^c\), is a critical case which sits
between the soft and the hard-edge cases. We refer 
to this regime as the {\em soft/hard edge regime}. 
This appears to have been introduced by Jancovici \cite{Jancovici}, 
and several results on this model have
appeared recently, see \cites{AmeurHardEdge,AmeurCronvallHardWall,AKMW,Seo}. 
However, all results to date have either relied on radial symmetry 
or an a priori assumption of tangential translation-invariance of the limiting kernel. 

\begin{mainthm}[Soft/hard edges]\label{thm:main-soft-hard-edge}
Assume that \(\partial S\) consists of finitely many real-analytic Jordan curves,
let \(\Omega=S^c\), and fix \(z_0\in\partial S\). Then there exist cocycles 
\(c_n(\zeta,\eta)\) such that, locally uniformly for \(\zeta,\eta\in\C\), we have
\[
\lim_{n\to\infty}\frac{1}{n\Delta Q(z_0)}\hspace{1pt} K_{n}^\Omega
\left(z_0+ \frac{e^{i\theta}\zeta}{\sqrt{n\Delta Q(z_0)}},
z_0+ \frac{e^{i\theta}\eta}{\sqrt{n\Delta Q(z_0)}}\right)c_n(\zeta,\eta)
=H_{\mathbb{L}}(\zeta,\eta),
\]
where \(H_{\mathbb{L}}\) is the kernel \(\Psi(\zeta+\bar{\eta})\,G(\zeta,\eta)\), where
\begin{equation}
\label{eq:soft/hard-bdry-fcn}
\Psi(z)=\frac{1}{\sqrt{2\pi}}\int_{-\infty}^0 \frac{e^{-(t-z)^2/2}}{\Phi(t)}dt,
\end{equation}
and \(\Phi(t)\) is the free boundary function \eqref{eq:free-bdry-fcn}.
\end{mainthm}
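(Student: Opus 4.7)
The plan is to apply the unified direct framework behind Theorems~\ref{thm:main-hard-edge} and \ref{thm:main-soft-edge}: identify $H_{\mathbb{L}}$ as the reproducing kernel of a concrete Hilbert subspace $\mathcal{H}_{\mathbb{L}}$ of a Fock-type space on the left half-plane $\mathbb{L}=\{\re\zeta\le 0\}$, extract any subsequential locally uniform limit $K_*$ of the rescaled kernel as the reproducing kernel of some space $\mathcal{H}_*$, and then prove $\mathcal{H}_*=\mathcal{H}_{\mathbb{L}}$ via two matching spectral embeddings. Because the hard wall is aligned with $\partial S$, the interior density $\Delta Q(z_0)$ is nonzero and the relevant scale is the soft-edge scale $(n\Delta Q(z_0))^{-1/2}$, not the hard-edge scale $n^{-1}$ from Theorem~\ref{thm:main-hard-edge}. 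After a rotation so that $e^{i\theta}=1$, the blow-up of $S$ at $z_0$ is $\mathbb{L}$ up to $o(1)$ errors.

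I would set
\[
\tilde K_n(\zeta,\eta)=\frac{1}{n\Delta Q(z_0)}\,K_n^\Omega\bigl(z_n(\zeta),z_n(\eta)\bigr)\,c_n(\zeta,\eta),\qquad z_n(\zeta)=z_0+\frac{e^{i\theta}\zeta}{\sqrt{n\Delta Q(z_0)}},
\]
and choose $c_n$ to cancel the holomorphic part of the Taylor expansion of $nQ(z_n(\zeta))$ at the origin, so that $nQ(z_n(\zeta))=|\zeta|^2+o(1)$ on compacta after the cocycle correction. Combining the reproducing property of $K_n^\Omega$ in $L^2(S,dA)$ with sub-mean-value bounds for $|p|^2 e^{-nQ}$ on small discs centered at $z_0$ gives locally uniform pointwise control of $\tilde K_n$, so any subsequential limit $K_*(\zeta,\eta)$ exists, is Hermitian-entire and positive definite, and is the reproducing kernel of a Hilbert space $\mathcal{H}_*$ whose elements arise as $L^2(\mathbb{L},dA)$-limits of rescaled weighted polynomials. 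To obtain the spectral inclusion $\mathcal{H}_*\subseteq\mathcal{H}_{\mathbb{L}}$ I would observe that such limits are Gaussian $L^2$-decaying on $\mathbb{L}$ but on the forbidden side $\re\zeta>0$ only satisfy a polynomial Bernstein bound inherited from the degree constraint, since the inner product imposes no damping there. Through the Bargmann transform this characterization becomes membership in a weighted Fock subspace with Bargmann support contained in $\R_-$; the Bargmann-side weight that relaxes Gaussian decay to polynomial growth across the wall is precisely $\Phi(t)^{-2}$, and a direct computation shows that this subspace has reproducing kernel $H_{\mathbb{L}}$.

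The main technical obstacle is the reverse inclusion $\mathcal{H}_{\mathbb{L}}\subseteq\mathcal{H}_*$, which requires constructing peaking weighted polynomials: for a dense family of $f\in\mathcal{H}_{\mathbb{L}}$ one must exhibit polynomials $p_n$ with $\deg p_n\le n-1$ such that $p_n(z_n(\zeta))e^{-nQ(z_n(\zeta))/2}\to f(\zeta)$ in $L^2(\mathbb{L})$ while their $L^2(S,dA)$-mass outside any fixed neighborhood of $z_0$ remains negligible. In analogy with the peaking construction underlying Theorem~\ref{thm:main-hard-edge}, I would start from a Bargmann-type quasi-mode on $\mathbb{L}$ against the $\Phi^{-2}$ weight, multiply by a global analytic concentrator adapted to $Q$ on $S$, and correct the resulting quasi-polynomial to a genuine polynomial of the required degree by solving a weighted $\bar\partial$-problem with exponential weight $e^{-nQ}$ in a neighborhood of the droplet. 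The difficulty specific to the soft/hard regime is that the hard wall removes the exponential $L^2$ suppression on $S^c$, so one must control polynomial growth on the forbidden side while maintaining the target $L^2$ profile on the allowed side; this is precisely where the denominator $\Phi(t)$ in \eqref{eq:soft/hard-bdry-fcn} enters the estimates. Once both inclusions are established, uniqueness of reproducing kernels yields $K_*=H_{\mathbb{L}}$ along every subsequence, which upgrades to the claimed locally uniform convergence together with the existence of cocycles $c_n$.
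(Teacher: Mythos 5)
Your high-level plan matches the paper's unified direct approach: characterize $H_{\mathbb{L}}$ as the reproducing kernel of a Hilbert subspace, pass to a subsequential limit, and pin it down by complementary inequalities (spectral embedding for the upper bound, peaking trial polynomials corrected via a weighted $\bar\partial$-problem for the lower bound). But the proposal misidentifies two of the key technical facts in a way that would derail the argument if carried out literally.

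First, you claim that on the forbidden side $\re\zeta>0$ the limiting kernel only satisfies ``a polynomial Bernstein bound inherited from the degree constraint, since the inner product imposes no damping there.'' That is not the right mechanism. The hard wall removes the exponential suppression only from the $L^2$-\emph{norm} (which is now over $\mathbb{L}$); it does not remove the pointwise a priori bound $K_n(z,z)\le Cn\,e^{n(\check Q-Q)(z)}$, which comes from the weight $e^{-nQ}$ and holds in the soft/hard regime as well (this is precisely Theorem~\ref{thm:limiting_softhard}, via \cite{AmeurHardEdge}*{Lemma~2.4}). This gives the same Gaussian-type exterior decay $|\mathbf K_\eta(\zeta)e^{\zeta^2/2}|\le C_\eta$ for $\re\zeta\ge 0$ as in the soft-edge case, and that Gaussian bound — not a polynomial one — is exactly what the Paley--Wiener--Schwartz theorem needs in order to conclude that the Bargmann preimage is supported on $\R_-$. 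A merely polynomial bound would leave a strictly larger space than $\mathcal H_{\mathbb L}$ and the extremal argument would fail. Relatedly, the Bargmann-side measure is $\Phi(t)\,dt$, not $\Phi(t)^{-2}\,dt$: the content of Theorem~\ref{thm:hard-edge-erfc-space} is that $\mathcal B\colon L^2(\R_-,\Phi(t)\,dt)\to F^2(\mathbb L)$ is isometric, and the factor $\Phi(t)^{-1}$ appears in $\mathcal B^{-1}[H_{\mathbb L}(\cdot,\eta)]$, not in the weight.

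Second, the peaking construction is left too vague to substantiate the lower bound. The difficulty is that the naive free-boundary-type problem one would want to solve — prescribed growth and boundary matching with $Q$ — is overdetermined. The paper resolves this by an inverse-balayage/discretization device: push the relevant boundary measure inward to a measure $\nu$ on a real-analytic curve $\Lambda\subset\mathcal D$, discretize $\nu/\nu(\Lambda)$ by roots of unity so that the Green's potential is approximated with exponentially small error (Lemma~\ref{point_approx}), and thereby obtain globally single-valued functions $e^{-j\mathcal U_{\lambda_j}}$ with a ``necklace'' of zeros along $\Lambda$. The trial kernel is then a sum of such wave functions with weights $h_{j,n}$, and the \emph{only} change in the soft/hard case compared to the soft case is the extra factor $\Phi(-j/(\nu(\Lambda)\sqrt n))^{-1}$ in $h_{j,n}$ (see \eqref{eq:h_soft}), tuned so that the Riemann sum produces $H_{\mathbb L}$ on the diagonal and the $L^2(\mathbb L)$-mass matches. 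Without some surrogate for this discretization step, the assertion that you can correct a ``Bargmann-type quasi-mode'' to a genuine polynomial with exponentially small loss is not justified, because the correction function $u_w$ can only be shown small once the quasi-polynomial is already holomorphic across the droplet component and exponentially small near $\operatorname{supp}\bar\partial\chi$.
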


The limiting kernel is reproducing for a Hilbert space 
\(\mathcal{H}_{\mathbb{L}}\) of entire functions,
which coincides with the Faddeeva kernel space 
\(\mathcal{H}\subset F^2(\C)\) as sets but with the norm restricted to
the left half-plane \(\mathbb{L}\). 

\subsection{Outline of the main idea}

We illustrate the main new ideas in the soft-edge case.
The overall approach remains the same in the three cases, 
but the exact arguments differ between them.
Fix a point \(z_0\in\partial S\) and denote by \(e^{i\theta}\) 
the outward unit normal to \(S\). Put
\[
k_n(\zeta,\eta)=\frac{1}{n\Delta Q(z_0)}\hspace{1pt} 
K_n\left(z_0+\frac{e^{i\theta}\zeta}{\sqrt{n\Delta Q(z_0)}},
z_0+\frac{e^{i\theta}\eta}{\sqrt{n\Delta Q(z_0)}}\right).
\]
It was shown in \cite{RescalingWardAKM} that there 
exist cocycles \(\{c_n(\zeta,\eta)\}_n\) such that
\(\{c_n \cdot k_n\}_n\) is  a normal family. Any 
(subsequential) limit point has the structure
\[
{\bf K}(\zeta,\eta)\,e^{-\frac12(|\zeta|^2+|\eta|^2)}
\]
where \({\bf K}(\zeta,\eta)\) is a Hermitian-entire kernel 
function belonging to the Fock space \(F^2(\C)\) of square-integrable entire 
functions with respect to the standard Gaussian 
measure on \(\C\). 
What remains is to establish that \({\bf K}\) is the Faddeeva kernel
\[
{\bf H}(\zeta,\eta)=\Phi(\zeta+\bar{\eta})\, e^{\zeta\bar{\eta}}.
\]
Since Hermitian-entire kernels are determined by their diagonal restriction 
by polarization, it is enough to verify that \({\bf K}(\zeta,\zeta)={\bf H}(\zeta,\zeta)\),
which will be done by verifying the complementary inequalities 
\({\bf K}(\zeta,\zeta)\ge {\bf H}(\zeta,\zeta)\) and 
\({\bf K}(\zeta,\zeta)\le {\bf H}(\zeta,\zeta)\).

\bigskip

\noindent {\bf Step 1.} (Spectral embedding)\; A version of the 
Bernstein-Walsh inequality asserts that
\[
K_n(z,z)\le Cn e^{-n(Q-\check{Q})(z)},
\]
where \(\check{Q}(z)=-2U^{\sigma}(z)+F_\sigma\) for some constant \(F_\sigma\), 
chosen such that \(\check{Q}=Q\) on the droplet \(S\). This shows 
in particular that \(K_n(z,z)\) decays rapidly outside the droplet. 
By considering the local behavior of \(Q-\check{Q}\) near the 
boundary of \(S\), one gathers that 
\begin{equation}
\label{eq:pot-theory-bound}
|{\bf K}(\zeta,\eta)e^{\zeta^2/2}| \le C,\qquad \re(\zeta)\ge 0. 
\end{equation}
The key observation here is that \eqref{eq:pot-theory-bound} 
together with the Paley-Wiener-Schwartz theorem 
(Theorem~\ref{thm:PWS}) implies that the entire function 
\({\bf K}(\zeta,\eta)\,e^{\zeta^2/2}\) is the Fourier-Laplace 
transform of a distribution supported on the negative half-line. 

The Fock space is known to be the isometric image of 
\(L^2(\R,dt)\) under the Bargmann transform, defined by 
\[
\mathcal{B}[F](\zeta)=\frac{1}{(2\pi)^{1/4}}\int_{\mathbb{R}}e^{\zeta t}
e^{-\zeta^2/2}e^{-t^2/4}F(t)dt.
\]
Moreover, the Faddeeva kernel \({\bf H}(\zeta,\eta)\)
is the reproducing kernel for the subspace 
\[
\mathcal{H}=\mathcal{B}(L^2(\R_-,dt))
\] 
of \(F^2(\C)\), see \cite{HaimiHedenmalm}*{Proposition~5.4}.
An inspection reveals that the Bargmann transform is a Fourier-Laplace transform 
followed by multiplication by \(e^{-\zeta^2/2}\). 
We thus have two representations of \({\bf K}(\zeta,\eta)\,e^{\zeta^2/2}\) as 
Fourier transforms, and by Fourier inversion they have to agree. We may thus 
conclude that \(F\) is in fact supported on the negative half-line, 
and hence \({\bf K}(\cdot, \eta)\in\mathcal{H}\). This is what we refer to as 
the {\em spectral embedding}.

An argument combining the reproducing property of \(K_n(z,w)\) with 
Fatou's lemma shows that the limiting kernel \({\bf K}(\cdot, \eta)\) satisfies the 
norm-bound
\begin{equation}
\label{eq:upper-bd-L2-limKer}
\int_{\C}|{\bf K}(\zeta,\eta)|^2 e^{-|\zeta|^2}dA(\zeta)\le {\bf K}(\eta,\eta).
\end{equation}
Now, the reproducing kernel \({\bf H}\) of \(\mathcal{H}\) is {\em extremal}
in the sense that
\[
{\bf H}(\eta,\eta)\ge \frac{|f(\eta)|^2}{\int_{\C}|f(\zeta)|^2 
e^{-|\zeta|^2}dA(\zeta)},\qquad f\in\mathcal{H}.
\]
Applying this with \(f={\bf K}(\cdot,\eta)\in\mathcal{H}\) 
together with the norm-bound \eqref{eq:upper-bd-L2-limKer},
we obtain the desired inequality
\[
{\bf H}(\eta,\eta)\ge \frac{{\bf K}(\eta,\eta)^2}{\int_{\C}|{\bf K}(\zeta,\eta)|^2
e^{-|\zeta|^2}dA(\zeta)}\ge {\bf K}(\eta,\eta).
\]

Step 1 shares some similarities with the
approach of Levin and Lubinsky in \cite{LevinLubinsky2008Varying}.
However, the function theory is quite different without the
symmetries of the real line. The compactness argument and the bound 
\eqref{eq:upper-bd-L2-limKer} are taken from \cite{RescalingWardAKM} 
in the soft-edge case.

\bigskip

\noindent {\bf Step 2.} (Construction of boundary peak functions)\; The complementary 
lower bound will again be based on the extremal property of the reproducing kernel, 
but this time applied for finite \(n\). That is, we will use that
\begin{equation}
\label{eq:extremal-finite-n}
K_n(z,z)\ge \frac{|p(z)|^2 e^{-nQ(z)}}{\int_{\C}|p(w)|^2 e^{-nQ(w)}dA(w)}
\end{equation}
whenever \(p\) is a polynomial of degree at most \(n-1\). We will show how to construct good
{\em trial kernels} \(K_n^\sharp(\cdot,w)\) lying exponentially close to \(\mathcal{W}_n\),
satisfying the following properties:
\begin{enumerate}[leftmargin=.8cm]
\item[(i)] The trial kernel approximates the desired error 
function kernel \({\bf H}\): for \(\zeta\) fixed,
\[
\frac{1}{n\Delta Q(z_0)}K_n^\sharp\left(z_0+\frac{e^{i\theta}\zeta}{\sqrt{n\Delta Q(z_0)}},
z_0+\frac{e^{i\theta}\zeta}{\sqrt{n\Delta Q(z_0)}}\right)
={\bf H}(\zeta,\zeta)e^{-|\zeta|^2}+o(1).
\]
\item[(ii)] With \(w=z_0+e^{i\theta}\eta/\sqrt{n\Delta Q(z_0)}\) and \(\eta\) fixed, 
\[
\limsup_{n\to\infty} \frac{1}{n\Delta Q(z_0)}
\int_{\C}|K_n^\sharp (z,w)|^2 dA(w)\le {\bf H}(\zeta,\zeta)e^{-|\zeta|^2}.
\]
\end{enumerate}

\noindent Together with the extremal property of \(K_n\), these properties imply that
\begin{align}
\frac{1}{n\Delta Q(z_0)}\hspace{1pt} K_n(z,z)&\ge (1+o(1))\frac{{\bf H}(\zeta,\zeta)^2 
e^{-2|\zeta|^2}}{\frac{1}{n\Delta Q(z_0)}\int_{\C}|K_n^\sharp (z,w)|^2 dA(w)}
\\  &\ge (1+o(1)){\bf H}(\zeta,\zeta)e^{-|\zeta|^2}.
\end{align}
Combining this with the convergence of \(\frac{1}{n\Delta Q(z_0)}K_n(z,z)\) 
to \({\bf K}(\zeta,\zeta)e^{-|\zeta|^2}\)
the inequality
\[
{\bf K}(\zeta,\zeta) \ge {\bf H}(\zeta,\zeta)
\]
follows. 

\bigskip

We comment briefly on a problem that arises in the construction of \(K_n^\sharp(z,w)\), 
cf.\ Section~\ref{s:hard-edge-constr} and Section~\ref{s:soft-edge-constr} 
below. Property (ii) requires the kernels to be 
concentrated close to the point \(w\), making them {\em boundary analogues}
of Tian's classical peak sections, cf.\ \cite{Tian}. The construction of peak sections typically 
uses localization with \(\bar\partial\)-estimates, which break down at boundary points.
We instead construct the trial kernels as sums 
\[
K_n^\sharp(z,w)=\sum_{j=m_n}^{M_n}q_{j,n}(z)\overline{q_{j,n}(w)}e^{-\frac{n}{2}(Q(z)+Q(w))},
\]
where the individual wave functions \(q_{j,n}e^{-nQ(z)/2}\) peak along
certain curves close to the interface and have rapidly oscillating arguments.
The oscillations cause cancellations in the sum for \(z\ne w\), resulting in the off-diagonal 
decay of \(|K_n^\sharp(z,w)|^2\) needed for Property (ii). This type of cancellation was 
observed in \cite{AmeurCronvall} for the actual kernel \(K_n(z,w)\), where it was used to derive
off-diagonal asymptotics for \(z,w\) near \(\partial S\).

One recurring problem in our construction of the polynomials \(q_{k,n}\) is the 
following. We are given a domain \(\Omega\), either bounded or unbounded, whose 
boundary is a real-analytic Jordan curve in \(S\). For definiteness, let's consider 
the unbounded case. To carry out our construction, one would naively like to find a 
function \(u\) of prescribed logarithmic growth such that
\begin{equation}\label{eq:free-bdry}
\Delta u=0\;\text{ on }\; \Omega\qquad\text{ and }
\qquad (u-Q)=\nabla(u-H)=0 \;\text{ on }\;\partial\Omega,
\end{equation}
where \(H\) is a given real-analytic function. The function \(u\)
would play a role similar to the function \(\breve{Q}_\tau\) in the construction of
the \enquote{quasipolynomials} in \cite{HedenmalmWennmanActa}*{Section~4.1}.
However, \eqref{eq:free-bdry} is over-determined and can 
only be solved for very particular choices of 
\(\Omega\) and \(H\). (For instance, if \(H=Q\) 
and \(\Omega\) is the complement of a \(\tau\)-droplet
then we get that \(u=\breve{Q}_\tau\).)
To overcome this, we can give ourselves some additional freedom by instead considering
\[
\Delta u=\nu \quad\text{on }\Omega,\qquad (u-Q)=\nabla(u-H)=0\text{ on }\partial\Omega,
\]
where \(\nu\) is an absolutely continuous measure with smooth positive density on
a Jordan curve \(\Lambda\subset \Omega\). Choosing \(\Lambda\) and \(\nu\) appropriately,
the problem admits a solution  \(u\).
This subharmonic function is no longer the real part of 
a holomorphic function in \(\Omega\), but by discretizing
its Riesz mass we can still find a holomorphic 
function \(f\), having a \enquote{{\em necklace}} of tightly packed
zeros along \(\Lambda\),
such that \(|f|^2e^{-nQ}\sim e^{-n(Q-u)}\) with
acceptable error.

In the hard-edge case the construction is used to find weighted polynomials
which peak along \(\partial\Omega\) with a 
prescribed decay rate in the normal direction into \(S\). 
In the soft and soft/hard-edge cases, we use the same construction to find 
weighted polynomials which peak only along one of the components of \(\partial S\).

We stress that the polynomials used in the construction of \(K_n^\sharp(z,w)\)
have nothing to do with the orthogonal polynomials\footnote{A somewhat similar 
flexibility was used in \cites{BGZNW,HedenmalmWennmanCMP}.}, 
and in fact do not satisfy any orthogonality conditions.

\section{Hard edges}
\label{s:hard_wall}
\noindent Let $Q$ be a real-analytic potential with droplet $S$, 
subject to the standing assumptions of Section~\ref{s:main-results}. 
We take a simply connected domain \(\Omega\subset\widehat{\C}\) (bounded or unbounded) 
with real-analytic boundary $\Gamma = \partial \Omega$ compactly 
contained in $\operatorname{Int} (S)$. We consider the restricted 
Coulomb gas where the particles are forbidden from being in the domain $\Omega$. 
There are two cases: either \(\Omega\) is bounded and compactly contained in \(S\), 
or \(\Omega\) is unbounded and the complement \(S^c\) is contained
in \(\Omega\). We will give the proof of Theorem~\ref{thm:main-hard-edge} 
in the case when \(\Omega\) is bounded with \(\Omega\subset S\). The modifications 
needed for the unbounded case will hopefully be clear from the soft-edge discussion 
which deals with that setting explicitly.

The correlation kernel $K_n^{\Omega}(z,w)$ is given by
\[
K_n^{\Omega}(z,w) = \sum\limits_{j=0}^{n-1} P_{j,n}^\Omega(z)
\overline{P_{j,n}^\Omega(w)} e^{-\frac{n}{2}(Q(z)+Q(w))},
\]
where $\{P_{k,n}^\Omega\}_{k=0}^{n-1}$ are the orthonormal 
polynomials with respect to the weighted area 
measure $1_{\C\setminus \Omega}(z) e^{-nQ(z)}dA(z)$. 
In other words $K_n^{\Omega}$ is the reproducing kernel 
of the space $\mathcal{W}_n$ of weighted polynomials as 
a subspace of $L^2(\C\setminus \Omega,dA)$.
When no confusion should arise, we sometimes omit the 
superscript \(\Omega\) and write \(K_n(z,w)\) for the kernel.

The following result about the restricted equilibrium potential 
is well known from potential theory.
\begin{thm}
The equilibrium measure $\sigma_\Omega$ in potential $Q$ restricted to $\C\setminus \Omega$ 
is given by 
\begin{equation}\label{structure}
d\sigma_\Omega(z) = \Delta Q(z) 1_{S\setminus \Omega}(z) dA(z) + d\mu(z),
\end{equation}
where $\mu$ is the Balayage measure of $\Delta Q(z) 1_\Omega(z) dA(z)$ onto $\Gamma$. 
Moreover, $\mu$ is absolutely continuous with respect to 
arc-length measure on $\Gamma$ with strictly positive density $\rho$.
\end{thm}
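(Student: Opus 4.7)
The plan is to identify $\sigma_\Omega$ by exhibiting the right-hand side of \eqref{structure} as a probability measure on $\mathbb{C}\setminus\Omega$ which satisfies the Euler-Lagrange (Frostman) conditions for the constrained weighted energy problem, and then to read off the density from the classical representation of balayage via harmonic measure.

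First I would set $\tilde\sigma = \sigma|_{S\setminus\Omega} + \mathrm{Bal}(\sigma|_\Omega, \Gamma)$. Since balayage preserves total mass, $\tilde\sigma$ is a probability measure supported in $(S\setminus\Omega)\cup\Gamma\subset \mathbb{C}\setminus\Omega$. The defining property of balayage gives $U^{\mathrm{Bal}(\sigma|_\Omega,\Gamma)} = U^{\sigma|_\Omega}$ throughout $\mathbb{C}\setminus\Omega$, hence $U^{\tilde\sigma}=U^\sigma$ on $\mathbb{C}\setminus\Omega$. Combining with the standing Frostman conditions for the unconstrained $\sigma$ (namely $Q+2U^\sigma=F_\sigma$ on $S$ and $Q+2U^\sigma>F_\sigma$ off $S$), I get
\[
Q(z)+2U^{\tilde\sigma}(z)=F_\sigma \quad \text{on } S\setminus\Omega,\qquad Q(z)+2U^{\tilde\sigma}(z)>F_\sigma \quad \text{on } \mathbb{C}\setminus S,
\]
the latter region being contained in $\mathbb{C}\setminus\Omega$ because $\Omega\Subset\operatorname{Int}(S)$. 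The potential $U^{\tilde\sigma}$ is continuous on $\mathbb{C}$ (it is harmonic inside $\Omega$ and agrees with the continuous $U^\sigma$ off $\Omega$), so equality propagates to $\Gamma$ by continuity. These are precisely the Euler-Lagrange conditions characterizing the constrained equilibrium measure, so uniqueness yields $\sigma_\Omega=\tilde\sigma$.

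For the density statement, I would use that for $w\in\Omega$ the balayage of the unit mass $\delta_w$ to $\Gamma$ is the harmonic measure $\omega_\Omega(w,\cdot)$. Since $\Gamma$ is a real-analytic Jordan curve, $\omega_\Omega(w,\cdot)$ is absolutely continuous with respect to arc-length $ds$ with strictly positive Poisson-kernel density $P_\Omega(w,\cdot)$. Integrating against $\Delta Q\cdot 1_\Omega\, dA$ and applying Fubini gives
\[
d\mu(z) = \rho(z)\,ds(z),\qquad \rho(z) = \int_\Omega P_\Omega(w,z)\,\Delta Q(w)\,dA(w),\quad z\in\Gamma,
\]
and strict positivity of $\rho$ follows from $P_\Omega>0$ together with the standing assumption $\Delta Q>0$ on a neighborhood of $S\supset\Omega$.

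There is no real obstacle here; the argument is a textbook application of the balayage technique. The only point that requires a brief justification is the passage from equality of $Q+2U^{\tilde\sigma}$ and $F_\sigma$ on $S\setminus\Omega$ to equality on $\Gamma$, which I would handle by the continuity of $U^{\tilde\sigma}$ noted above. Everything else is either a direct application of the defining property of balayage or a standard fact about harmonic measure on a real-analytic boundary.
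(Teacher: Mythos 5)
Your proof is correct and takes essentially the same approach as the paper. The identification of $\sigma_\Omega$ via the balayage property and the Frostman conditions is the paper's argument phrased slightly differently (you restrict the verification to $\mathbb{C}\setminus\Omega$, which is all that is needed; the paper invokes \cite{SaffTotik}*{Theorem 3.3} explicitly), and your Poisson-kernel/harmonic-measure representation of the balayage density is the same as the paper's Green's-function normal-derivative formula, with strict positivity coming from the Hopf lemma in both formulations.
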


\begin{proof} Since $\sigma$ is the equilibrium measure on $\C$ in the potential $Q$ we have
\begin{align}
&2U^\sigma(z)+Q(z) = F_\sigma, \quad z\in \text{supp}\, \sigma,\\
&2U^\sigma(z) + Q(z)\geq F_\sigma, \quad z\in \C,
\end{align}
for a constant $F_\sigma$. From the definition of the Balayage measure we have
$$
U^{\sigma}(z) = U^{\sigma_\Omega}(z), \quad z\in \C\setminus \Omega,
$$
and 
$$
U^{\sigma}(z) \geq U^{\sigma_\Omega}(z), \quad z\in \C.
$$
We get that
\begin{align*}
2U^{\sigma_\Omega}(z) + Q(z) &=F_\sigma, \quad z\in \text{supp}\, \sigma\setminus \Omega,\\
2U^{\sigma_\Omega}(z) + Q(z) &\geq F_\sigma, \quad z\in \C,
\end{align*}
which by \cite{SaffTotik}*{Theorem 3.3} implies that $\sigma_\Omega$ is the restricted 
equilibrium measure in the potential $Q$ on $\C\setminus \Omega$.  
    
To prove that the density $\rho$ is strictly positive we use a representation of the 
Balayage measure in terms of the Green's function $g_\Omega(z,\zeta)$ on $\Omega$. 
The Green's function on $\Omega$ is the harmonic function in $\Omega\setminus \{\zeta\}$ 
such that $g_\Omega(z,\zeta) = 0$ for $z\in \partial \Omega$ and $g_\Omega(z,\zeta) 
= \log \frac{1}{|z-\zeta|} + \bigO(1)$ as $z\to \zeta$.

For a measure $\nu$ on $\Omega$ we can represent the Balayage measure $\widehat{\nu}$ 
of $\nu$ onto $\Gamma$ as
$$
\frac{d\widehat{\nu}}{ds}(z) = -\int \partial_{\boldsymbol{n}} g_\Omega(z,\zeta) \,d\nu(\zeta),
$$
where $\bf{n}$ is the outward unit normal of $\Omega$ and $ds$ denotes the 
arc-length measure on \(\Gamma\) 
normalized by \(\frac1{2\pi}\).

The curve $\Gamma$ is real-analytic so by the Hopf lemma the normal derivative of the 
Green's function is strictly negative on $\Gamma$. In our case $d\nu = 1_{\Omega} \Delta Q dA$. 
By assumption $Q$ is strictly subharmonic on $\Omega$ and we get that the density $\rho$ is also 
strictly positive.
\end{proof}

There is a constant $F_{\sigma_\Omega}$ such that the function
\[
\check{Q}(z) = F_{\sigma_\Omega} - 2U^{\sigma_\Omega}(z), 
\] 
is the largest subharmonic minorant of $Q$ on $\C\setminus \Omega$ with growth at most 
$\log |z|^2 + \bigO(1)$ as $|z|\to \infty$. The function $\check{Q}$ is harmonic in $\Omega$
and we write $V$ for the harmonic extension of $\check{Q}$ across $\Gamma$ and $\mathcal{V}$ 
for the analytic function with real part $V$ which can also be defined in a neighborhood of $\Omega$. 
The imaginary part is uniquely defined up to a constant, which will not play any role for us.

We can recover the density $\rho$ from the function $V$.
\begin{lem}
\label{eq:lem-normal-deriv-density} Let $\partial_{\bf{n}}$ denote the normal 
derivative in the outward direction of $\Omega$. We have that
$$
\partial_{\bf{n}} (V(z)-Q(z)) = -2\rho(z), \quad z\in \Gamma.
$$ 
\end{lem}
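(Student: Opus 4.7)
The plan is to exhibit $V - Q$ in $\overline{\Omega}$ as the unique solution of a Dirichlet--Poisson problem with zero boundary data on $\Gamma$, and then to differentiate the Green's function representation, recognizing the resulting integral as the balayage density $\rho$ computed in the previous theorem.

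The first step is to note that $V - Q \equiv 0$ on $\Gamma$. Indeed, since $\Gamma \subset \operatorname{Int}(S)$, the function $\check Q$ coincides with $Q$ in a one-sided neighborhood of $\Gamma$ in $S \setminus \Omega$, so $\check Q|_\Gamma = Q|_\Gamma$ by continuity of $\check Q$. On the other hand, $V$ is by construction an extension of $\check Q|_{\Omega}$ across $\Gamma$, so $V|_\Gamma = \check Q|_\Gamma = Q|_\Gamma$. Next, because $V$ is harmonic in $\Omega$, the function $h := V - Q$ solves
\[
\Delta h = -\Delta Q \quad \text{in } \Omega, \qquad h = 0 \quad \text{on } \Gamma.
\]
Therefore $h$ admits a Green's function representation
\[
h(w) = 2\int_\Omega g_\Omega(w,\zeta)\,\Delta Q(\zeta)\, dA(\zeta), \qquad w \in \Omega,
\]
where the constant $2$ reflects the paper's conventions $\Delta = \partial \bar\partial = \tfrac14 \Delta_{\mathrm{std}}$ and $dA = \tfrac{1}{\pi}\,dx\,dy$ together with the normalization $g_\Omega(z,\zeta) = \log\tfrac{1}{|z-\zeta|} + \mathcal{O}(1)$.

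The final step is to take the outward normal derivative at $z \in \Gamma$. Since $\partial_{\mathbf{n},z} g_\Omega(z,\zeta)$ is smooth in $\zeta$ away from $z$ and locally integrable near $z$ (it inherits an $|z-\zeta|^{-1}$ singularity from $\nabla_z \log\tfrac{1}{|z-\zeta|}$, which is integrable against area in two dimensions), differentiation under the integral sign is justified. Combined with the balayage representation from the previous theorem, with $d\nu = 1_\Omega \Delta Q\, dA$, this gives
\[
\partial_{\mathbf{n}} (V-Q)(z) = 2\int_\Omega \partial_{\mathbf{n},z} g_\Omega(z,\zeta)\,\Delta Q(\zeta)\,dA(\zeta) = -2\rho(z),
\]
which is the claim.

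I expect the only real subtlety to be book-keeping constants between $\Delta$, $\Delta_{\mathrm{std}}$, $dA$ and the normalized arc-length $ds = |dz|/(2\pi)$, together with a brief justification that differentiation passes through the Green's potential (which follows either from the local integrability of $\partial_{\mathbf{n},z} g_\Omega$ together with dominated convergence, or from the classical boundary regularity theory for Poisson potentials on analytic domains with smooth data). No new analytic input is required beyond what has already been recorded in the statement and proof of the preceding theorem.
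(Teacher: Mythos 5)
Your proof is correct, and it follows a genuinely different route from the one in the paper. The paper's proof is an algebraic shortcut: it invokes the Neumann-jump characterization $\rho = -(\partial_{\mathbf{n}_e}U^{\sigma_\Omega} + \partial_{\mathbf{n}_i}U^{\sigma_\Omega})$ for the density of a measure on $\Gamma$, and then converts the interior and exterior one-sided normal derivatives of $U^{\sigma_\Omega}$ into $-\tfrac12\partial_{\mathbf{n}_i}V$ and $-\tfrac12\partial_{\mathbf{n}_e}Q$ via the identities $V = F_{\sigma_\Omega} - 2U^{\sigma_\Omega}$ in $\Omega$ and $Q = \check Q = F_{\sigma_\Omega} - 2U^{\sigma_\Omega}$ in $S\setminus\Omega$, with no integral representation at all. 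You instead identify $V-Q$ as the unique solution of $\Delta h = -\Delta Q$ in $\Omega$, $h=0$ on $\Gamma$, write $h = 2\int_\Omega g_\Omega(\cdot,\zeta)\Delta Q(\zeta)\,dA(\zeta)$, and then differentiate and read off the result from the explicit balayage-density formula $\rho(z) = -\int \partial_{\mathbf{n}} g_\Omega(z,\zeta)\,d\nu(\zeta)$ recorded in the preceding theorem. Both arguments ultimately rest on the same potential-theoretic facts (the balayage formula is itself proved by a Neumann jump), but they organize the computation differently: your route is more explicit and makes the normalization constants visible and verifiable, while the paper's is shorter and avoids any integration or convergence question. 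Your sign and constant bookkeeping with $\Delta = \tfrac14\Delta_{\mathrm{std}}$, $dA = \tfrac1\pi\,dx\,dy$, $ds = |dz|/(2\pi)$ checks out, and the justification of differentiation under the integral (the outward normal derivative of $g_\Omega$ inherits an integrable $|z-\zeta|^{-1}$ singularity, and $\partial\Omega$ is real-analytic) is the same regularity input the paper implicitly uses when writing down the balayage-density formula, so no new analytic input is hidden in either version.
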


\begin{proof}
From basic potential theory it follows that
$$
\rho(z) = -\Big(\frac{\partial U^{\sigma_\Omega}}{\partial \bf{n}_e}(z) 
+  \frac{\partial U^{\sigma_\Omega}}{\partial \bf{n}_i}(z) \Big),
$$
where $\bf{n}_e$ and  $\bf{n}_i$ are the exterior and interior unit normals of $\Omega$, 
respectively. The lemma now follows from the identities
 \[
\frac{\partial U^{\sigma_\Omega}}{\partial \bf{n}_i}(z) 
= -\frac{1}{2}\frac{\partial V}{\partial \bf{n}_i}(z),
\quad \quad \frac{\partial U^{\sigma_\Omega}}{\partial \bf{n}_e}(z) 
= -\frac{1}{2}\frac{\partial Q}{\partial \bf{n}_e}(z). \qedhere
\]
\end{proof}

\subsection{Limiting kernels and the upper bound}
We need some properties of the correlation kernel $K_n(z,w)=K_n^\Omega(z,w)$.

\begin{lem}\label{lem:apriori_hard}
For $z\in \C$ we have
\begin{equation}\label{exp_bound_global}
K_n(z,z) \leq Cn^2 e^{n(\check{Q}(z)-Q(z))}.
\end{equation}
In particular, the kernel $K_n(z,z)$ satisfies 
$$
K_n(z,z)\leq C n^2, 
$$
for $z\in\C\setminus \Omega$.
\end{lem}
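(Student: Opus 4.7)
The plan is to use the extremal characterization
\[
K_n(z,z) = \sup\Bigl\{|p(z)|^2 e^{-nQ(z)}\,:\, p\in\C[z],\ \deg p\le n-1,\ \|p\|_{L^2(\C\setminus\Omega,\, e^{-nQ}dA)}\le 1\Bigr\},
\]
and to prove the uniform pointwise bound $F(z) := |p(z)|^2 e^{-n\check Q(z)} \le Cn^2$ for every normalized $p$ and every $z\in\C$. The first inequality of the lemma then follows on taking the supremum, and the ``in particular'' clause is immediate from $\check Q\le Q$ on $\C\setminus\Omega$.

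Since $\Delta\check Q = 2\sigma_\Omega$ is supported on $S\setminus\Omega$, the function $\check Q$ is harmonic on both $\Omega$ and $\C\setminus S$, and therefore $F$ is subharmonic on their union. The growth $\check Q(z)\sim 2\log|z|$ at infinity together with $\deg p\le n-1$ forces $F(z)\to 0$, so the maximum principle reduces the global estimate to a bound on $\overline{S\setminus\Omega}$. For $z$ at distance at least $c/n$ from $\Gamma$, the sub-mean value property of $|p|^2$ on $D(z,c/n)\subset\C\setminus\Omega$ together with $|Q(w)-Q(z)|\le C/n$ on this disk and the $L^2$-normalization yields $F(z)\le Cn^2$ directly. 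The remaining, delicate case is $z$ near $\Gamma$, which ultimately boils down to bounding $M := \sup_{\partial\Omega} F$.

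For this step, let $V$ denote the harmonic extension of $\check Q|_{\overline\Omega}$ across $\Gamma$ into a neighborhood $U$ of $\overline\Omega$, with holomorphic completion $\mathcal V$ on $U$, and introduce
\[
\tilde F(z) := \bigl|p(z)\,e^{-n\mathcal V(z)/2}\bigr|^2 = |p(z)|^2 e^{-nV(z)},
\]
which is subharmonic on $U$ as the squared modulus of a holomorphic function and coincides with $F$ on $\overline\Omega$. For $z\in\partial\Omega$ and $r = c/n$ with a fixed small $c$ so that $D(z,r)\subset U$ for $n$ large, the sub-mean value inequality gives
\[
\tilde F(z) \le \frac{1}{r^2}\int_{D(z,r)\cap\Omega}\tilde F\,dA + \frac{1}{r^2}\int_{D(z,r)\setminus\Omega}\tilde F\,dA.
\]
On $\overline\Omega$ one has $\tilde F = F\le M$ by the maximum principle, and real-analyticity of $\Gamma$ yields $\mathrm{Area}(D(z,r)\cap\Omega) = r^2/2 + O(r^3)$, so the first integral contributes at most $(1/2 + O(1/n))M$. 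Outside $\Omega$, Lemma~\ref{eq:lem-normal-deriv-density} together with $V=Q$ on $\Gamma$ gives $Q - V = 2\rho\cdot d(\cdot,\Gamma) + O(d^2)\ge 0$ near $\Gamma$, so $\tilde F\le C|p|^2 e^{-nQ}$ on $D(z,r)\setminus\Omega$ once $nr$ is bounded; the $L^2$-normalization of $p$ then bounds the second integral by $Cn^2$. Taking the supremum over $z\in\partial\Omega$ yields $M\le (1/2 + O(1/n))M + Cn^2$, so that $M\le C'n^2$ after absorption for $n$ large.

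The bound $F\le Cn^2$ then propagates to the narrow strip of $S\setminus\Omega$ within $c/n$ of $\Gamma$ by the same split-integral estimate for $\tilde F$ (using $F\le\tilde F$ there, which follows from $V\le Q$ near $\Gamma$) combined with $M\le C'n^2$, and to $\Omega$ itself by the maximum principle for $F$. The main obstacle is the self-referential nature of the estimate for $M$; the coefficient $1/2$, which is what permits absorption, reflects the geometric fact that a small disk centered on $\Gamma$ is split into two roughly equal halves, and this hinges on the $C^2$-smoothness (here, real-analyticity) of $\Gamma$.
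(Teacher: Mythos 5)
Your proposal is correct, but it takes a genuinely different route from the paper's. The paper avoids the self-referential absorption entirely by \emph{shifting the curve}: it introduces $\widetilde{\Gamma}\subset\C\setminus\Omega$ at distance $1/n$ from $\Gamma$, so that around any $z\in\widetilde{\Gamma}$ a \emph{full} disk of radius $\sim 1/n$ sits inside $\C\setminus\Omega$, and the ordinary sub-mean-value bound together with the normalization $\|p\|\le 1$ gives $K_n(z,z)\le Cn^2$ on $\widetilde{\Gamma}$ outright. This is then propagated across $\operatorname{Int}(\widetilde{\Gamma})$ by applying the maximum principle to the subharmonic function $\log K_n(z,z)+n\bigl(Q(z)-Q_h(z)\bigr)$, where $Q_h$ is the harmonic extension of $Q\vert_{\widetilde{\Gamma}}$; the only remaining task is to estimate the $O(1)$ contribution $n(Q-Q_h)$ caused by the $1/n$-shift. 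By contrast, you center the disk directly on $\Gamma$, absorb the ``bad'' half of the disk into the supremum $M=\sup_\Gamma F$, and then push inward. Both proofs are valid; the trade-off is that the shifted-curve device gives a clean linear argument (no bootstrap) at the cost of an auxiliary comparison $Q_h\approx\check Q$, while your version stays with $\check Q$ and its analytic continuation $V$ throughout and is more explicit about the role of the subharmonic majorant $\tilde F = |p\,e^{-n\mathcal V/2}|^2$, but relies essentially on the geometric $1/2$-coefficient in the half-disk split. Your reduction via the global maximum principle (harmonicity of $\check Q$ on $\Omega$ and $\C\setminus S$, and the decay $F\to 0$ at infinity forced by $\deg p\le n-1$ versus $\check Q\sim\log|z|^2$) is also a cleaner way to get the full statement for all $z\in\C$, which the paper's write-up treats rather tersely.
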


\begin{rem}
The lemma only gives a good estimate close to the domain $\Omega$. 
In fact for any interior point of $\C\setminus \Omega$ 
with ${\rm d}(z,\Omega)\ge \delta n^{-1/2}\log n$, $\delta>0$, we have the stronger estimate
$$
K_n(z,z) \leq C n,
$$
with possibly a different constant.
\end{rem}

\begin{proof}     
Take a curve $\widetilde{\Gamma}$ in $\C\setminus \Omega$ at distance $\frac{1}{n}$ 
from $\Gamma$. Around any point on $\widetilde{\Gamma}$ we can then take a disc of 
radius proportional to $\frac{1}{n}$ in $\C\setminus \Omega$. 
A standard subharmonicity estimate (cf.\ \cite{RescalingWardAKM}*{Lemma 3.1}) then 
gives that $K_n(z,z)\leq Cn^2$ on $\widetilde{\Gamma}$.

Replace $Q$ by $Q_h$, the harmonic function on the domain $\operatorname{Int}(\widetilde{\Gamma})$
interior to \(\widetilde{\Gamma}\) which equals $Q$ 
on $\widetilde{\Gamma}$. We then have that
$$
\log K_n(z,z)  + nQ(z) - nQ_h(z),
$$ 
is a subharmonic function bounded by $\log(Cn^2)$ on the curve $\widetilde{\Gamma}$. 
From the maximum principle we get that the same bound holds on $\textrm{Int}(\widetilde{\Gamma})$ 
and in particular on $\Gamma$. It only remains to estimate $n(Q-Q_h)$. Since the distance between 
the two curves is only $\frac{1}{n}$ and since \(Q-Q_h\) is smooth, this will only contribute with a constant.  
    
The second part of the theorem is a direct consequence of the definition of $\check{Q}$. 
\end{proof}
We consider the rescaled kernel
$$
k_n(\zeta,\eta) = \frac{1}{n^2\rho(z_0)^2}\hspace{1pt} K_n(z_n, w_n),
$$
where the points $z_n$ and $w_n$ are defined by
\begin{equation}
\label{eq:rescaling-points-hard-edge}
z_n = z_0 + \frac{e^{i\theta}\zeta}{n\rho(z_0)}, \quad w_n = z_0 + \frac{e^{i\theta}\eta}{n\rho(z_0)}.
\end{equation}
As in the statement of Theorem \ref{thm:main-hard-edge} we let $e^{i\theta}$ 
denote the outer unit normal to $\Omega$ at $z_0$.

Lemma \ref{lem:apriori_hard} contains all the information we need about $K_n(z,w)$ to
prove the following compactness result for the sequence of rescaled kernels. It gives 
direct hard-edge analogues of the {\em compactness}, 
{\em mass-one inequality} and {\em a priori estimates}
from \cite{RescalingWardAKM}.

\begin{thm}\label{thm:limiting_hard} There exists a cocycle \(c\) such that $(c\cdot k_n)_{n=0}^\infty$ 
is a normal family. Furthermore, any limit point $K(\zeta,\eta)$ is a Hermitian-entire function 
satisfying the \enquote{mass-one inequality}
$$
\int\limits_{\mathbb{H}} |K(\zeta,\eta)|^2 dA(\eta) \leq K(\zeta,\zeta)
$$
and
\begin{equation}\label{exp_bound_local}
|K(\zeta,\eta)| \leq C_\eta\, e^{|\re\, \zeta|}, \quad \re \zeta \leq 0.
\end{equation}
\end{thm}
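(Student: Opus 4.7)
\medskip
\noindent\textbf{Proof proposal.} The strategy is to couple the a priori pointwise bound from Lemma~\ref{lem:apriori_hard} with a local complexification of $Q$ to obtain a normal family, and then to deduce the mass-one inequality from the reproducing property of $K_n$ together with Fatou's lemma.

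\medskip
\noindent\textbf{Step 1 (cocycle and Hermitian-entire reformulation).} Since $Q$ is real-analytic near $z_0$, polarize it to a function $\mathbf{Q}(z,\bar w)$, holomorphic in $z$ and antiholomorphic in $w$ in a bidisc neighborhood of $(z_0,\bar z_0)$, with $\mathbf{Q}(z,\bar z) = Q(z)$. A standard construction (cf.\ \cite{RescalingWardAKM}) built from $\mathbf{Q}$ produces a cocycle $c_n(\zeta,\eta)$, holomorphic in $\zeta$, antiholomorphic in $\eta$, and with $c_n(\zeta,\zeta)\equiv 1$, such that $\mathbf{k}_n := c_n k_n$ equals a Hermitian-entire function times a real-valued factor which tends to $1$ locally uniformly. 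In the hard-edge scale $|z_n-z_0|=O(1/n)$, the Taylor remainder of the relevant exponent is $n\cdot O(1/n^2)=O(1/n)$, so both $|c_n|\to 1$ locally uniformly and the non-Hermitian-entire real correction flattens to $1$.

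\medskip
\noindent\textbf{Step 2 (uniform bound and normal family).} From Lemma~\ref{lem:apriori_hard},
\[
K_n(z,z) \le Cn^2 e^{n(\check Q - Q)(z)}.
\]
For $\re\zeta \ge 0$ the point $z_n$ lies on the $S$-side of $\partial\Omega$ where $\check Q = Q$, so $k_n(\zeta,\zeta)\le C$. For $\re\zeta \le 0$ the point $z_n$ lies in $\Omega$ where $\check Q = V$ is harmonic, and using $V-Q=0$ on $\partial\Omega$ together with Lemma~\ref{eq:lem-normal-deriv-density} ($\partial_{\mathbf{n}}(V-Q)(z_0)=-2\rho(z_0)$), a first-order expansion gives
\[
n(\check Q - Q)(z_n) = 2|\re\zeta| + O(1/n),
\]
uniformly for $\zeta$ in a compact set. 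Hence $k_n(\zeta,\zeta)\le Ce^{2(\re\zeta)^{-}}$, and Cauchy-Schwarz $|K_n(z,w)|^2\le K_n(z,z)K_n(w,w)$ upgrades this to
\[
|k_n(\zeta,\eta)|\le Ce^{(\re\zeta)^{-} + (\re\eta)^{-}}
\]
uniformly on compacta. Since the Hermitian-entire part of $c_n k_n$ obeys the same bound (up to the factor $|c_n|\to 1$), Montel's theorem yields normality and any subsequential limit $K(\zeta,\eta)$ is Hermitian-entire. The bound passes through to the limit, giving the exponential estimate $|K(\zeta,\eta)|\le C_\eta e^{|\re\zeta|}$ for $\re\zeta\le 0$.

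\medskip
\noindent\textbf{Step 3 (mass-one inequality).} The reproducing property of $K_n$ in $\mathcal{W}_n\subset L^2(\C\setminus\Omega,dA)$ gives
\[
\int_{\C\setminus\Omega} |K_n(z_n,w)|^2 \,dA(w) = K_n(z_n,z_n).
\]
Changing variables via $w = z_0 + e^{i\theta}\eta/(n\rho(z_0))$ (Jacobian $1/(n\rho(z_0))^2$) and dividing once more by $(n\rho(z_0))^2$ transforms this into
\[
\int_{\Omega_n^c} |k_n(\zeta,\eta)|^2\,dA(\eta) = k_n(\zeta,\zeta),
\]
where $\Omega_n^c=\{\eta:z_0+e^{i\theta}\eta/(n\rho(z_0))\notin\Omega\}$ exhausts the tangent half-plane $\mathbb{H}=\{\re\eta>0\}$ as $n\to\infty$. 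Using $|c_n|\to 1$ locally uniformly so that $|k_n|\to |K|$ locally uniformly, and noting that $1_{\Omega_n^c}\to 1_{\mathbb{H}}$ almost everywhere, Fatou's lemma yields
\[
\int_{\mathbb{H}} |K(\zeta,\eta)|^2\,dA(\eta) \le \liminf_{n\to\infty} \int_{\Omega_n^c}|k_n(\zeta,\eta)|^2\,dA(\eta) = K(\zeta,\zeta).
\]

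\medskip
\noindent\textbf{Main obstacle.} The technically most delicate point is the uniform validity of the expansion $n(\check Q - Q)(z_n) = 2|\re\zeta|+O(1/n)$ for $\zeta$ in a compact set inside $\Omega$; this requires controlling the remainder in the Taylor expansion of $V-Q$ across $\partial\Omega$ uniformly, and it is here that the real-analyticity of both $Q$ and $\partial\Omega$ (feeding into the strict positivity of $\rho$ via Hopf's lemma) is essential. The cocycle construction in Step 1 is routine in spirit but must be set up carefully to preserve Hermitian-entirety after the $1/n$ rescaling.
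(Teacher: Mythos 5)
Your proposal is essentially correct and follows the paper's own scheme: extract a cocycle to make $k_n$ asymptotically Hermitian-entire, apply the a priori bound of Lemma~\ref{lem:apriori_hard} plus Cauchy--Schwarz and Montel, and derive the mass-one inequality from the reproducing identity via change of variables and Fatou. The one genuine difference is in Step~1: you build the cocycle by polarizing $Q$ (following the soft-edge template), whereas the paper introduces the Hermitian-analytic auxiliary function $A_n(\zeta,\eta)=e^{\frac{n}{2}(\mathcal{V}(z_n)+\overline{\mathcal{V}(w_n)})}e^{-\frac{n}{2}(Q(z_n)+Q(w_n))}$ built from the holomorphic $\mathcal{V}$ with $\re\mathcal{V}=V$, the harmonic continuation of $\check Q$, and reads off the cocycle from the expansion of $\frac{n}{2}(\mathcal{V}(z_n)+\overline{\mathcal{V}(w_n)}-Q(z_n)-Q(w_n))$ using Lemma~\ref{eq:lem-normal-deriv-density}. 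At the hard-edge scale $1/n$ these produce the same cocycle (since $V=Q$ on $\Gamma$ gives $\partial_{\mathbf{t}}V=\partial_{\mathbf{t}}Q$, and Cauchy--Riemann gives $\partial_{\mathbf{n}}\im\mathcal{V}=-\partial_{\mathbf{t}}V$), and both have $O(1/n)$ second-order remainders, so the two routes agree; the paper's choice is simply better adapted because $A_n$ then cleanly factors out the $e^{-(\zeta+\bar\eta)}$ growth with $\Psi_n=k_n/A_n$ manifestly Hermitian-analytic. One slip you should correct: the cocycle cannot be ``holomorphic in $\zeta$, antiholomorphic in $\eta$, with $c_n(\zeta,\zeta)\equiv 1$'' --- a Hermitian-analytic function with constant diagonal is constant, which would make $c_n\equiv 1$. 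Cocycles here are of the form $c(\zeta,\eta)=g(\zeta)\overline{g(\eta)}$ with $|g|\equiv 1$, hence unimodular and \emph{not} Hermitian-analytic; the Hermitian-analytic piece is the complementary factor $P_n e^{-n\mathbf{Q}}$ (or, in the paper, $\Psi_n$). Steps~2 and~3 are sound and match the paper.
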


\begin{proof} 
Define the function $A_n(\zeta,\eta)$ by
$$
A_n(\zeta,\eta) = e^{\frac{n}{2}(\mathcal{V}(z_n)+\overline{\mathcal{V}(w_n)})} 
e^{-\frac{n}{2}(Q(z_n)+Q(w_n))},
$$  
where \(\mathcal{V}\) is the holomorphic function defined above whose real
part equals \(V\) on a neighborhood of \(\Omega\).
Recall from Lemma~\ref{eq:lem-normal-deriv-density} that we have
$$
\partial_{\boldsymbol{n}}(V-Q)(z_0) = -2\rho(z_0),
$$
and hence
$$
\frac{n}{2}(\mathcal{V}(z_n) + \overline{\mathcal{V}(w_n)} -Q(z_n)-Q(w_n) )
= - (\zeta + \bar{\eta}) + ig(\zeta,\eta)+\bigO\Big(\frac{\|(\zeta,\eta)\|^2}{n^2}\Big),$$
where $g$ is the cocycle given by
$$
g(\zeta,\eta) = \partial_{\mathbf{n}} \im (\mathcal{V}(z_0)) \frac{\re (\zeta-  \eta)}{2\rho(z_0)} 
+  \partial_{\mathbf{n}} Q(z_0) \frac{\im (\zeta- \eta)}{2\rho(z_0)}.
$$
It follows that, with $c(\zeta,\eta) = e^{-ig(\zeta,\eta)}$, we have
$$
c(\zeta,\eta)A_n(\zeta,\eta) \to e^{-(\zeta+\bar{\eta})}, 
$$
locally uniformly as $n\to \infty$. Define the Hermitian-analytic function
$$
\Psi_n(\zeta,\eta) = \frac{k_n(\zeta,\eta)}{A_n(\zeta,\eta)}.
$$
We want to prove that $\{\Psi_n\}$ is a normal family from which the first part of the theorem 
would follow. From the reproducing property of $K_n$ we have 
$|k_n(\zeta,\eta)|^2\leq k_n(\zeta,\zeta)k_n(\eta,\eta)$.  We obtain
$$
|\Psi_n(\zeta,\eta)|^2 \leq C\frac{ k_n(\zeta,\zeta)k_n(\eta,\eta)}{|e^{-(\zeta+ \bar{\eta})}|^2},
$$
for $n$ big enough. Using the bound \eqref{exp_bound_global} from Lemma \ref{lem:apriori_hard} we 
get that $\Psi_n$ is locally uniformly bounded and hence a normal family. This proves the first 
part of the theorem and we see that the cocycle is given by $c(\zeta,\eta) = e^{-ig(\zeta,\eta)}$.

In order to prove the properties of the limiting kernel we consider a subsequence $n_k$ along 
which $\Psi_{n_k}\to \Psi$ for some Hermitian-entire function $\Psi$. It follows that along 
the same subsequence $c(\zeta,\eta)\cdot k_n(\zeta,\eta)$ converges locally uniformly to $K(\zeta,\eta)$. 

The reproducing property of $K_n(z,w)$ gives that for any $z\in \C$
$$
\int\limits_{\C\setminus \Omega} |K_{n_k}(z,w)|^2 dA(w) = K_{n_k}(z,z).
$$
We choose $z=z_{n_k}$ and make the change of variables 
$w = z_0 +\frac{e^{i\theta}\eta}{n_k\rho(z_0)}$. We get
$$
\int\limits_{H_{n_k}} |k_{n_k}(\zeta,\eta)|^2 dA(\eta) 
\leq \frac{1}{(n_k\rho(z_0))^2} K_{n_k}(z_{n_k},z_{n_k}),
$$
where $H_{n_k}$ is the image of $\C\setminus \Omega$ under the rescaling. 
The indicator function of $H_{n_k}$ converges to the indicator function 
of $\mathbb{H}$, hence from Fatou's lemma we have
$$
\int\limits_{\mathbb{H}} |K(\zeta,\eta)|^2 dA(\eta) \leq K(\zeta,\zeta).
$$  

This proves the mass-one inequality. In order to prove the exponential growth 
bound on the limiting kernel we make a Taylor expansion for $\re \zeta <0$,
\[
n(\check{Q}-Q)(z_n) = -2\re \zeta + \bigO\Big(\frac{1}{n}\Big).  
\]
From \eqref{exp_bound_global} we then get
$$
\frac{1}{\rho(z_0)^2n^2}K_n(z_n,z_n) \leq C e^{-2\re \zeta} \big(1+o(1)\big),
\quad \re \zeta \leq 0,
$$
as \(n\to \infty\).
From Cauchy-Schwarz we see that any limiting kernel $K$ must satisfy 
$$
|K(\zeta,\eta)| \leq C e^{-\re \zeta} , \quad \re \zeta \leq 0,
$$
which completes the proof.
\end{proof}

Theorem \ref{thm:limiting_hard} leads us to consider the subspace $\mathcal{A}$ 
of the Bergman space on the right half-plane consisting of entire functions 
with the growth restriction
$$
|f(\zeta)|\leq C e^{|\re \zeta|}, \quad \re(\zeta)\leq 0.
$$ 
This space has a clear interpretation in terms of the Fourier transform, and 
for this we will need a version of the classical Paley-Wiener-Schwartz theorem.
For a distribution \(u\in \mathcal{D}'\) such that \(e^{t\im(\zeta)} u(t)\in\mathcal{S}'\), 
we define the Fourier-Laplace transform at \(\zeta\) to be
\[
\widehat{u}(\zeta) =\mathcal{F}[u](\zeta)= u_x(e^{-ix\zeta}).
\]
The following result is a special case of Theorems~7.4.2 and 7.4.3 in 
\cite{HormanderVol1}. 

\begin{thm}\label{thm:PWS} If $U$ is an entire function that for some $c\geq 0$ satisfies
$$
|U(t + i\eta)| \leq C e^{c\eta}, \quad \eta\geq0,
$$
then there exists a distribution $u$ such that $e^{t \eta} u(t) \in \mathcal{S}'$ 
for every $\eta\geq0$ and such that \(u\) has Fourier-Laplace transform $U(t + i\eta)$ 
for every $\eta \geq 0$. In addition, the distribution $u$ is supported on $(-\infty,c]$.
\end{thm}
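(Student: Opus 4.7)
The plan is to construct $u$ by Fourier inversion on the real line and then to identify $e^{t\eta}u$ with the inverse Fourier transform of $U(\cdot+i\eta)$ using the holomorphicity of $U$. For each $\eta\ge 0$ the hypothesis gives $U(\cdot+i\eta)\in L^\infty(\R)\subset\mathcal{S}'(\R)$, so I set
\[
v_\eta := \mathcal{F}^{-1}\bigl[U(\cdot+i\eta)\bigr]\in\mathcal{S}'(\R),
\]
and define $u := v_0$. The first step is to verify that $v_\eta=e^{t\eta}u$ in $\mathcal{S}'$ for every $\eta\ge 0$. For this, observe that by the Cauchy--Riemann equations $\partial_\eta U(t+i\eta)=i\,\partial_t U(t+i\eta)$; applying $\mathcal{F}^{-1}$ (valid at the distributional level by a standard regularization) and using $\mathcal{F}^{-1}[\partial_t g]=-it\,\mathcal{F}^{-1}[g]$ yields the distributional ODE $\partial_\eta v_\eta = t\,v_\eta$, whose unique solution with $v_0=u$ is $v_\eta=e^{t\eta}u$. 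This identity immediately gives that $e^{t\eta}u\in\mathcal{S}'$ for every $\eta\ge 0$ and that its Fourier transform is $U(\cdot+i\eta)$, which is the Fourier--Laplace transform of $u$ at $t+i\eta$.

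The core of the argument is then the support property $\mathrm{supp}(u)\subset(-\infty,c]$. Fix $\varepsilon>0$ and take any test function $\varphi\in C_c^\infty\bigl((c+\varepsilon,+\infty)\bigr)$; the goal is to show $\langle u,\varphi\rangle=0$. By Parseval,
\[
\langle u,\varphi\rangle = \frac{1}{2\pi}\int_{\R} U(t)\,\tilde\varphi(t)\,dt,\qquad \tilde\varphi(\zeta) := \int_{\R}\varphi(x)\,e^{ix\zeta}\,dx.
\]
Since $\varphi$ is compactly supported in $[c+\varepsilon,+\infty)$, the function $\tilde\varphi$ extends to an entire function, and repeated integration by parts together with $|e^{ix(t+i\eta)}|=e^{-x\eta}\le e^{-(c+\varepsilon)\eta}$ on $\mathrm{supp}\,\varphi$ yields
\[
|\tilde\varphi(t+i\eta)| \le C_N\,(1+|t|)^{-N}\,e^{-(c+\varepsilon)\eta}\qquad(\eta\ge 0,\ N\in\mathbb{N}).
\]
This rapid decay justifies the contour shift (Cauchy's theorem)
\[
\int_{\R} U(t)\,\tilde\varphi(t)\,dt = \int_{\R} U(t+i\eta)\,\tilde\varphi(t+i\eta)\,dt,
\]
and combining the hypothesis $|U(t+i\eta)|\le Ce^{c\eta}$ with the bound on $\tilde\varphi$ gives
\[
\Bigl|\int_{\R} U(t+i\eta)\,\tilde\varphi(t+i\eta)\,dt\Bigr| \le C'\,e^{-\varepsilon\eta}\longrightarrow 0
\]
as $\eta\to+\infty$. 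Hence $\langle u,\varphi\rangle=0$ for every such $\varphi$, so $\mathrm{supp}(u)\subset(-\infty,c]$.

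The main technical subtlety lies in the contour-shift argument: the precise balance between the allowed growth $e^{c\eta}$ of $U$ and the exponential decay $e^{-(c+\varepsilon)\eta}$ of $\tilde\varphi$ is exactly what pins down the right edge $c$ of the support, and any one-sided improvement of the hypothesis would translate into the corresponding improvement of the support bound. Justifying the distributional ODE that produces $v_\eta=e^{t\eta}u$ is routine by convolving $u$ with a Gaussian to make things smooth before differentiating; apart from this, everything reduces to classical Fourier bookkeeping, in line with the Hörmander reference cited above.
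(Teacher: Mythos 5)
The paper does not prove this theorem itself; it cites Theorems~7.4.2 and 7.4.3 of H\"ormander's book. Your blind proof is a correct, self-contained reproduction of the standard Paley--Wiener--Schwartz argument that the cited reference contains: define $u$ by Fourier inversion on the real line, identify $v_\eta$ with $e^{t\eta}u$ via holomorphicity, and then obtain the support bound by shifting the contour into the upper half-plane, pitting the hypothesis $|U(t+i\eta)|\le Ce^{c\eta}$ against the exponential decay $e^{-(c+\varepsilon)\eta}$ of $\tilde\varphi$. This is essentially the same route as the source the paper points to.

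One small point of rigor worth flagging: the distributional ODE $\partial_\eta v_\eta=t\,v_\eta$ is only immediately justified for $\eta>0$, since the hypothesis controls $U$ (and hence, via Cauchy estimates, $U'$) only on closed upper half-planes; at $\eta=0$ you have no a priori bound on $U'$. The fix is routine: integrate the ODE on $[\eta_0,\eta]$ to get $v_\eta=e^{t(\eta-\eta_0)}v_{\eta_0}$ for $0<\eta_0<\eta$, then let $\eta_0\downarrow 0$ using weak-$*$ continuity of $\eta_0\mapsto v_{\eta_0}$ (dominated convergence on the paired integral) together with uniform boundedness. Alternatively one can bypass the ODE entirely and verify $v_\eta=e^{t\eta}u$ directly by a second contour-shift: for $\psi\in C_c^\infty$, both pairings reduce to $\int U(\xi+i\eta')\,\mathcal{F}^{-1}[\psi](\xi+i(\eta'-\eta))\,d\xi$ at different heights $\eta'$, and Cauchy's theorem equates them. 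Either way the conclusion stands; your proposal captures the substance of the argument.
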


We now have the following result. Recall the definition of the kernel \(B\) from \eqref{eq:def-B}.

\begin{thm} 
\label{thm:space-A}
The function $B_\eta(\zeta) = B(\zeta,\eta)$ is the reproducing kernel of the the space $\mathcal{A}$.  
\end{thm}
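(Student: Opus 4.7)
The plan is to identify $\mathcal{A}$ concretely with a subspace of $L^2$ via the Laplace transform, and then compute the reproducing kernel by transporting evaluation functionals. Recall from the discussion preceding the theorem that
\[
\mathcal{L}[F](\zeta)=\int_0^\infty F(t)\,e^{-t\zeta}\,dt
\]
defines an isometric isomorphism from $L^2(\R_+,\tfrac{dt}{t})$ onto the Bergman space $A^2(\mathbb{H})$ of the right half-plane $\mathbb{H}$. (One way to see this is to apply Plancherel in the $\im\zeta$ variable for each fixed $\re\zeta=s>0$, and then integrate in $s$.) Under this isomorphism inner products are preserved, so to pin down the reproducing kernel of $\mathcal{A}\subset A^2(\mathbb{H})$ it suffices to identify the pre-image of $\mathcal{A}$ in $L^2(\R_+,\tfrac{dt}{t})$ and invoke the standard representation of evaluation functionals there.

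The central claim is that $\mathcal{A}=\mathcal{L}\bigl(L^2([0,1],\tfrac{dt}{t})\bigr)$. The inclusion $\supseteq$ is routine: if $F\in L^2([0,1],\tfrac{dt}{t})$, the truncated integral converges absolutely for every $\zeta\in\C$ and defines an entire function, and for $\re\zeta\le 0$ the Cauchy-Schwarz inequality against the measure $\tfrac{dt}{t}$ yields
\[
|\mathcal{L}[F](\zeta)|\le \|F\|_{L^2([0,1],\,dt/t)} \left(\int_0^1 t\,e^{2t|\re\zeta|}\,dt\right)^{1/2} \le C\,e^{|\re\zeta|}.
\]
For the reverse inclusion, take $f\in \mathcal{A}$; by the isometry, $f=\mathcal{L}[F]$ for a unique $F\in L^2(\R_+,\tfrac{dt}{t})$, extended by $0$ on $\R_-$. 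Matching conventions with the paper's Fourier-Laplace transform, set $U(\zeta')=f(-i\zeta')$; then $U=\widehat{F}$ is entire, and the half-plane bound on $f$ translates into $|U(t+i\eta)|\le C\,e^{\eta}$ for $\eta\ge 0$. Theorem~\ref{thm:PWS}, applied with $c=1$, then forces $F$, viewed as a distribution, to be supported in $(-\infty,1]$, and combined with the vanishing on $\R_-$ we obtain $\operatorname{supp} F\subset[0,1]$.

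With this identification in place, the reproducing kernel drops out. For $f=\mathcal{L}[F]\in\mathcal{A}$,
\[
f(\eta)=\int_0^1 F(t)\,e^{-t\eta}\,dt=\int_0^1 F(t)\,\overline{t\,e^{-t\bar\eta}}\,\frac{dt}{t},
\]
so evaluation at $\eta$ is represented in $L^2([0,1],\tfrac{dt}{t})$ by the function $G_\eta(t):=t\,e^{-t\bar\eta}$, which clearly lies in $L^2([0,1],\tfrac{dt}{t})$. Pushing back through $\mathcal{L}$ gives the reproducing kernel as
\[
\mathcal{L}[G_\eta](\zeta)=\int_0^1 t\,e^{-t(\zeta+\bar\eta)}\,dt=B(\zeta,\eta).
\]

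I expect the main obstacle to be the identification step, specifically the careful alignment between the Laplace transform $\mathcal{L}$ and the Fourier-Laplace transform of Theorem~\ref{thm:PWS}: verifying that an element of $L^2(\R_+,\tfrac{dt}{t})$ determines a tempered distribution to which PWS legitimately applies, tracking the correct constant $c=1$, and confirming that the rotation $\zeta\mapsto -i\zeta$ exchanges the relevant half-plane bounds correctly. Once these conventions are squared away, the other steps are essentially formal.
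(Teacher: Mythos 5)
Your argument follows the paper's route almost exactly: identify $\mathcal{A}$ with $\mathcal{L}\bigl(L^2([0,1],\tfrac{dt}{t})\bigr)$ via Paley--Wiener--Schwartz and the Laplace-transform isometry, then read off the reproducing kernel as the Laplace transform of the evaluation functional $t\mapsto t\,e^{-t\bar\eta}$. The only slip is a sign in the rotation: with the paper's convention $\widehat{u}(\zeta')=u_x(e^{-ix\zeta'})$, one should set $U(\zeta')=f(i\zeta')$, not $f(-i\zeta')$, in order for $U=\widehat{F}$ and for the bound to read $|U(t+i\eta)|\le Ce^{\eta}$ on $\eta\ge 0$ as you state; your subsequent steps are consistent with that corrected choice, so this is cosmetic.
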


\begin{proof}
Denote by $A^2(\mathbb{H})$ the Bergman space on the right half-plane, that is, the space of 
holomorphic functions in $\mathbb{H}$ with finite norm
$$
\|f\|^2_{A^2(\mathbb{H})} = \int\limits_{\mathbb{H}} |f(\zeta)|^2 dA(\zeta) <\infty.  
$$
The Laplace transform $\mathcal{L}$
$$
\mathcal{L}[F](\zeta) = \int\limits_{0}^{\infty} F(t) e^{-t\zeta} dt,
$$
is an isometric isomorphism $\mathcal{L}: L^2(\R_+, \frac{dt}{t}) \to A^2(\mathbb{H})$, 
cf.\ \cite{DGMR}. With the aid of Theorem~\ref{thm:PWS} applied in both the right and 
left half-planes, the growth restriction on the functions in $\mathcal{A}$ implies that
any \(f\in \mathcal{A}\) can be written as a Fourier-Laplace transform 
\[
f(\zeta)=u_x\left(e^{-x\zeta}\right)
\]
for some distribution \(u\)
supported on \([0,1]\) with \(e^{-\re(\zeta)t}u(t)\in \mathcal{S}'\) for 
\(\zeta\in\C\). Moreover, we have the representation \(f=\mathcal{L}[F]\) 
for some \(F\in L^2(\R_+,\frac{dt}{t})\), and we claim that \(F=u\) so that 
\(F\) is in fact supported on \([0,1]\). Indeed, since \(F\in L^2(\R_+,\frac{dt}{t})\)
it holds that \(e^{-t\re(\zeta)}F(t)\in \mathcal{S}'\) for \(\re(\zeta)> 0\) 
and hence the claim follows by Fourier inversion in \(\mathcal{S}'\). 

Since it is moreover clear that \(\mathcal{L}[F]\in \mathcal{A}\) for any 
\(F\in L^2([0,1],\frac{dt}{t})\), it follows that the Laplace transform is 
an isometric isomorphism from  $L^2([0,1],\frac{dt}{t})$ onto the space $\mathcal{A}$.

It remains to show that $B$ is the reproducing kernel 
of $\mathcal{A}= \mathcal{L}[L^2([0,1],\frac{dt}{t})]$. 
Let $L_\eta(\zeta)$ be the reproducing kernel of $\mathcal{A}$ and consider 
$F\in L^2([0,1], \frac{dt}{t})$. From the reproducing property of $L$ we get
$$
\big\langle \mathcal{L}[F] , B_\eta \big\rangle_{A^2(\mathbb{H})} 
= \big\langle F , \mathcal{L}^{-1} [B_\eta] \big\rangle_{L^2(\R,\frac{dt}{t})},
$$
from which we see that
$$
\int\limits_{0}^1 F(t)e^{-t\eta} dt = \int\limits_{0}^1 F(t) 
\overline{\mathcal{L}^{-1} [L_\eta](t)}\, \frac{dt}{t},
$$
for all $F\in L^2([0,1], \frac{dt}{t})$. We can conclude that 
$\mathcal{L}^{-1} [L_\eta](t) = 1_{[0,1]}(t)te^{-t\bar{\eta}}$. 
Applying the transform $\mathcal{L}$ to both sides gives
\[
L(\zeta,\eta) = \int\limits_{0}^1 t e^{-t(\zeta + \bar{\eta})} dt =B(\zeta,\eta). \qedhere
\] 
\end{proof}

From Theorem \ref{thm:limiting_hard} we get the following corollary.
\begin{cor}\label{cor:upper_bound_hard} Any limiting kernel $K_\eta(\zeta)$ as in 
Theorem \ref{thm:limiting_hard} belongs to the space $\mathcal{A}$ and satisfies the inequality
$$
K(\zeta,\zeta) \leq B(\zeta,\zeta).
$$
\end{cor}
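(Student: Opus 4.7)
The plan is to deduce the corollary directly from Theorem \ref{thm:limiting_hard} together with the identification of $B$ as the reproducing kernel of $\mathcal{A}$ from Theorem \ref{thm:space-A}. There are two steps: first show that $K_\eta(\zeta) = K(\zeta,\eta) \in \mathcal{A}$ for each fixed $\eta$, and then extract the diagonal bound by combining the reproducing property with Cauchy--Schwarz.

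For the first step, I would check the three requirements defining $\mathcal{A}$: entirety, membership in $A^2(\mathbb{H})$, and the exponential growth bound on the left half-plane. Entirety of $\zeta\mapsto K(\zeta,\eta)$ follows at once from the Hermitian-entirety of $K$. The growth bound on $\{\re\zeta\le 0\}$ is exactly the content of \eqref{exp_bound_local}. For the Bergman space condition, use Hermitian symmetry to rewrite
\[
\int_{\mathbb{H}} |K(\zeta,\eta)|^2\, dA(\zeta) = \int_{\mathbb{H}} |K(\eta,\zeta)|^2\, dA(\zeta)\le K(\eta,\eta),
\]
where the last inequality is the mass-one inequality of Theorem \ref{thm:limiting_hard} (applied with the roles of the arguments swapped). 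So $K_\eta\in\mathcal{A}$ with $\|K_\eta\|_{A^2(\mathbb{H})}^2 \le K(\eta,\eta)$.

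For the second step, the reproducing property of $B$ in $\mathcal{A}$ gives
\[
K(\eta,\eta) = K_\eta(\eta) = \langle K_\eta, B_\eta\rangle_{A^2(\mathbb{H})},
\]
and Cauchy--Schwarz then yields
\[
K(\eta,\eta)^2 \le \|K_\eta\|_{A^2(\mathbb{H})}^2\,\|B_\eta\|_{A^2(\mathbb{H})}^2 \le K(\eta,\eta)\cdot B(\eta,\eta).
\]
Dividing by $K(\eta,\eta)$ (trivially $K(\eta,\eta)\ge 0$ since $K$ is a positive kernel, and if it vanishes there is nothing to prove) delivers the inequality $K(\eta,\eta)\le B(\eta,\eta)$.

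There is no real obstacle: all the analytic work has been absorbed into Theorems \ref{thm:limiting_hard} and \ref{thm:space-A}, and the corollary is essentially a two-line consequence. The only minor subtlety worth flagging is the use of Hermitian symmetry to convert the mass-one estimate (which integrates in the second variable) into an $A^2(\mathbb{H})$-norm bound on the first-variable slice $K_\eta$, since $\mathcal{A}$ was defined as a subspace of the Bergman space in the first variable.
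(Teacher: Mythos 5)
Your proof is correct and follows essentially the same route as the paper: the paper phrases the final step as the extremal characterization $B(\eta,\eta)=\sup_{f\in\mathcal{A}\setminus\{0\}}|f(\eta)|^2/\|f\|_\mathcal{A}^2$ applied to $f=K_\eta$, which is exactly your reproducing-property-plus-Cauchy--Schwarz argument in slightly different packaging. Your explicit mention of the Hermitian-symmetry swap needed to read the mass-one inequality as a bound on $\|K_\eta\|_{A^2(\mathbb{H})}$ is a subtlety the paper leaves implicit, but it is the same underlying step.
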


\begin{proof} Since $B$ is the reproducing kernel for \(\mathcal{A}\) we have
\begin{equation}
\label{eq:extremal-B}
B(\zeta,\zeta) = \sup\limits_{f\in\mathcal{A}\setminus \{0\}} \frac{|f(\zeta)|^2}{\|f\|_\mathcal{A}^2}
\end{equation}
and since from Theorem \ref{thm:limiting_hard} we have 
$\|K_\zeta\|_\mathcal{A}^2 \leq K(\zeta,\zeta)$ we get the inequality
by applying the extremal property \eqref{eq:extremal-B} with \(f=K(\cdot,\zeta)\). 
\end{proof}

\subsection{Approximate kernel and the lower bound}
\label{s:hard-edge-constr}
\noindent The goal of this section is to construct polynomial trial kernels $K_n^{\sharp}(z,w)$ 
with good off-diagonal decay, and whose rescalings approximate \(B(\zeta,\eta)\) locally. 
In order to do so we first need to define some functions which will be used in the construction.

Recall that the constrained equilibrium measure \(\sigma_\Omega\) takes the form
\[
\sigma_\Omega=\sigma\vert_{\C\setminus \Omega} + \mu,
\]
where \(\mu=\mathrm{Bal}(\sigma\vert_{\Omega},\partial\Omega)\) is absolutely 
continuous with respect to arc-length measure on \(\partial\Omega\). 
Its density, denoted by \(\rho\), is real-analytic and strictly positive.
We begin with an inverse Balayage construction which moves the measure $\mu$ into $\Omega$. 
More precisely, we want to construct a measure $\nu$ on a real-analytic Jordan curve $\Lambda$ 
inside $\Omega$ such that $\widehat{\nu} = \mu$. We have the following theorem.

\begin{lem} 
\label{lem:balayage-hard-edge}
There exists a real-analytic Jordan curve $\Lambda$ inside $\Omega$ and a measure 
$\nu$ with a smooth density on $\Gamma$ such that $\widehat{\nu} = \mu$.  
\end{lem}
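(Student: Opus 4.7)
The goal is an inverse balayage: to construct a smooth (ideally positive) density on an interior real-analytic curve $\Lambda$ whose balayage onto $\Gamma$ recovers $\mu$. My plan is to conformally transfer the problem to the unit disk, where it becomes a convolution on $\mathbb{T}$ that can be inverted by Fourier analysis.

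First, I fix a conformal map $\phi:\Omega\to\mathbb{D}$. Since $\Gamma$ is a real-analytic Jordan curve, Schwarz reflection extends $\phi$ conformally to a neighborhood of $\overline{\Omega}$. Pushing $\mu$ forward under $\phi$ produces a measure on $\mathbb{T}$ with a real-analytic strictly positive density $\tilde\rho(\alpha)$, inherited from $\rho$ via the real-analytic extension of $\phi$. Conformal invariance of harmonic measure means balayage onto the boundary transports between $\Omega$ and $\mathbb{D}$, so the problem reduces to: find a circle $\tilde\Lambda_{r_0}=\{|w|=r_0\}\subset\mathbb{D}$ and a positive real-analytic density $r(\theta)$ on it whose balayage onto $\mathbb{T}$ equals $\tilde\rho(\alpha)\frac{d\alpha}{2\pi}$.

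In the disk, the balayage of a point mass at $r_0 e^{i\theta}$ is $\frac{d\alpha}{2\pi}P_{r_0}(\alpha-\theta)$, with $P_{r_0}$ the Poisson kernel. The balayage equation for $r$ is therefore a convolution on $\mathbb{T}$, which diagonalizes in Fourier series via the identity $P_{r_0}(\theta)=\sum_{k\in\mathbb{Z}} r_0^{|k|}e^{ik\theta}$. Writing $\tilde\rho(\alpha)=\sum_k \hat{\tilde\rho}_k\, e^{ik\alpha}$ and $r(\theta)=\sum_k \hat r_k\, e^{ik\theta}$, the equation becomes
\[
\hat r_k = r_0^{-|k|}\,\hat{\tilde\rho}_k.
\]
Since $\tilde\rho$ is real-analytic, its Fourier coefficients satisfy $|\hat{\tilde\rho}_k|\le C e^{-a|k|}$ for some $a>0$. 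Hence for any $r_0\in(e^{-a},1)$, the coefficients $\hat r_k$ decay exponentially and $r$ is a well-defined real-analytic function. Moreover, as $r_0\to 1^-$, $r\to \tilde\rho$ uniformly; since $\tilde\rho>0$, the density $r$ is strictly positive for $r_0$ close enough to $1$.

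Transporting $r$ back to $\Lambda:=\phi^{-1}(\tilde\Lambda_{r_0})$ yields the desired measure $\nu$ on a real-analytic Jordan curve inside $\Omega$, and conformal invariance gives $\hat\nu=\mu$. The main obstacle is the Fourier-inversion step, where the amplification factor $r_0^{-|k|}$ must be dominated by the decay of $\hat{\tilde\rho}_k$. This is precisely where the real-analyticity of both $\Gamma$ and $\rho$ enters crucially; without it, inverting the Poisson convolution would only give a distribution, and the construction of a smooth positive density on an interior curve would fail.
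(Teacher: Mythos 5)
Your proof is correct but takes a genuinely different route from the paper's. The paper works intrinsically in $\Omega$: using real-analyticity of $\Gamma$ and $\rho$, it extends $U^{\mu}$ harmonically across $\Gamma$ to a function $H$, chooses any real-analytic Jordan curve $\Lambda$ close to $\Gamma$, glues $H$ on $\operatorname{Ext}\Lambda$ with the Poisson extension $P_\Lambda[H]$ on $\operatorname{Int}\Lambda$, and reads off $\nu$ as the Neumann jump across $\Lambda$; positivity follows by continuity from the strictly positive Neumann jump of $U^{\mu}$ across $\Gamma$. Your argument instead conformally transfers the problem to $\mathbb{D}$ (using that the Riemann map extends analytically across $\Gamma$ by Schwarz reflection and that balayage transports by conformal invariance of harmonic measure), where balayage from the circle $\{|w|=r_0\}$ to $\mathbb{T}$ is Poisson-kernel convolution, and you deconvolve explicitly in Fourier series: the exponential decay $|\hat{\tilde\rho}_k|\le Ce^{-a|k|}$ furnished by analyticity lets you choose $r_0\in(e^{-a},1)$ so that $\hat r_k = r_0^{-|k|}\hat{\tilde\rho}_k$ defines a real-analytic function, and positivity follows from the uniform convergence $r\to\tilde\rho>0$ as $r_0\to 1^-$. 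Both constructions rely on the same analyticity input in a crucial way (the paper via Cauchy--Kovalevskaya-type harmonic continuation, you via Fourier decay), but the paper's construction gives a free choice of interior curve $\Lambda$, whereas yours fixes $\Lambda$ to be a conformal level curve and in return yields an explicit closed-form inverse balayage and makes the quantitative role of analyticity transparent.
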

\begin{proof}
The potential $U^{\mu}$ is harmonic in the exterior of $\Gamma$ and continuous up to the boundary. 
We can extend it to a harmonic function $H$ in a neighborhood of $\textrm{Ext}(\Gamma)$. Now take a 
real-analytic Jordan curve $\Lambda$ close enough to $\Gamma$ so that $H$ is defined there. 
Define a new function
$$
\widetilde{H}(z) = 
\begin{cases} H(z), &\quad z\in \textrm{Ext } \Lambda,\\
P_\Lambda[H](z), &\quad z\in \textrm{Int }\Lambda,
\end{cases}
$$
where $P_{\Lambda}[H]$ denotes the harmonic function with boundary values $H$ on $\Lambda$.

Define a measure $\nu$ on $\Lambda$ by 
$$
d\nu(z) =  -\Big[\frac{\partial \widetilde{H}}{\partial \bf{n}_e}(z) 
+ \frac{\partial \widetilde{H}}{\partial \bf{n}_i}(z)\Big] ds(z).
$$
By taking \(\Lambda\) close enough to \(\Gamma\) we can ensure that \(\nu\) is a positive measure. 
Indeed, if we were to take \(\Lambda=\Gamma\) then we would have \(\widetilde{H}=U^\mu\), which 
has a strictly positive Neumann jump \(-(\partial_{\bf{n}_e}+\partial_{\bf{n}_i})U^\mu\) 
across \(\Gamma\). If we move \(\Lambda\) slightly 
inside \(\Omega\), the Neumann jump of \(\widetilde{H}\) clearly remains positive.
The potential $U^{\nu}=\widetilde{H}$ will be equal to $H$ on the exterior of $\Lambda$ and 
hence equal to $U^{\mu}$ on the exterior of $\Gamma$. We get that $\widehat{\nu} = \mu$. 
\end{proof}

The function 
$$
U_{\Omega}^{\nu}(z) = \int\limits_{\Lambda} g_\Omega(z,\zeta) d\nu(\zeta),
$$
is the so called Green's potential in $\Omega$ of the measure $\nu$.

\begin{lem}\label{greens_potential} The function $U^{\nu}_{\Omega}$ is zero on $\Gamma$ and 
$\partial_{\boldsymbol{n}_i} U^{\nu}_{\Omega}(z) = \rho(z)$, for $z\in \Gamma$.
\end{lem}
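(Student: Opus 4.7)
The plan is to verify the two claims in turn, with part (1) being immediate and part (2) following from the balayage representation already quoted in the proof of the preceding theorem.

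For the first assertion, observe that by the defining boundary condition of the Green's function we have $g_\Omega(z,\zeta)=0$ whenever $z\in\Gamma$ and $\zeta\in\Omega$. Since $\Lambda\subset\Omega$, the integrand defining $U^\nu_\Omega(z)$ vanishes identically in $\zeta$ for $z\in\Gamma$, hence $U^\nu_\Omega\equiv 0$ on $\Gamma$.

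For the second assertion, the key remark is that for $z$ in a neighborhood of $\Gamma$ and $\zeta\in\Lambda$, the function $(z,\zeta)\mapsto g_\Omega(z,\zeta)$ is jointly smooth, because $\Lambda$ is compactly contained in $\Omega$ so the logarithmic singularity of $g_\Omega$ is kept away from $\Gamma$. This legitimizes differentiation under the integral sign:
\[
\partial_{\boldsymbol{n}_i}U^\nu_\Omega(z)=\int_\Lambda \partial_{\boldsymbol{n}_i,z}\,g_\Omega(z,\zeta)\,d\nu(\zeta),\qquad z\in\Gamma.
\]
Since $\boldsymbol{n}_i=-\boldsymbol{n}_e$, this equals $-\int_\Lambda \partial_{\boldsymbol{n}_e,z}\,g_\Omega(z,\zeta)\,d\nu(\zeta)$. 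Now invoke the balayage representation recalled in the proof of the preceding theorem: for any measure supported in $\Omega$, the balayage onto $\Gamma$ has density
\[
\frac{d\widehat{\nu}}{ds}(z)=-\int \partial_{\boldsymbol{n}}\,g_\Omega(z,\zeta)\,d\nu(\zeta),
\]
where $\boldsymbol{n}$ is the outward normal of $\Omega$, i.e.\ $\boldsymbol{n}=\boldsymbol{n}_e$. Combining these two identities yields $\partial_{\boldsymbol{n}_i}U^\nu_\Omega(z)=\tfrac{d\widehat{\nu}}{ds}(z)$. Finally, by construction $\widehat{\nu}=\mu$ (Lemma~\ref{lem:balayage-hard-edge}), and $\mu$ has density $\rho$ with respect to the normalized arc-length $ds$, so $\partial_{\boldsymbol{n}_i}U^\nu_\Omega(z)=\rho(z)$ as required.

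I do not anticipate a genuine obstacle here: the only potentially delicate point is justifying the interchange of $\partial_{\boldsymbol{n}_i}$ and $\int_\Lambda$, which is handled by the positive distance $\mathrm{dist}(\Lambda,\Gamma)>0$ ensured by the construction of $\Lambda$ in Lemma~\ref{lem:balayage-hard-edge}. The normalization of $ds$ by $1/(2\pi)$ is consistent with the convention $\Delta=\partial\bar\partial$ used throughout the paper, so no additional constants appear.
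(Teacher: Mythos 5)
Your proof is correct, but it takes a different route from the paper's. You differentiate $U^\nu_\Omega(z)=\int_\Lambda g_\Omega(z,\zeta)\,d\nu(\zeta)$ under the integral sign (legitimate since $\operatorname{dist}(\Lambda,\Gamma)>0$ keeps the singularity away), flip $\partial_{\boldsymbol{n}_i}=-\partial_{\boldsymbol{n}_e}$ for the one-sided smooth function $g_\Omega(\cdot,\zeta)$ at $\Gamma$, and then quote the Green's-function representation of the balayage density stated in the proof of the preceding theorem, closing with $\widehat\nu=\mu$. The paper instead uses the decomposition $U^\nu_\Omega=U^\nu-U^{\widehat\nu}$ and the Neumann jump relations for logarithmic potentials across $\Gamma$: since $U^\nu=U^{\widehat\nu}$ on $\operatorname{Ext}\Gamma$ one may freely add exterior normal derivatives, after which $U^\nu$ (with $\nu$ supported on $\Lambda$ away from $\Gamma$) has vanishing Neumann jump while $U^\mu$ has jump $-\rho$. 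Both arguments ultimately reduce to the same underlying jump-relation fact, but your version leans on the already-quoted balayage-density formula and thus avoids rewriting the decomposition, whereas the paper's version is self-contained and does not re-invoke that formula. Either one is acceptable; yours is slightly shorter given that the balayage representation is already in hand.
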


\begin{proof} That $U^{\nu}_{\Omega}$ is zero on $\Gamma$ follows directly since 
$g_\Omega(z,\zeta)$ is zero for $z\in \Gamma$. To get the expression for the normal 
derivative we first write
$$
U_\Omega^{\nu}(z) = U^{\nu}(z) - U^{\widehat{\nu}}(z).
$$
We get that for $z\in \Gamma$
\begin{align*}
\partial_{\boldsymbol{n}_i} U_\Omega^{\nu}(z) &= \partial_{\boldsymbol{n}_i} U^{\nu}(z) 
- \partial_{\boldsymbol{n}_i} U^{\mu}(z)\\
& = \big(\partial_{\boldsymbol{n}_i} U^{\nu}(z) + \partial_{\boldsymbol{n}_e} U^{\nu}(z) \big) 
- \big(\partial_{\boldsymbol{n}_i} U^{\mu}(z) + \partial_{\boldsymbol{n}_e} U^{\mu}(z) \big)\\
&= \rho(z),
\end{align*}
where we have used that $U^{\nu}(z)=U^{\mu}(z)$ in the exterior of $\Gamma$.    
\end{proof}

We define the probability measure $\lambda = \frac{\nu}{\nu(\Lambda)}$. We want to 
approximate $j\cdot\lambda$ by $j$ point masses on $\Lambda$. More precisely, 
we want to approximate the function $U_\Omega^{\lambda}$ by a function of the form
$$
U_\Omega^{\lambda_j}(z) := \frac{1}{j}\sum\limits_{k=1}^j g_\Omega(z,\zeta_{k,j}),
$$
where $\{\zeta_{k,j}\}_{k=1}^j$ are points on the curve $\Lambda$.

\begin{lem}\label{point_approx} There exist points $\{\zeta_{k,j}\}_{k=1}^j$ on 
$\Lambda$ and a positive constant $c>0$ such that
$$
U_\Omega^\lambda(z)- U_\Omega^{\lambda_j}(z)= \bigO(e^{-cj}),
$$
as $j\to \infty$, uniformly for $z$ in a neighborhood of $\Gamma$. 
\end{lem}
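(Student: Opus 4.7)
The plan is to choose the nodes $\zeta_{k,j}$ so that they equidistribute the measure $\lambda$ along $\Lambda$, and then invoke the classical exponential convergence of the trapezoidal rule for real-analytic periodic integrands. Since $\Lambda$ is a real-analytic Jordan curve, it admits a real-analytic parametrization $\phi:\mathbb{T}\to\Lambda$ that extends holomorphically to a strip \(|\im s|<\rho_0\). The measure $\lambda$ has a strictly positive real-analytic density with respect to arc length (since $\nu$ is constructed in Lemma~\ref{lem:balayage-hard-edge} as a Neumann-jump of the real-analytic function $\widetilde{H}$ across the analytic curve $\Lambda$), so the equidistribution map $\tau:\mathbb{T}\to\mathbb{T}$ defined by $\tau(e^{2\pi is})=\lambda\bigl(\phi([1,e^{2\pi is}])\bigr)$ is a real-analytic diffeomorphism. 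Setting $\psi=\phi\circ \tau^{-1}$ and $\zeta_{k,j}=\psi(e^{2\pi ik/j})$, the measures $\lambda_j=\frac1j\sum_k\delta_{\zeta_{k,j}}$ and $\lambda$ satisfy
\[
U_\Omega^{\lambda}(z)-U_\Omega^{\lambda_j}(z)=\int_0^1 F_z(s)\,ds-\frac1j\sum_{k=1}^j F_z(k/j),\qquad F_z(s):=g_\Omega\bigl(z,\psi(e^{2\pi is})\bigr).
\]

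The main point is then to show that $F_z$ extends to a holomorphic function on a strip $|\im s|<\rho$ whose width and sup-norm are uniform in $z$ in a neighborhood $W$ of $\Gamma$. Since $\Gamma$ is real-analytic, the function $g_\Omega(\cdot,\zeta)$ extends harmonically across $\Gamma$ by Schwarz reflection to a fixed one-sided neighborhood; joint real-analyticity of $g_\Omega(z,\zeta)$ on a neighborhood of $\Gamma\times\Lambda$ (away from the diagonal, which does not arise here since $\Lambda$ and $\Gamma$ are disjoint) allows us to locally realize $\zeta\mapsto g_\Omega(z,\zeta)$ as the real part of a holomorphic function in $\zeta$ on a fixed complex neighborhood of $\Lambda$. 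Composing with the holomorphic extension of $\psi$ and shrinking $W$ if necessary, we obtain a uniform strip of analyticity of width $\rho>0$ and a uniform bound $M$ for $F_z$ on that strip.

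Step three is the quadrature estimate itself. Expanding $F_z$ in Fourier series on $\mathbb{T}$ gives $F_z(s)=\sum_{m\in\mathbb{Z}}\hat F_z(m)\,e^{2\pi ims}$ with $|\hat F_z(m)|\le M\,e^{-2\pi\rho|m|}$ by contour deformation into the strip. The trapezoidal sum $\frac1j\sum_{k=1}^j F_z(k/j)$ equals $\sum_{m\in\mathbb{Z}}\hat F_z(jm)$, so
\[
\biggl|\int_0^1 F_z(s)\,ds-\frac1j\sum_{k=1}^j F_z(k/j)\biggr|=\biggl|\sum_{m\ne 0}\hat F_z(jm)\biggr|\le \frac{2M\,e^{-2\pi\rho j}}{1-e^{-2\pi\rho j}},
\]
uniformly for $z\in W$. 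With $c$ any positive constant strictly less than $2\pi\rho$, the lemma follows.

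The mildly technical part of the argument is Step~2: tracking down a fixed complex neighborhood of $\Lambda$ to which $g_\Omega(z,\cdot)$ extends holomorphically for all $z$ simultaneously in a neighborhood of $\Gamma$, uniformly in $z$. Once this uniform analytic extension is in hand, the trapezoidal estimate is entirely standard, and the geometric choice of equidistributed nodes $\zeta_{k,j}$ on an analytic curve plays the role of the Chebyshev-type nodes in the real-line theory.
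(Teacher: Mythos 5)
Your proof is correct and follows essentially the same approach as the paper's: both choose the nodes as $j$th roots of unity in a $\lambda$-equidistributing parametrization of $\Lambda$ (your $\psi=\phi\circ\tau^{-1}$ is exactly the paper's choice of a parametrization $\gamma$ with $\varpi(\gamma)\,|\gamma'|\equiv 1$), reduce the quadrature error to the aliasing identity $\frac{1}{j}\sum_k F(k/j)=\sum_m \widehat{F}(jm)$, and conclude from exponential decay of the Fourier coefficients of the real-analytic integrand. You are slightly more careful about the uniformity of the strip of analyticity in $z$, which the paper passes over more quickly, but the substance is the same.
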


\begin{proof} There exists a neighborhood $N_1$ of $\Gamma$ and a neighborhood $N_2$ of $\Lambda$ 
such that $g_\Omega(z,\zeta)$ is real-analytic on $N_1\times N_2$.

The curve $\Lambda$ is real-analytic and the measure $\lambda$ has a real-analytic density 
\(\varpi\) with respect to arc-length measure on $\Lambda$. Let \(\gamma\) denote a real-analytic 
(periodic) parametrization \(\gamma:\mathbb{T}\to \Lambda\). It follows that we can write 
$$
\int\limits_{\Lambda} g_\Omega(z,\zeta) d\lambda(\zeta) 
= \int\limits_{0}^{2\pi} g_\Omega(z,\gamma(e^{i\theta}))\varpi(\gamma(e^{i\theta}))
|\gamma'(e^{i\theta})| 
\frac{d\theta}{2\pi}.
$$
We now choose a particular parametrization \(\gamma\) satisfying 
the ODE \(\varpi(\gamma(e^{i\theta}))|\gamma'(e^{i\theta})|=1\).
This is possible to do with \(\gamma\) real-analytic, and corresponds 
to choosing an arc-length parametrization in the metric \(\varpi\,ds\).

We want to find points $\{\xi_{k,j}\}_{k=1}^{j}$ on $\mathbb{T}$ such that 
\begin{equation}\label{fourier_approx}
\int\limits_{0}^{2\pi} g_\Omega(z,\gamma(e^{i\theta})) 
\frac{d\theta}{2\pi} = \frac{1}{j}\sum\limits_{k=1}^{j} 
g_\Omega(z,\gamma(\xi_{k,j})) + \bigO(e^{-cj})
\end{equation}
as $j\to \infty$ for some constant $c>0$ uniformly in $z$. 
The lemma then follows by putting \(\zeta_{k,j}=\gamma(\xi_{k,j})\) for \(k=1,\ldots,j\). To this end, 
we write $g_\Omega(z,\gamma(e^{i\theta}))$ as a Fourier series
$$
g_\Omega(z,\gamma(e^{i\theta})) = \sum\limits_{n=-\infty}^{\infty} c_n(z) e^{i\theta n},
$$
and we get 
that the left-hand side of \eqref{fourier_approx} is equal to $c_0(z)$. 
We now pick the points $\{\xi_{k,j}\}$ as the $j$th roots of unity. We get
\begin{align*}
\frac{1}{j} \sum\limits_{k=1}^{j} g_\Omega(z,\gamma(\xi_{k,j}))  
= \frac{1}{j} \sum\limits_{k=1}^{j} \sum\limits_{n=-\infty}^{\infty} c_n(z) e^{2\pi i \frac{kn}{j}}. 
\end{align*}
Changing the order of summation and using that
$$
\sum\limits_{k=1}^j \frac{1}{j} e^{2\pi i \frac{kn}{j}} = \begin{cases}
1, \quad  j|n \\ 
0, \quad \text{otherwise},
\end{cases}
$$
we get
$$
\frac{1}{j} \sum\limits_{k=1}^{j} g_\Omega(z,\gamma(\xi_{k,j}))  
= c_0(z) + \sum\limits_{n\in \mathbb{Z}\setminus \{0\}} c_{nj}(z) .
$$
The function $g_\Omega(z,\gamma(e^{i\theta}))$ is real-analytic in a neighborhood of $\mathbb{T}$, 
hence its Fourier coefficient have exponential decay 
$c_n(z) = \bigO(e^{-cn})$ for some $c>0$. This completes the proof.
\end{proof}

We define holomorphic functions $e^{\mathcal{U}_{\lambda_j}(z)}$ and $e^{\mathcal{U}_{\lambda}(z)}$ on 
a neighborhood of $\Gamma$, in such a way that 
$$
e^{\re \mathcal{U}_{\lambda_j}(z)} = e^{U_\Omega^{\lambda_j}(z)}
$$ 
and 
$$
e^{\re \mathcal{U}_{\lambda}(z)} = e^{U_\Omega^{\lambda}(z)}.
$$
These functions are defined only up to an imaginary constant, 
and we choose this so that \(e^{\mathcal{U}_{\lambda_j}(z)}\) 
and \(e^{\mathcal{U}_{\lambda}(z)}\) are all real at the 
fixed point \(z_0\in\Gamma\). Since \(\lambda\) is a probability 
measure in \(\Omega\), an application of Green's formula shows that 
\(e^{\mathcal{U}_{\lambda}(z)}\) and \(e^{\mathcal{U}^{\lambda_j}}\) 
are single-valued on the neighborhood of \(\Gamma\). 

\begin{rem}
\label{rem:hol-U-lambda-j}
Note that the only singularities of \(e^{\mathcal{U}_{\lambda_j}(z)}\) 
are branch points of the form \((z-\zeta_{k,j})^{1/j}\) at each of the 
atoms of \(\lambda_j\), and hence \(e^{j \mathcal{U}_{\lambda_j}(z)}\) admits a holomorphic
extension to a neighborhood of the entire domain \(\Omega\) with simple zeros of \(\zeta_{k,j}\).
\end{rem}

From the error estimate in Lemma~\ref{point_approx} it follows that in a neighborhood of $\Gamma$ we have
\begin{equation}\label{eq:approx_point}
e^{-j\,\mathcal{U}_{\lambda}(z)} = \big(1+\bigO(e^{-cj})\big) e^{-j\,\mathcal{U}_{\lambda_j}(z)}.
\end{equation} 
We can now define the approximate reproducing kernel $K_n^{\sharp}(z,w)$ by
$$
K_n^{\sharp}(z,w) = e^{\frac{n}{2}(\mathcal{V}(z) +\overline{\mathcal{V}(w)} -Q(z)-Q(w) ) } 
\sum\limits_{j=m_n}^{M_n} h_{j,n}  e^{-j(\mathcal{U}_{\lambda_j}(z) + \overline{\mathcal{U}_{\lambda_j}(w)})},
$$
where $h_{j,n}$ are positive constants given by
$$
h_{j,n} = \frac{\rho(z_0)^2}{\nu(\Lambda)} \Big(n-\frac{j}{\nu(\Lambda)}\Big).
$$
We have some freedom in choosing $m_n$ and $M_n$. Let us say that
\begin{equation}
\label{eq:mn-Mn-hard-edge}
m_n = \lfloor n^{1/2} \rfloor, \qquad M_n = n\nu(\Lambda)(1-\epsilon_n),
\end{equation}
where $\epsilon_n$ is such that $M_n$ is an integer and satisfies
$$
\epsilon_n \to 0 , \qquad n\delta_n \epsilon_n \to +\infty,  
$$
as $n\to \infty$, where $\delta_n = \frac{\log n}{n}$. 

The approximate kernel is defined and holomorphic in a neighborhood of $\Omega$.

\begin{lem}\label{lem:local_rescale}  Locally 
uniformly for $\zeta,\eta\in \mathbb{H}$ we have
$$
\lim\limits_{n\to \infty} \frac{c(\zeta,\eta)}{(n\rho(z_0))^2}
\hspace{1pt} K_n^{\sharp}(z_n,w_n) =  B(\zeta,\eta).
$$
Furthermore, as $n\to\infty$ we have
\begin{equation}\label{norm_est_local_hard}
\frac{1}{(n \rho(z_0))^2}\int\limits_{\mathbb{D}(z_0,\delta_n)} 
|K_n^{\sharp}(z_n,w) |^2 1_{\C\setminus \Omega}(w)  dA(w) \leq  B(\zeta,\zeta) + o(1).
\end{equation} 
\end{lem}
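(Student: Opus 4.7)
The plan is to factor the trial kernel as $K_n^\sharp(z_n,w_n) = A_n(\zeta,\eta)\, S_n(\zeta,\eta)$, where
\[
A_n(\zeta,\eta) = e^{\frac{n}{2}(\mathcal{V}(z_n)+\overline{\mathcal{V}(w_n)}-Q(z_n)-Q(w_n))}, \quad
S_n(\zeta,\eta) = \sum_{j=m_n}^{M_n} h_{j,n}\, e^{-j(\mathcal{U}_{\lambda_j}(z_n)+\overline{\mathcal{U}_{\lambda_j}(w_n)})}.
\]
The factor $A_n$ was already analyzed in the proof of Theorem~\ref{thm:limiting_hard}, where it was shown that $c(\zeta,\eta)A_n(\zeta,\eta) \to e^{-(\zeta+\bar{\eta})}$ locally uniformly. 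The heart of the matter is thus to prove, uniformly for $|\zeta|,|\eta|\leq \rho(z_0)\log n$, the asymptotic
\[
S_n(\zeta,\eta) = (n\rho(z_0))^2\, e^{\zeta+\bar{\eta}}\, B(\zeta,\eta)\,(1+o(1)).
\]

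The first step is to replace each $\mathcal{U}_{\lambda_j}$ by $\mathcal{U}_\lambda$: by \eqref{eq:approx_point} and $j\geq m_n = \lfloor n^{1/2}\rfloor$, this costs a relative factor $1+O(e^{-c\sqrt{n}})$ per summand, which is negligible. Next I would Taylor expand $\mathcal{U}_\lambda$ at $z_0$. Since $U_\Omega^\lambda$ vanishes on $\Gamma$ and the imaginary constant was chosen to make $\mathcal{U}_\lambda(z_0)$ real, we have $\mathcal{U}_\lambda(z_0)=0$, while the Cauchy-Riemann equations combined with Lemma~\ref{greens_potential} yield $\mathcal{U}_\lambda'(z_0) = -e^{-i\theta}\rho(z_0)/\nu(\Lambda)$. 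Substituting the rescaling $z_n-z_0 = e^{i\theta}\zeta/(n\rho(z_0))$ gives
\[
-j\bigl(\mathcal{U}_\lambda(z_n)+\overline{\mathcal{U}_\lambda(w_n)}\bigr) = t_j(\zeta+\bar{\eta}) + O\bigl(j(|\zeta|^2+|\eta|^2)/n^2\bigr), \qquad t_j := j/(n\nu(\Lambda)),
\]
where the error is $o(1)$ uniformly on the claimed range. Writing $h_{j,n}=(n\rho(z_0)^2/\nu(\Lambda))(1-t_j)$ and $\Delta t = 1/(n\nu(\Lambda))$, the sum $S_n$ becomes a Riemann sum for $(n\rho(z_0))^2 \int_0^1 (1-t)\,e^{t(\zeta+\bar{\eta})}\,dt$, with truncation error $O(n^{-1/2}) + O(\epsilon_n^2) = o(1)$ from the cutoffs $m_n$ and $M_n$. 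The substitution $s=1-t$ identifies this integral with $e^{\zeta+\bar{\eta}}B(\zeta,\eta)$, which together with the asymptotic of $A_n$ gives the first claim of the lemma.

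For the mass bound, the same asymptotic yields $|K_n^\sharp(z_n,w)|^2 = (n\rho(z_0))^4 |B(\zeta,\eta)|^2(1+o(1))$ uniformly in the rescaled variable. Changing variables via $w = z_0+e^{i\theta}\eta/(n\rho(z_0))$, so that $dA(w) = dA(\eta)/(n\rho(z_0))^2$, reduces the left-hand side of \eqref{norm_est_local_hard} to
\[
(1+o(1))\int_{E_n}|B(\zeta,\eta)|^2\,dA(\eta),
\]
where $E_n := H_n\cap\mathbb{D}(0,\rho(z_0)\log n)$ and $H_n$ is the rescaling of $\C\setminus\Omega$. Since $H_n$ converges to $\mathbb{H}$ and $B(\cdot,\zeta)\in\mathcal{A}\subset A^2(\mathbb{H})$, dominated convergence combined with the reproducing property of $B$ in $\mathcal{A}$ (Theorem~\ref{thm:space-A}) gives $\int_{E_n}|B(\zeta,\eta)|^2\,dA(\eta) \to B(\zeta,\zeta)$, establishing the upper bound. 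The main technical obstacle is to ensure that the Taylor and Riemann sum errors remain $o(1)$ uniformly out to $|\eta|\leq \rho(z_0)\log n$, a polylogarithmic range forced by the choice $\delta_n = \log n / n$ of the localization region.
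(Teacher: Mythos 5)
For the \emph{first claim} of the lemma, your argument matches the paper's essentially step by step: replace $\mathcal{U}_{\lambda_j}$ by $\mathcal{U}_\lambda$ via \eqref{eq:approx_point}, use the already-established asymptotics of $c(\zeta,\eta)A_n(\zeta,\eta)$ from the proof of Theorem~\ref{thm:limiting_hard}, Taylor expand $\mathcal{U}_\lambda$ at $z_0$ using Lemma~\ref{greens_potential} (your explicit computation $\mathcal{U}_\lambda'(z_0)=-e^{-i\theta}\rho(z_0)/\nu(\Lambda)$ is correct and is precisely what drives the paper's expansion \eqref{eq:U-lambda-Taylor}), and carry out a Riemann sum argument. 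The substitution $s=1-t$ identifying the limit with $e^{\zeta+\bar\eta}B(\zeta,\eta)$ is correct. This part is fine.

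The \emph{mass bound} \eqref{norm_est_local_hard} is where there is a real gap. You write $|K_n^\sharp(z_n,w)|^2=(n\rho(z_0))^4|B(\zeta,\eta)|^2(1+o(1))$ ``uniformly in the rescaled variable,'' i.e.\ up to $|\eta|\lesssim\log n$. A multiplicative $(1+o(1))$ error cannot hold uniformly in this range: the function $B(\zeta,\eta)$ depends only on $w=\zeta+\bar\eta$ and equals $(1-e^{-w}(1+w))/w^2$, which has zeros with $\re w>0$; near such zeros the relative error necessarily blows up. What is actually available --- and what the paper establishes --- is a \emph{uniform additive} error bound: the normalized kernel equals a truncated comparison integral $f_{n,\zeta}(\eta)=\int_{m_n/(\nu(\Lambda)n)}^{M_n/(\nu(\Lambda)n)}(1-t)e^{(\zeta+\bar\eta)(t-1)}\,dt$ plus an error of size $\bigO(\log^2 n/n)$, uniformly for $|\eta|\lesssim n\delta_n$. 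This uniform additive control is the crux of the second claim, and you explicitly defer it (``the main technical obstacle is to ensure\ldots'') rather than prove it. In other words, the part you set aside is precisely the content of this half of the lemma.

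Two further points of comparison with the paper's proof. First, your bound $\int_{E_n}|B(\zeta,\eta)|^2\,dA(\eta)\to B(\zeta,\zeta)$ tacitly includes the portion of $E_n$ lying in the left half-plane $\mathbb{L}$ (an artifact of the curvature of $\Gamma$); the paper treats this piece separately with the crude bound $|K_n^\sharp|\le Cn^2$ and the estimate that the area of $\mathbb{D}(z_0,\delta_n)\cap(\C\setminus\Omega)\cap\{\eta\in\mathbb{L}\}$ is $\bigO(\delta_n^3)$. Second, the paper does not compare directly to $B$; it compares to the truncated $f_{n,\zeta}$, for which the bound $\int_{\mathbb{H}}|f_{n,\zeta}|^2\,dA\le B(\zeta,\zeta)$ follows cleanly from the Laplace-transform isometry (the support of the density shrinks). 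Your direct comparison to $B$ via dominated convergence gives the same conclusion, but only once the additive error estimate and the $\mathbb{L}$-contribution are in place --- neither of which is in your writeup.
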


\begin{proof} From equation \eqref{eq:approx_point} we see that the approximate kernel can be written as
$$
K_n^{\sharp}(z,w) =\big(1+ \bigO(e^{-cm_n})\big)\, 
e^{\frac{n}{2}(\mathcal{V}(z) +\overline{\mathcal{V}(w)} -Q(z)-Q(w))} 
\sum\limits_{j=m_n}^{M_n} h_{j,n} e^{-j(\mathcal{U}_{\lambda}(z)
+\overline{\mathcal{U}_{\lambda}(w)})}.
$$
We assume that $\zeta,\eta\in \mathbb{H}$. It follows from 
the proof of Theorem \ref{thm:limiting_hard} that
$$
 c(\zeta,\eta)  e^{\frac{n}{2}(\mathcal{V}(z_n) +\overline{\mathcal{V}(w_n)} -Q(z_n)-Q(w_n))}\  
= e^{-(\zeta + \bar{\eta})} \Big(1 + \bigO\Big(\frac{\|(\zeta,\eta)\|^2}{n^2}\Big)\Big).  
$$
From Lemma \ref{greens_potential} we get that
\begin{align}
\mathcal{U}_\lambda(z_n) &= \mathcal{U}_\lambda(z_0) + \mathcal{U}_\lambda'(z_0)(z_n-z_0)  
+ \bigO\Big(\frac{|\zeta|^2}{n^2}\Big) \\ &= \im \mathcal{U}_\lambda(z_0) - \frac{\zeta}{\nu(\Lambda)n} 
+ \bigO\Big(\frac{|\zeta|^2}{n^2}\Big),
\end{align}
and hence
$$
\mathcal{U}_\lambda(z_n) + \overline{\mathcal{U}_\lambda(w_n)} 
=- \frac{\zeta+\bar{\eta}}{\nu(\Lambda)n} + \bigO\Big(\frac{\|(\zeta,\eta)\|^2}{n^2}\Big).
$$
From this it follows that
\[
c(\zeta,\eta)e^{\frac{n}{2}(\mathcal{V}(z_n) +\overline{\mathcal{V}(w_n)} -Q(z_n)-Q(w_n))}
e^{-j(\mathcal{U}_{\lambda}(z_n)+\overline{\mathcal{U}_{\lambda}(w_n)})}=e^{(\zeta+\bar{\eta})
(\frac{j}{\nu(\Lambda)n}-1)}\Big(1+\bigO\Big(\frac{\|(\zeta,\eta)\|^2}{n}\Big)\Big),
\]
and hence we may write
\[
K_n^{\sharp}(z_n,w_n)c(\zeta,\eta)=
\sum\limits_{j=m_n}^{M_n} h_{j,n} e^{(\zeta+\bar{\eta})(\frac{j}{\nu(\Lambda)n}-1)}
+\bigO\big(n\|(\zeta,\eta)\|^2\big),
\]
where we have used the fact that \(|e^{(\zeta+\bar{\eta})(\frac{j}{\nu(\Lambda)n}-1)}|\le 1\) 
together with the crude bound \(h_{j,n}=\bigO(n)\) to bound the error term.
From the definition of $h_{j,n}$ we get
\begin{equation}
K_n^{\sharp}(z_n,w_n) c(\zeta,\eta) =\frac{n^2\rho(z_0)^2}{\nu(\Lambda)} 
\sum\limits_{j=m_n}^{M_n}\frac{1}{n} \Big(1-\frac{j}{\nu(\Lambda)n}\Big) 
e^{(\zeta+\bar{\eta})(\frac{j}{\nu(\Lambda)n}-1)} + \bigO\big(n\|(\zeta,\eta)\|^2\big).
\end{equation}

A Riemann sum approximation gives
\begin{align}
\sum\limits_{j=m_n}^{M_n}\frac{1}{n} \Big(1-\frac{j}{\nu(\Lambda)n}\Big) 
e^{(\zeta+\eta)(\frac{j}{\nu(\Lambda)n}-1)} &= \int\limits_{\frac{m_n}{n}}^{\frac{M_n}{n}} 
\Big(1-\frac{t}{\nu(\Lambda)}\Big) e^{(\zeta+\bar{\eta})(\frac{t}{\nu(\Lambda)}-1)} dt 
+ \bigO\Big(\frac{n\delta_n}{n}\Big)\\
& = \nu(\Lambda) \int\limits_{\frac{m_n}{\nu(\Lambda)n}}^{\frac{M_n}{\nu(\Lambda)n}} (t-1)
e^{(\zeta+\bar{\eta})(t-1)}dt +  \bigO\big(\delta_n\big)
\end{align}
uniformly for $\zeta,\eta\in \mathbb{H}$. It is now clear that locally uniformly in $\zeta,\eta$ we have
$$
\lim\limits_{n\to \infty} \frac{c(\zeta,\eta)}{(n\rho(z_0))^2}K_n^{\sharp}(\zeta,\eta)  = B(\zeta,\eta),
$$
since $\frac{m_n}{n\nu(\Lambda)} \to 0$ and $\frac{M_n}{n\nu(\Lambda)}\to 1$ as $n\to\infty$. 

To estimate the norm in \eqref{norm_est_local_hard} we will begin by showing that the contribution 
from $w =w(\eta) = z_0 + \frac{\eta}{n\rho(z_0)}$ such that $\eta\in \mathbb{L}$ is negligible. 
First, note that $|K_n^{\sharp}(z,w)|\leq Cn^{2}$ on 
$\C\setminus \Omega$. It will suffice to estimate the integral
$$
\int\limits_{\mathbb{D}(z_0,\delta_n)} 
1_{\C\setminus \Omega} (w) 1_{\mathbb{L}}(\eta) dA(w).
$$
This is the area between the tangent line and the boundary in the disc 
$\mathbb{D}(z_0,\delta_n)$ and is of order $\bigO(\delta_n^3)$. We see that
$$
\frac{1}{n^2 \rho(z_0)^2}\int\limits_{\mathbb{D}(z_0,\delta_n)} |K_n^{\sharp}(z,w)|^2 
1_{\C\setminus \Omega}(w) 1_{\mathbb{L}}(\eta)dA(w) = 
\bigO(n^2 \delta_n^3) = o(1),
$$ 
as $n\to\infty$. 

It remains to consider $\eta\in \mathbb{H}$ for which we have already derived an 
asymptotic expression. Let $f_{n,\zeta}(\eta)$ be the function
$$
f_{n,\zeta}(\eta) = 
\int\limits_{\frac{m_n}{\nu(\Lambda)n}}^{\frac{M_n}{\nu(\Lambda)n}}
(1-t) e^{(\zeta+\bar{\eta})(t-1)} dt.
$$
We have that
$$
\frac{1}{(n\rho(z_0))^2}K_n^{\sharp}(z_n,w)c(\zeta,\eta) 
= f_{n,\zeta}(\eta) + \bigO\Big(\frac{\log^2 n}{n}\Big),
$$
as $n\to \infty$, uniformly for $\eta = \bigO(n\delta_n)$. We obtain
\begin{align}
\frac{1}{(n\rho(z_0))^2} &\int\limits_{\mathbb{D}(z_0,\delta_n)} |K_n^{\sharp}(z_n,w)|^2 
1_{\C\setminus \Omega}(w) 1_{\mathbb{H}}(\eta) dA(w) \\
& \leq \frac{1}{(n\rho(z_0))^4}\int\limits_{\mathbb{D}(0,n\rho(z_0)\delta_n)} 
|K_n^{\sharp}(z_n,w(\eta))|^2 1_\mathbb{H}(\eta)dA(\eta) +o(1)\\
& \leq \int\limits_{\mathbb{H}} |f_{n,\zeta}(\eta)|^2 dA(\eta) +o(1 )\\
&\leq \int\limits_{\mathbb{H}} |B(\zeta,\eta)|^2 dA(\eta) +o(1)= B(\zeta,\zeta)+o(1).
\end{align}
The last inequality follows by Fatou's lemma since 
$f_{n,\zeta}(\eta)$ converges pointwise to $B(\zeta,\eta)$. 
\end{proof}

The following lemma proves that the norm of the approximate reproducing kernel 
$K_n^{\sharp}$ is small outside of the disc $\mathbb{D}(z_0,\delta_n)$. 
We define the smooth cut-off function $\chi: \C\to \R$ equal to 1 on a 
neighborhood of $\Omega$ and zero outside a slightly larger neighborhood.
The precise conditions on \(\chi\) will be clarified below, but here we only 
require that the approximate kernel \(K_n^\sharp(z,w)\) is well-defined 
wherever \(\chi\) does not vanish.

\begin{lem}\label{lem:off_diag_hard} Let $z_n = z_0 + \frac{e^{i\theta}\zeta}{\rho(z_0)n}$ 
with $\zeta$ in a compact set. We have 
$$
\frac{1}{(n\rho(z_0))^2}\int\limits_{\C\setminus \mathbb{D}(z_0,\delta_n)} 
|K_n^{\sharp}(z_n,w) \chi(w)|^2 1_{\C\setminus \Omega}(w) dA(w) = o(1),
$$
as $n\to \infty$. 
\end{lem}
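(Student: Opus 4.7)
The plan is to combine the near-field local asymptotic from Lemma~\ref{lem:local_rescale} with a trace-type cancellation estimate, exploiting the oscillations in the sum defining $K_n^\sharp$. Write
\[
K_n^\sharp(z,w) = \sum_{j=m_n}^{M_n} h_{j,n}\, \phi_j(z)\overline{\phi_j(w)}, \qquad \phi_j(w) = e^{\frac{n}{2}\mathcal{V}(w) - j\mathcal{U}_{\lambda_j}(w) - \frac{n}{2}Q(w)},
\]
and expand
\[
\int |K_n^\sharp(z_n, w)|^2\chi^2 \mathbf{1}_{\C\setminus\Omega}\, dA(w) = \sum_{j,k} h_{j,n} h_{k,n}\, \phi_j(z_n)\overline{\phi_k(z_n)}\, I_{jk},
\]
with $I_{jk} = \int \overline{\phi_j(w)}\phi_k(w)\chi^2 \mathbf{1}_{\C\setminus\Omega}\, dA(w)$. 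The idea is to compute the total mass on the left, and then \emph{subtract} the near-field contribution controlled by Lemma~\ref{lem:local_rescale}.

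For the diagonal terms ($j=k$), using \eqref{eq:approx_point} to replace $\mathcal{U}_{\lambda_j}$ by $\mathcal{U}_\lambda$ with negligible error, the integrand $|\phi_j(w)|^2$ has the form $e^{n(V-Q)(w) - 2j U_\Omega^\lambda(w)}$, which peaks on $\Gamma$ with exponential decay of rate $2\rho(w_*)r_j$ (where $r_j = n - j/\nu(\Lambda)$) in the outward normal direction. Laplace's method gives $I_{jj} \sim C_\Gamma/r_j$, and a Riemann-sum argument analogous to the one in Lemma~\ref{lem:local_rescale} identifies
\[
\sum_j h_{j,n}^2 |\phi_j(z_n)|^2 I_{jj} = (n\rho(z_0))^2 B(\zeta,\zeta)(1+o(1)).
\]

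For the off-diagonal terms ($j\ne k$), the integrand carries an oscillating factor $e^{i(j-k)\operatorname{Im}\mathcal{U}_\lambda(w)}$. Since $\mathcal{U}_\lambda'(z_0) = -\rho(z_0)e^{-i\theta}/\nu(\Lambda)\ne 0$ and $\mathcal{U}_\lambda$ is real-analytic, $\nabla\operatorname{Im}\mathcal{U}_\lambda$ is nonvanishing in a neighborhood of $\Gamma$, so iterated integration by parts in $w$ yields $|I_{jk}| \le C_p/|j-k|^p$ for any $p\ge 1$. Summing, the off-diagonal contribution is $o(n^2)$. Combining with the diagonal asymptotic gives the total $L^2$ mass equal to $(n\rho(z_0))^2 B(\zeta,\zeta)(1+o(1))$; subtracting the matching near-field contribution from Lemma~\ref{lem:local_rescale}'s analysis delivers that the far-field mass is $o(n^2)$, which is exactly the content of the lemma.

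The main obstacle is making the integration-by-parts estimate uniform in $j,k,n$. The amplitude $e^{n(V-Q)(w) - (j+k)U_\Omega^\lambda(w)}$ is steep (concentrated in a strip of width $\sim 1/(nr_j)$ around $\Gamma$), and its derivatives scale with $n$; one must carefully balance these amplitude derivatives against powers of $|j-k|^{-1}$ from the phase oscillation. A related subtlety is the contribution from the endpoints $j=m_n$ and $j=M_n$ of the sum: these act as boundary terms in the partial summation, and the choice \eqref{eq:mn-Mn-hard-edge} with $n\delta_n\epsilon_n\to\infty$ is designed precisely so that these boundary contributions are absorbed into the $o(n^2)$ error.
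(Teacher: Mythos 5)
Your proposal takes a genuinely different route from the paper, and it contains two real gaps.

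The paper's proof does not expand the integral as a double sum over $j,k$. Instead it recognizes that, after replacing $\mathcal{U}_{\lambda_j}$ by $\mathcal{U}_{\lambda}$, the inner sum $\sum_j h_{j,n}\Theta^j$ with $\Theta = e^{-(\mathcal{U}_\lambda(z)+\overline{\mathcal{U}_\lambda(w)})}$ is essentially a geometric sum whose explicit closed form is dominated by $n|\Theta|^{M_n}/|\Theta-1| + |\Theta|^{M_n}/|\Theta-1|^2$. The crucial pointwise inequality $|\Theta|^{2n\nu(\Lambda)}e^{n(V-Q)(z)+n(V-Q)(w)}\le 1+o(1)$ (obtained from the function $G=-2\nu(\Lambda)U_\Omega^\lambda+V-Q$, which vanishes to second order on $\Gamma$ with $\Delta G<0$) collapses the $w$-dependent weight, and one then integrates over level curves of $|\Theta|$ using $|\Theta-1|\gtrsim|\theta|$. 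The choice of $M_n$ enters via $\delta_n(\nu(\Lambda)n-M_n)\to\infty$, which kills the resulting integral.

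The first gap in your argument is the integration-by-parts estimate $|I_{jk}|\le C_p/|j-k|^p$. You correctly identify the obstacle yourself: the amplitude $e^{n(V-Q)(w)-(j+k)U_\Omega^\lambda(w)}$ is concentrated in a strip of width $\sim 1/n$ around $\Gamma$, so each integration by parts against the phase $e^{i(j-k)\im\mathcal{U}_\lambda(w)}$ produces a factor of order $n/|j-k|$, not $1/|j-k|$. For the bulk of the pairs $(j,k)$, where $|j-k|$ is of order $\sqrt n$ to $n$, this does not improve under iteration unless $|j-k|\gg n$, which never happens. So the proposed bound on the off-diagonal terms is unsubstantiated and almost certainly false as stated. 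Making the ``careful balance'' you mention precise would require a genuine stationary-phase analysis across the sharp ridge, and it is not clear the result would be $o(n^2)$ without further input.

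The second gap is the subtraction step. You propose to show the total $L^2$-mass equals $(n\rho(z_0))^2B(\zeta,\zeta)(1+o(1))$ and subtract the near-field contribution from Lemma~\ref{lem:local_rescale}. But that lemma asserts only an upper bound $\le B(\zeta,\zeta)+o(1)$ (it is proved via Fatou), not asymptotic equality. A priori, the near-field mass could be strictly below $B(\zeta,\zeta)$, in which case the far-field would carry a nonvanishing fraction of the total and your conclusion would fail. One would have to upgrade Lemma~\ref{lem:local_rescale} to an asymptotic equality (plausible, but not what the paper proves or uses). The paper's argument sidesteps both issues by bounding the far-field contribution directly and independently.
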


\begin{proof} Because of the cut-off function $\chi$ we only have to consider 
a neighborhood of $\Gamma$ where we know that
$$
K_n^{\sharp}(z,w) =(1+ o(1))\hspace{1pt} e^{\frac{n}{2}(\mathcal{V}(z) +\overline{\mathcal{V}(w)})} 
e^{-\frac{n}{2}(Q(z)+Q(w))} \sum\limits_{j=m_n}^{M_n} h_{j,n} 
e^{-j(\mathcal{U}_{\lambda}(z)+\overline{\mathcal{U}_{\lambda}(w)})}.
$$

The sum has a simple structure and can be computed explicitly. For convenience we let 
$\Theta = e^{-(\mathcal{U}_{\lambda}(z)+\overline{\mathcal{U}_{\lambda}(w)})}$. 
Using that $|\Theta|\geq 1$ on $\C\setminus \Omega$ we get the bound 
\begin{equation}\label{geometric_bound}
\Big \lvert \sum\limits_{j=m_n}^{M_n} h_{j,n} \Theta^j \Big\rvert \lesssim 
n \frac{|\Theta|^{M_n}}{|\Theta-1|} + \frac{|\Theta|^{M_n}}{|\Theta-1|^2}.
\end{equation}
It remains to estimate the integral
\begin{multline}
\label{eq:exterior-norm-bd-approx-kernel-hard-edge}
\int\limits_{\C\setminus \mathbb{D}(z_0,\delta_n)}|K_n^{\sharp}(z,w)|^2 \chi(w) 
1_{\C\setminus \Omega}(w)\,dA(w) \\ 
\lesssim   \int\limits_{(\C\setminus \Omega)\setminus \mathbb{D}(z_0,\delta_n)} 
\Big(n \frac{|\Theta|^{M_n}}{|\Theta-1|} + \frac{|\Theta|^{M_n}}{|\Theta-1|^2}\Big)^2 
e^{n(V-Q)(z)+n(V-Q)(w)} dA(w).
\end{multline}
We claim that, as \(n\to\infty\),
\begin{equation}\label{eq:less_one}
|\Theta|^{2n\nu(\Lambda)} e^{n(V-Q)(z)} e^{n(V-Q)(w)} \leq 1+o(1),
\end{equation}
for $w$ in some one-sided neighborhood of $\Gamma$ in \(S\setminus\Omega\) 
and \(\zeta\) confined to a compact set. 
To see why, consider 
$$
G(w)=-2\nu(\Lambda)U_\Omega^{\lambda}(w) + V(w)-Q(w).
$$
Since $G(w) =0$ on $\Gamma$ and the normal derivative of $G$ is zero on 
$\Gamma$ we get $\nabla G(w) = 0$ for $z\in \Gamma$. This implies that, 
$\partial_{\mathbf{t}}^2 G(w)=0$ and we get
\begin{equation}\label{eq:second_normal}
\partial_{\mathbf{n}}^2 G(w) = 4\Delta G(w) = -4\Delta Q(w)<0.
\end{equation}
This shows that there is a one-sided neighborhood of $\Gamma$ in which \(e^{G(w)}\le 1\) holds. 
An inspection shows that
\[
|\Theta|^{2n\nu(\Lambda)} e^{n(V-Q)(z)} e^{n(V-Q)(w)}=e^{nG(z)+nG(w)}.
\]
For \(\zeta\) confined to a compact set, a Taylor 
expansion of \(G\) shows that \(nG(z_n)=\bigO(1/n)\)
and hence \eqref{eq:less_one} follows.

We now get some simple estimates for the integrals by integrating over the level 
curves $|\Theta| = v$. We make the change of variables $\Theta(w) = v(\theta) e^{2\theta i}$. 
The Jacobian of the change of variable can be bounded by a constant. We also have the estimate
$$
|\Theta-1| = | v(\theta) e^{2\theta i} -1 |\gtrsim |e^{2\theta i}-1| \gtrsim |\theta|.
$$

We now get the estimates
  
\begin{align}
\int\limits_{(\C\setminus \Omega)\setminus \mathbb{D}(z_0,\delta_n)} n^2 
\frac{|\Theta|^{2M_n-2n\nu(\Lambda)} }{|\Theta-1|^2} dA(w) &\lesssim n^2\int\limits_{\delta_n}^{1} \frac
{1}{|\theta|^2} d\theta \int\limits_{0}^1 v^{2M_n-2n\nu(\Lambda)} dv\\
&\lesssim n^2 \frac{1}{\delta_n} \frac{1}{\nu(\Lambda)n-M_n},
\end{align}
and
\begin{align}
\int\limits_{(\C\setminus \Omega)\setminus \mathbb{D}(z_0,\delta_n)} 
\frac{|\Theta|^{2M_n-2n\nu(\Lambda)} }{|\Theta-1|^4} dA(w) &\lesssim \int\limits_{\delta_n}^{1} 
\frac{1}{|\theta|^4} d\theta \int\limits_{0}^1 v^{2M_n-2n\nu(\Lambda)} dv\\
&\lesssim  \frac{1}{\delta_n^3} \frac{1}{\nu(\Lambda)n-M_n}.
\end{align}
Note that from the definition \eqref{eq:mn-Mn-hard-edge} of $M_n$ we have
$$
\delta_n (\nu(\Lambda)n- M_n) = \delta_n \epsilon_n n\nu(\Lambda) \to \infty,
$$   
as $n \to \infty$. This together with Cauchy-Schwarz' inequality 
shows that the right-hand side in \eqref{eq:exterior-norm-bd-approx-kernel-hard-edge} 
is of order \(o(n^2)\) as \(n\to\infty\), completing the proof. 
\end{proof}

\subsection{Correction to polynomial}
In order to use the reproducing property of $K_n$,  we will show that 
$K_n^{\sharp}(\cdot,w)$ can be corrected to a weighted polynomial in $\mathcal{W}_n$
at little cost. 

We will consider $w = w_n$, and try to find a function $u_w$ such that 
\begin{equation}\label{eq:correction_hard}
\chi(z) K_n^{\sharp}(z,w) e^{\frac{n}{2}Q(z)} - u_w(z)
\end{equation}
is a polynomial of degree at most $n-1$. The correction $u_w(z)$ should be small both in norm and pointwise.

If we can show that
\begin{equation}\label{eq:norm_hard_der}
\int\limits_{\C} \left| \bar{\partial}_z \Big( \chi(z) K_n^{\sharp}(z,w)e^{\frac{n}{2}Q(z)}\Big) 
\right|^2 e^{-nQ(z)} dA(z) = \bigO(e^{-cn}),
\end{equation}
for some $c>0$, a Hörmander estimate (see \cite{HedenmalmWennmanActa}*{Proposition~2.6}) 
will give the existence of $u_w$ such that
$$
\|u_w\|_{\mathcal{W}_n}^2 = \bigO(e^{-c'n}), 
$$
for some $c'>0$ and $u_w(z) = \bigO(z^{n-1})$ as $|z|\to \infty$. Since 
$\chi(z)K_n^{\sharp}(z,w)e^{\frac{n}{2}Q(z)}$  vanishes outside a fixed compact set,
it will follow that \eqref{eq:correction_hard} defines a polynomial of degree at most $n-1$.

The derivative in \eqref{eq:norm_hard_der} is non-zero only where $\bar\partial_z \chi(z) \neq 0$. 
It follows that it is enough to estimate the integral
$$
\int\limits_{N} |K_n^{\sharp}(z,w)|^2 dA(z),
$$
where $N = \{z : \bar\partial_z \chi(z) \neq 0\}$. From the definition of 
$K_n^{\sharp}(z,w)$ we get the upper bound 
$|K_n^{\sharp}(z,w) |^2 \leq K_n^{\sharp}(z,z) K_n^{\sharp}(w,w)$ 
and from local estimates from before $K_n^{\sharp}(w_n,w_n) = \bigO(n^2)$.

We consider the function 
$$
F_{j,n}(z) = e^{-j\mathcal{U}_{\lambda_j}(z)} e^{\frac{n}{2}\mathcal{V}(z)}.
$$
From the definition $F_{j,n}$ is analytic in a neighborhood of $\Omega$. 
Recall that $\chi$ is a smooth cut-off function equal to $1$ on a neighborhood 
of $\Omega$. We want to show that $F_{j,n}(z)e^{-\frac{n}{2}Q(z)}$ is small 
on the support of $\bar{\partial}\chi$. 
On a neighborhood of $\Gamma$ we have
\begin{align*}
|F_{j,n}(z)|^2 e^{-nQ(z)} = (1+\bigO(e^{-cj})) e^{-2jU_\Omega^{\lambda}(z)} e^{nV(z)} e^{-nQ(z)}. 
\end{align*}
We want to say that the function
$$
-2jU_\Omega^{\lambda}(z) + nV(z) -nQ(z),
$$
is large and negative outside a small neighborhood of $\Omega$. 
To begin with note that the function $(\nu(\Lambda)n-j)U_\Omega^{\lambda}$ 
is zero on $\Gamma$ and has negative normal derivative. 
It suffices to study the function 
$$
G(z)=-2\nu(\Lambda)U_\Omega^{\lambda}(z) + V(z)-Q(z).
$$
We have shown (see \eqref{eq:second_normal}) that 
$$
\partial^2_{\mathbf{n}} G(z) < 0,
$$
and since it vanishes along the boundary together with its first 
normal derivative it follows that, on the set \(N\),
$$
|F_{j,n}(z)|^2 e^{-nQ(z)} = \bigO(e^{-cn}),
$$
for some constant $c>0$.  From the definition of the approximate 
kernel it is clear that the same estimate applies for $K_n^{\sharp}(z,w)$.

The function $u_w$ can also be estimated pointwise. On the set where $\chi(z)=1$ 
we can use a subharmonicity estimate cf.\ \cite{RescalingWardAKM}*{Lemma 3.3} and obtain that
$$
u_w(z)e^{-\frac{n}{2}Q(z)} = \bigO(e^{-cn}),
$$
with possibly a smaller constant $c>0$.

\subsection{Proof of Theorem \ref{thm:main-hard-edge}}
\noindent Let $n_k$ be a sequence along which we have convergence of the rescaled 
kernel $c(\zeta,\eta)\cdot k_n(\zeta,\eta)$ locally uniformly to a limiting kernel 
$K(\zeta,\eta)$.  From Corollary \ref{cor:upper_bound_hard} we have the upper bound 
$K(\zeta,\zeta)\leq B(\zeta,\zeta)$. 

We will now prove the opposite inequality, using the approximate reproducing kernel 
$K_n^\sharp$. We have since $K_n$ is the reproducing kernel of the space $\mathcal{W}_n$ that
$$
K_n(z,z) = \sup\limits_{f\in\mathcal{W}_n\setminus \{0\}} \frac{|f(z)|^2 }{\| f\|_{\mathcal{W}_n}^2}.
$$ 
We have proven that $\chi\, K_n^{\sharp}(\cdot,z_n)-u_{z_n}e^{-\frac{n}2 Q}$ is 
a weighted polynomial of degree at most $n-1$. 
Since $u_{z_n}e^{-\frac{n}2 Q}$ is small both pointwise and in norm, we get
\begin{equation}\label{eq_pol_rep}
K_n(z_n,z_n) \geq (1+o(1))\frac{K_n^{\sharp}(z_n,z_n)^2}{\|K_n^{\sharp}(\cdot,z_n)
\chi(\cdot)\|_{\mathcal{W}_n}^2}.
\end{equation}
From Theorem \ref{thm:limiting_hard} and Lemma \ref{lem:local_rescale} we have that
$$
\lim\limits_{k\to\infty} \frac{1}{(n_k\rho(z_0))^2} K_{n_k}(z_{n_k},z_{n_k}) = K(\zeta,\zeta)
$$
and
$$
\lim\limits_{k\to\infty} \frac{1}{(n_k\rho(z_0))^2} K_{n_k}^{\sharp}(z_{n_k},z_{n_k}) 
= B(\zeta,\zeta).
$$
Lemma \ref{lem:local_rescale} and Lemma \ref{lem:off_diag_hard} gives that
$$
\lim\limits_{k\to \infty} \frac{1}{(n_k\rho(z_0))^2}\|K_{n_k}^{\sharp}(\cdot,z_{n_k})
\chi(\cdot)\|_{\mathcal{W}_n}^2 \leq B(\zeta,\zeta).
$$
Combining these three results with \eqref{eq_pol_rep} we obtain the lower bound
$$
K(\zeta,\zeta)\geq B(\zeta,\zeta).
$$
We have already proved the reversed inequality, which means that $K(\zeta,\zeta) =B(\zeta,\zeta)$. 
Both functions are Hermitian-analytic so they are equal not only on the diagonal, but everywhere. 
This proves that the limit point in Theorem \ref{thm:limiting_hard} is unique, which completes the proof.
\hfill \(\square\)

\section{Soft and Soft/hard edges}
In this section we will prove Theorems~\ref{thm:main-soft-edge} and 
Theorem~\ref{thm:main-soft-hard-edge}. We recall that $Q$ is a real-analytic 
potential with droplet $S$ subject to the standing assumptions from Section~\ref{s:main-results}.
In particular, the boundary \(\partial S\) consists of finitely many real-analytic Jordan curves. 

The following lemma is well known, and plays the same role as as 
Lemma~\ref{lem:apriori_hard} did in the hard-edge case.

\begin{lem}\label{lem:apriori_soft} The kernel $K_n(z,w)$ satisfies
$$
K_n(z,z) \leq C n e^{n(\check{Q}(z)-Q(z))}, \quad z\in \C.
$$ 
\end{lem}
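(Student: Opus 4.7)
The plan is to use the reproducing kernel identity
\[
K_n(z,z) = \sup\bigl\{|q(z)|^2 e^{-nQ(z)} : q\in\C[z],\ \deg q\le n-1,\ \|qe^{-nQ/2}\|_{L^2(\C,dA)}=1\bigr\}
\]
and to establish the pointwise bound $|q(z)|^2 e^{-nQ(z)}\leq Cn\,e^{n(\check{Q}(z)-Q(z))}$ for each such unit weighted polynomial. I would carry this out in two steps, following the standard weighted Bernstein--Walsh strategy: a local subharmonic mean-value estimate near the droplet, followed by a maximum-principle extension to the exterior.

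For the first step, I would show the uniform bound $|q(z)|^2 e^{-nQ(z)}\leq Cn$ for $z$ in a fixed neighborhood of $S$. Near any such $z_0$, write the Taylor expansion $Q(z)=2\re h(z)+r(z)$, with $h$ holomorphic at $z_0$ and $r$ smooth with bounded Hessian. The function $|q(z)e^{-nh(z)}|^2$ is subharmonic, so the mean-value inequality on $\D(z_0,\epsilon n^{-1/2})$ gives
\[
|q(z_0)|^2 e^{-2n\re h(z_0)} \leq \frac{Cn}{\epsilon^2}\int_{\D(z_0,\epsilon n^{-1/2})} |q(w)|^2 e^{-2n\re h(w)}\,dA(w).
\]
Since $|w-z_0|\leq \epsilon n^{-1/2}$, the bounded Hessian of $r$ gives $n|r(w)-r(z_0)|\leq C\epsilon^2$, so $e^{-nr(z_0)}$ and $e^{-nr(w)}$ differ by a bounded factor on the disc. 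Multiplying through and using $\|qe^{-nQ/2}\|_{L^2(\C,dA)}=1$ yields $|q(z_0)|^2 e^{-nQ(z_0)}\leq Cn$. The area $\sim 1/n$ of the disc is precisely what produces the factor $n$, in contrast with the hard-edge argument where radius $1/n$ and area $1/n^2$ lead to $n^2$.

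For the second step, the function $\phi(z)=\log|q(z)|^2 - n\check{Q}(z)$ is subharmonic in $\C\setminus S$ because $\check{Q}$ is harmonic there. The growth condition \eqref{eq:growth} together with $\check{Q}=F_\sigma-2U^\sigma$ gives $\check{Q}(z)=\log|z|^2+\bigO(1)$ as $|z|\to\infty$, so $\phi(z)\leq -2\log|z|+\bigO(1)$ tends to $-\infty$. The maximum principle then yields
\[
\sup_{\C\setminus S}\phi \leq \sup_{\partial S}\phi = \sup_{\partial S}\bigl(\log|q|^2 - nQ\bigr),
\]
where the equality uses $\check{Q}=Q$ on $S$. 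By the first step the right-hand side is bounded by $\log(Cn)$, so exponentiating produces $|q(z)|^2 e^{-nQ(z)}\leq Cn\,e^{n(\check{Q}(z)-Q(z))}$ on $\C\setminus S$. On $S$ the exponent vanishes and Step 1 gives the same bound directly. Taking the supremum over unit-norm weighted polynomials completes the proof. The only delicate point is the choice of scale $n^{-1/2}$ in Step 1: any larger radius would inflate the contribution of the non-holomorphic part of $Q$ and spoil the correct power of $n$.
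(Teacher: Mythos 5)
Your proof is correct and is the standard weighted Bernstein--Walsh argument that the paper itself invokes without proof (it simply states the lemma as ``well known''). It also runs in parallel with the paper's explicit proof of the hard-edge analogue, Lemma~\ref{lem:apriori_hard}: there the subharmonicity estimate is applied on discs of radius $\sim n^{-1}$ (giving $n^2$) followed by a maximum-principle extension via the harmonic replacement $Q_h$, whereas you use the soft-edge scale $n^{-1/2}$ (giving $n$) followed by a maximum-principle extension off $S$ via the harmonicity of $\check{Q}$. Both rely on the same two ingredients; your version is the one relevant here. The only small point worth making explicit is the normalization of the Taylor decomposition in Step~1: for the bound $n|r(w)-r(z_0)|\le C\epsilon^2$ on the disc $\D(z_0,\epsilon n^{-1/2})$ to follow from a bounded Hessian, you need $h$ chosen so that $r=Q-2\re h$ vanishes together with its gradient at $z_0$, so that $|r(w)-r(z_0)|=\bigO(|w-z_0|^2)$; otherwise a nonzero $\nabla r(z_0)$ would only give $n|r(w)-r(z_0)|=\bigO(\epsilon\sqrt{n})$. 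This is implicit in ``Taylor expansion'' but should be stated. Everything else, including the growth comparison $\phi(z)\le -2\log|z|+\bigO(1)$ from $\deg q\le n-1$ and $\check Q(z)=\log|z|^2+\bigO(1)$, is exactly right.
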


\subsection{Limiting kernel at soft edges}

We define the rescaled kernel 
\[
k_n(\zeta,\eta) = \frac{1}{n\Delta Q(z_0)}K_n(z_n,w_n)
\]
where \(z_n\) and \(w_n\) denote the rescaled variables 
\begin{equation}
\label{eq:rescaling-soft}
z_n = z_0+\frac{e^{i\theta}\zeta}{\sqrt{n\Delta Q(z_0)}} \quad \text{ and } \quad
w_n =z_0 +\frac{e^{i\theta}\eta}{\sqrt{n\Delta Q(z_0)}} ,
\end{equation}
respectively. Here $e^{i\theta}$ is the outer unit normal of $S$ at $z_0$.

The following theorem is proven in \cite{RescalingWardAKM}*{Theorem 3.5}.
\begin{thm}\label{thm:limiting_soft} 
There exist cocycles $c_n(\zeta,\eta)$ such that $(c_n\cdot k_n)_{n=0}^{\infty}$ is a normal family. 
Any limit point takes the form \(K(\zeta,\eta)={\bf K}(\zeta,\eta) e^{-\frac12(|\zeta|^2+|\eta|^2)}\), 
where \({\bf K}(\zeta,\eta)\) is Hermitian-entire and satisfies the \enquote{mass-one inequality}
$$
\int\limits_{\C} |\mathbf{K}(\zeta,\eta)|^2 e^{-|\eta|^2} dA(\eta) \leq \mathbf{K}(\zeta,\zeta) 
$$ 
as well as the exterior decay estimate
$$
|\mathbf{K}_\eta(\zeta) e^{\zeta^2/2} | \leq C_\eta, \quad \re \zeta \geq 0.
$$
\end{thm}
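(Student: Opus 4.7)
The plan is to follow the same template as the hard-edge case (Theorem \ref{thm:limiting_hard}), adapting it to the soft-edge scaling with the Gaussian Bergman factor. The three outputs that must be produced are (a) normality of a suitable renormalized sequence, (b) a mass-one inequality for the limit, and (c) the exterior growth bound. All of these follow from Lemma~\ref{lem:apriori_soft}, Cauchy-Schwarz for reproducing kernels, Fatou's lemma, and careful Taylor expansions near $z_0\in\partial S$.

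First I would construct the cocycle. Since $Q$ is real-analytic in a neighborhood of $z_0$, it admits a holomorphic extension $\mathcal{Q}(z)$ (say the holomorphic polarization $Q(z,\overline{z_0})$, or simply the truncated Taylor polynomial $Q(z_0)+2\partial Q(z_0)(z-z_0)$ viewed as a holomorphic function) whose real part agrees with $Q$ to first order at $z_0$. A Taylor expansion yields
\[
\tfrac{n}{2}\bigl(\mathcal{Q}(z_n)+\overline{\mathcal{Q}(w_n)}-Q(z_n)-Q(w_n)\bigr)
=-\tfrac12(|\zeta|^2+|\eta|^2)+i\,g_n(\zeta,\eta)+\bigO(n^{-1/2}),
\]
locally uniformly, where $g_n$ is real and affine in $(\zeta,\eta)$ hence a cocycle. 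Setting $A_n(\zeta,\eta)=e^{\frac{n}{2}(\mathcal{Q}(z_n)+\overline{\mathcal{Q}(w_n)}-Q(z_n)-Q(w_n))}$ and $c_n(\zeta,\eta)=e^{-ig_n(\zeta,\eta)}$, we obtain $c_n A_n\to e^{-\frac12(|\zeta|^2+|\eta|^2)}$ locally uniformly. Define the Hermitian-analytic function $\Psi_n=k_n/A_n$. The reproducing property gives $|k_n(\zeta,\eta)|^2\le k_n(\zeta,\zeta)k_n(\eta,\eta)$, and combining this with Lemma~\ref{lem:apriori_soft} (which, after rescaling, bounds $k_n(\zeta,\zeta)$ by $C e^{n(\check Q-Q)(z_n)}$) shows that $\Psi_n$ is locally uniformly bounded and hence a normal family. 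Passing to subsequential limits, $c_n k_n$ converges locally uniformly to a kernel of the claimed form $K(\zeta,\eta)={\bf K}(\zeta,\eta)e^{-\frac12(|\zeta|^2+|\eta|^2)}$ with ${\bf K}$ Hermitian-entire.

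Next the mass-one inequality. For the (unconstrained) soft-edge model, the reproducing property of $K_n$ on $\mathcal{W}_n\subset L^2(\C,dA)$ gives
\[
\int_{\C}|K_n(z,w)|^2\,dA(w)=K_n(z,z).
\]
Specializing $z=z_n$ and changing variables $w=z_0+e^{i\theta}\eta/\sqrt{n\Delta Q(z_0)}$ turns this into
\[
\int_{\C}|k_n(\zeta,\eta)|^2\,dA(\eta)=\frac{1}{n\Delta Q(z_0)}K_n(z_n,z_n)=k_n(\zeta,\zeta).
\]
By Fatou's lemma along the convergent subsequence we conclude
\[
\int_{\C}|K(\zeta,\eta)|^2\,dA(\eta)\le K(\zeta,\zeta),
\]
which in terms of ${\bf K}$ is exactly the stated mass-one inequality after absorbing the Gaussian factors.

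Finally the exterior decay. Write $u=Q-\check Q$. By the standing assumptions $u\ge 0$ with $u=0$ on $S$; moreover $u$ and $\nabla u$ vanish on $\partial S$ while $\Delta u=\Delta Q$ in the exterior. Since the tangential second derivative along $\partial S$ vanishes and $\partial_n^2 u+\partial_t^2 u=4\Delta u$, a Taylor expansion along the outward normal gives $u(z_0+se^{i\theta})=2\Delta Q(z_0)\,s^2+\bigO(s^3)$. Inserting $s=\re\zeta/\sqrt{n\Delta Q(z_0)}$, we find $n u(z_n)=2(\re\zeta)^2+o(1)$ locally uniformly for $\re\zeta\ge 0$. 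Combining with Lemma~\ref{lem:apriori_soft},
\[
k_n(\zeta,\zeta)\le Ce^{-2(\re\zeta)^2+o(1)},\qquad \re\zeta\ge 0,
\]
and by Cauchy-Schwarz the limiting kernel satisfies $|K(\zeta,\eta)|^2\le K(\zeta,\zeta)K(\eta,\eta)\le C_\eta e^{-2(\re\zeta)^2}$ for $\re\zeta\ge 0$. Rewriting in terms of ${\bf K}$, the Gaussian factor $e^{-|\zeta|^2}$ on the diagonal gives
\[
|{\bf K}(\zeta,\eta)e^{\zeta^2/2}|^2\le C_\eta e^{|\eta|^2},\qquad \re\zeta\ge 0,
\]
so that $|{\bf K}_\eta(\zeta)e^{\zeta^2/2}|\le C_\eta$ as claimed. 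The main delicate point is the matching computation of the second normal derivative of $Q-\check Q$, which must exploit the $C^{1,1}$ continuation of $u$ across $\partial S$ and is the analogue of \eqref{eq:second_normal} from the hard-edge case; the rest is a direct transcription of the hard-edge argument to the soft-edge scaling.
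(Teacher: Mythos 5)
The paper does not give its own proof of this theorem: it is cited from the reference \cite{RescalingWardAKM}*{Theorem~3.5}. Your proposal supplies a proof by transporting the template of the paper's hard-edge argument (Theorem~\ref{thm:limiting_hard}), and the mass-one inequality (reproducing identity, change of variables, Fatou) and the exterior decay bound (second normal derivative of $Q-\check{Q}$ equals $4\Delta Q$ at the soft edge, hence $nu(z_n)=2(\re\zeta)^2+o(1)$, Cauchy--Schwarz) are both carried out correctly and match the standard argument.

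However, the cocycle/amplitude step contains a genuine error. You take $\mathcal{Q}$ to be a holomorphic function whose real part agrees with $Q$ \emph{to first order} at $z_0$, and assert
\[
\tfrac{n}{2}\bigl(\mathcal{Q}(z_n)+\overline{\mathcal{Q}(w_n)}-Q(z_n)-Q(w_n)\bigr)
=-\tfrac12(|\zeta|^2+|\eta|^2)+i\,g_n(\zeta,\eta)+\bigO(n^{-1/2}).
\]
With $q=z_n-z_0=e^{i\theta}\zeta/\sqrt{n\Delta Q(z_0)}=\bigO(n^{-1/2})$, the Taylor expansion of $Q$ is
\[
Q(z_n)=Q(z_0)+2\re[\partial Q(z_0)q]+\re[\partial^2 Q(z_0)q^2]+\Delta Q(z_0)|q|^2+\bigO(|q|^3),
\]
and the harmonic polynomial $\re[\partial^2 Q(z_0)q^2]$ is of the \emph{same order} $n^{-1}$ as the term $\Delta Q(z_0)|q|^2$ that produces the Gaussian. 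First-order matching leaves this term behind, so the exponent is
\[
-\tfrac12|\zeta|^2-\tfrac12|\eta|^2
-\tfrac12\re\!\Big[\tfrac{\partial^2 Q(z_0)e^{2i\theta}}{\Delta Q(z_0)}\zeta^2\Big]
-\tfrac12\re\!\Big[\tfrac{\partial^2 Q(z_0)e^{2i\theta}}{\Delta Q(z_0)}\eta^2\Big]
+\text{(imaginary)}+\bigO(n^{-1/2}),
\]
i.e.\ the error is $\bigO(1)$, not $\bigO(n^{-1/2})$, and the factor $e^{-\frac12\re[a\zeta^2]-\frac12\re[a\eta^2]}$ cannot be absorbed into a unimodular cocycle nor into a Hermitian-entire factor, so the decomposition $K=\mathbf{K}\,e^{-\frac12(|\zeta|^2+|\eta|^2)}$ with $\mathbf{K}$ Hermitian-entire fails. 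The same problem affects $Q(z,\overline{z_0})$: since $Q(z,\overline{z_0})=Q(z_0)+\partial Q(z_0)q+\tfrac12\partial^2 Q(z_0)q^2+\cdots$, its real part agrees with $Q$ only to order zero at $z_0$ (the coefficient of $\re[\partial Q(z_0)q]$ is $1$, not $2$). What you need is $\re\mathcal{Q}-Q=\Delta Q(z_0)|q|^2+\bigO(|q|^3)$, i.e.\ agreement of the full $2$-jet, which is achieved by the \emph{quadratic} holomorphic Taylor polynomial $\mathcal{Q}(z)=Q(z_0)+2\partial Q(z_0)(z-z_0)+\partial^2 Q(z_0)(z-z_0)^2$ (equivalently $2Q(z,\overline{z_0})-Q(z_0)$). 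With that choice the residual imaginary exponent is of the form $h_n(\zeta)-h_n(\eta)$ with $h_n$ real, hence absorbable into a cocycle, and the rest of your argument goes through.
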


Theorem \ref{thm:limiting_soft} gives an embedding theorem in a 
similar way to what we saw in the Hard edge case.
Let $F^2(\C)$ be the Fock space of entire functions with finite norm
$$
\| f\|_{F^2(\C)}^2 = \int\limits_{\C} |f(\zeta)|^2  e^{-|\zeta|^2} dA(\zeta) < +\infty.
$$
The Fock space $F^2(\C)$ is isometrically isomorphic to $L^2(\R)$ 
via the Bargmann transform $\mathcal{B}:L^2(\R)\to F^2(\C)$ given by
$$
\mathcal{B} [F](\zeta) = \frac{1}{(2\pi)^{1/4}}\int\limits_{\R} e^{\zeta t} 
e^{-\zeta^2/2} e^{-t^2/4} F(t) dt. 
$$

We consider the space $\mathcal{H}=\mathcal{H}_\C$ of 
entire functions with finite Fock space norm
$$
\|f\|_{F^2(\C)}^2 = \int\limits_{\C} |f(\zeta)|^2 e^{-|\zeta|^2} dA(\zeta) <+\infty
$$ 
and with the growth restriction
$$
|f(\zeta) e^{\zeta^2/2}| \leq C, \quad \re \zeta \geq 0.
$$

We recall the definition of the entire function
\[
\Phi(z)=\frac12\operatorname{erfc}\left(\frac{z}{\sqrt{2}}\right)
=\frac{1}{\sqrt{2\pi}}\int_z^\infty e^{-t^2/2}dt.
\]

\begin{thm} 
\label{thm:erfc-space} 
We have that \(\mathcal{H}_\C=\mathcal{B}[L^2(-\infty,0)]\) and 
hence Hermitian-entire function 
$H(\zeta,\eta)=e^{\zeta \bar{\eta}} \Phi(\zeta +\bar{\eta})$ 
is the reproducing kernel of the space $\mathcal{H}_\C$.
\end{thm}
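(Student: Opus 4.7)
The plan is to follow the same two-step strategy as in the proof of Theorem~\ref{thm:space-A}: first identify the space $\mathcal{H}_{\C}$ with the image $\mathcal{B}[L^2(-\infty,0)]$ using a Paley-Wiener-Schwartz argument applied to the factor $f(\zeta)e^{\zeta^2/2}$, and then compute the reproducing kernel by projecting the Fock reproducing kernel $G(\zeta,\eta)=e^{\zeta\bar{\eta}}$ onto this subspace and evaluating an elementary Gaussian integral.

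The inclusion $\mathcal{B}[L^2(-\infty,0)]\subseteq \mathcal{H}_{\C}$ is routine. For $F\in L^2(\R)$ supported on $(-\infty,0]$ and $\re\zeta\ge 0$,
\[
\mathcal{B}[F](\zeta)\hspace{1pt}e^{\zeta^2/2}=\frac{1}{(2\pi)^{1/4}}\int_{-\infty}^0 e^{\zeta t}e^{-t^2/4}F(t)\,dt,
\]
and since $|e^{\zeta t}|\le 1$ on the range of integration, Cauchy-Schwarz yields a uniform bound $\le C\|F\|_{L^2(-\infty,0)}$. For the reverse inclusion, given $f\in\mathcal{H}_{\C}$ write $f=\mathcal{B}[F]$ with $F\in L^2(\R)$ (possible since $\mathcal{H}_{\C}\subset F^2(\C)$) and set $U(\zeta):=f(\zeta)e^{\zeta^2/2}$. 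By hypothesis $U$ is entire with $|U(\zeta)|\le C$ for $\re\zeta\ge 0$. The rotated function $\tilde{U}(\zeta):=U(-i\zeta)$ is entire and bounded for $\im\zeta\ge 0$, so Theorem~\ref{thm:PWS} yields a distribution $v$ supported on $(-\infty,0]$ with $\tilde{U}=\widehat{v}$; unraveling the rotation, $U(\zeta)=v_t(e^{t\zeta})$. On the other hand, the definition of $\mathcal{B}$ provides the alternative representation
\[
U(\zeta)=\int_{\R}\tilde{F}(t)e^{t\zeta}\,dt,\qquad \tilde{F}(t):=\frac{1}{(2\pi)^{1/4}}e^{-t^2/4}F(t)\in L^2(\R)\subset\mathcal{S}'.
\]
Fourier inversion in $\mathcal{S}'$ (exactly as used in the proof of Theorem~\ref{thm:space-A}) then forces $v=\tilde{F}$, and since $e^{-t^2/4}>0$ and $v$ is supported on $(-\infty,0]$, so is $F$. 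The main obstacle is precisely this step: correctly rotating the half-plane so that Theorem~\ref{thm:PWS} applies and then matching the two integral representations of $U$ to extract the support information for $F$.

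With $\mathcal{H}_{\C}=\mathcal{B}[L^2(-\infty,0)]$ in hand, the orthogonal projection $F^2(\C)\to\mathcal{H}_{\C}$ corresponds under $\mathcal{B}$ to multiplication by $1_{(-\infty,0)}$. The Fock reproducing kernel satisfies $G(\cdot,\eta)=\mathcal{B}[k_\eta]$ with
\[
k_\eta(t)=\frac{1}{(2\pi)^{1/4}}e^{\bar{\eta}t-\bar{\eta}^2/2-t^2/4},
\]
so the reproducing kernel of $\mathcal{H}_{\C}$ is
\[
H(\zeta,\eta)=\mathcal{B}\big[1_{(-\infty,0)}k_\eta\big](\zeta)=\frac{e^{-\zeta^2/2-\bar{\eta}^2/2}}{\sqrt{2\pi}}\int_{-\infty}^0 e^{(\zeta+\bar{\eta})t-t^2/2}\,dt.
\]
Completing the square in the exponent and substituting $s=\zeta+\bar{\eta}-t$ turns the integral into $\sqrt{2\pi}\hspace{1pt}e^{(\zeta+\bar{\eta})^2/2}\Phi(\zeta+\bar{\eta})$, and the prefactors collapse to give $H(\zeta,\eta)=e^{\zeta\bar{\eta}}\Phi(\zeta+\bar{\eta})$, as claimed.
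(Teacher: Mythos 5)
Your proof is correct, and the identification $\mathcal{H}_\C=\mathcal{B}[L^2(\R_-)]$ follows the same route as the paper: Paley--Wiener--Schwartz applied to $f(\zeta)e^{\zeta^2/2}$, followed by matching the two Fourier-type representations via uniqueness in $\mathcal{S}'$ (you additionally write out the rotation of the half-plane and the easy inclusion $\mathcal{B}[L^2(\R_-)]\subset\mathcal{H}_\C$, both of which the paper leaves implicit). The only departure is at the end: the paper cites \cite{HaimiHedenmalm}*{Proposition~5.4} for the formula $H(\zeta,\eta)=e^{\zeta\bar\eta}\Phi(\zeta+\bar\eta)$, whereas you derive it directly by conjugating multiplication by $1_{\R_-}$ through $\mathcal{B}$ and computing the resulting Gaussian integral; this is a self-contained and fully acceptable alternative, and it is exactly the computation the paper itself performs later for the soft/hard-edge space in Theorem~\ref{thm:hard-edge-erfc-space}.
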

\begin{proof} We have already that $\mathcal{H}_\C$ is a subspace of $F^2(\C)$. 
Hence any $f\in\mathcal{H}_\C$ is given by $f= \mathcal{B}[F]$ with $F\in L^2(\R_-)$. 
From the Paley-Wiener-Schwartz theorem and the growth constraint on functions in 
$\mathcal{H}_\C$ it follows that $f$ is the Bargmann transform of a distribution 
\(u\) supported in $\mathcal{\R}_-$. The same argument as in the proof of 
Theorem~\ref{thm:space-A} then shows that \(u=F\), so that in fact 
$\mathcal{H}_\C=\mathcal{B}[L^2(\R_-)]$. It follows from 
\cite{HaimiHedenmalm}*{Proposition~5.4} that 
$e^{\zeta\bar{\eta}} \Phi(\zeta +\bar{\eta})$ is the reproducing kernel 
of $\mathcal{B}[L^2(\R_-)]$ and so also of $\mathcal{H}_\C$. 
\end{proof}

By combining Theorems~\ref{thm:limiting_soft} and \ref{thm:erfc-space} we obtain 
the following corollary. The proof is directly parallel with the proof of 
Corollary~\ref{cor:upper_bound_hard} in the hard-edge case
and we omit the details.

\begin{cor}\label{cor: upper_bound_soft}
Any limiting kernel $\mathbf{K}_\zeta$ from Theorem \ref{thm:limiting_soft} 
belongs to the space $\mathcal{H}_\C$ and satisfies 
$\mathbf{K}(\zeta,\zeta)\leq {\bf H}(\zeta,\zeta)$. 
\end{cor}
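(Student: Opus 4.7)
The plan is to mimic the hard-edge argument from Corollary~\ref{cor:upper_bound_hard} almost verbatim, replacing the pair $(\mathcal{A},B)$ by $(\mathcal{H}_\C,{\bf H})$. First I would check that any limiting kernel ${\bf K}_\zeta(\cdot)={\bf K}(\cdot,\zeta)$ actually lies in $\mathcal{H}_\C$. This is where both conclusions of Theorem~\ref{thm:limiting_soft} are used simultaneously. The mass-one inequality gives
\[
\|{\bf K}_\zeta\|_{F^2(\C)}^2=\int_{\C}|{\bf K}(\eta,\zeta)|^2 e^{-|\eta|^2} dA(\eta)\le {\bf K}(\zeta,\zeta)<\infty,
\]
so ${\bf K}_\zeta\in F^2(\C)$, while the exterior decay estimate is literally the defining growth bound of $\mathcal{H}_\C$. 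Together these two facts say precisely that ${\bf K}_\zeta\in\mathcal{H}_\C$.

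Next, since Theorem~\ref{thm:erfc-space} identifies ${\bf H}$ as the reproducing kernel of $\mathcal{H}_\C$, the standard extremal characterization gives
\[
{\bf H}(\zeta,\zeta)=\sup_{f\in\mathcal{H}_\C\setminus\{0\}}\frac{|f(\zeta)|^2}{\|f\|_{F^2(\C)}^2}.
\]
Applying this with the test function $f={\bf K}_\zeta\in\mathcal{H}_\C$ together with the norm bound from the previous step yields
\[
{\bf H}(\zeta,\zeta)\ge \frac{|{\bf K}(\zeta,\zeta)|^2}{\|{\bf K}_\zeta\|_{F^2(\C)}^2}\ge \frac{{\bf K}(\zeta,\zeta)^2}{{\bf K}(\zeta,\zeta)}={\bf K}(\zeta,\zeta),
\]
which is the desired inequality.

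There is no real obstacle here; the work was already done in setting up Theorem~\ref{thm:limiting_soft} (Hermitian-entire extension, mass-one inequality, exterior decay) and in identifying the Faddeeva kernel as the reproducing kernel of $\mathcal{H}_\C$ via the Paley--Wiener--Schwartz step in Theorem~\ref{thm:erfc-space}. The only thing worth noting is that ${\bf K}(\zeta,\zeta)\ge 0$, so dividing by it in the last step is legitimate (in the degenerate case ${\bf K}(\zeta,\zeta)=0$ there is nothing to prove). Since the argument is formally identical to the hard-edge version, the proof can simply refer back to Corollary~\ref{cor:upper_bound_hard} and omit the repeated details.
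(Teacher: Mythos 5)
Your proof is correct and is exactly the argument the paper has in mind: it explicitly states that the proof is ``directly parallel with the proof of Corollary~\ref{cor:upper_bound_hard}'' and omits the details, which you have supplied faithfully (membership in $\mathcal{H}_\C$ from the mass-one inequality plus the exterior decay bound, followed by the extremal characterization of the reproducing kernel).
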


\subsection{Limiting kernel soft/hard edge}

In \cite{AmeurHardEdge}*{Lemma~2.4} it was proven that Lemma \ref{lem:apriori_soft} 
also holds in the soft/hard-edge setting. With some minor modifications, the proof 
of Theorem \ref{thm:limiting_soft} then gives the following result.
We omit the proof.
 
\begin{thm}\label{thm:limiting_softhard} There exist cocycles $c_n(\zeta,\eta)$ 
such that $(c_n\cdot k_n)_{n=0}^{\infty}$ is a normal family. Any limit point 
takes the form \(K(\zeta,\eta)={\bf K}(\zeta,\eta)e^{-\frac12(|\zeta|^2+|\eta|^2)}\), 
where the Hermitian-entire function ${\bf K}(\zeta,\eta)$ satisfies the 
\enquote{mass-one inequality}
$$
\int\limits_{\mathbb{L}} |\mathbf{K}(\zeta,\eta)|^2 
e^{-|\eta|^2} dA(\eta) \leq \mathbf{K}(\zeta,\zeta)
$$
and the exterior decay estimate
$$
|\mathbf{K}_\eta(\zeta) e^{\zeta^2/2} | \leq C_\eta, \quad \re \zeta \geq 0.
$$
\end{thm}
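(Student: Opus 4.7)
The plan is to run the proof of Theorem~\ref{thm:limiting_soft} essentially verbatim, with the single substantive change that the reproducing integral is now over \(S=\C\setminus\Omega\) instead of all of \(\C\). The required a priori bound \(K_n^\Omega(z,z)\le Cn\,e^{n(\check{Q}-Q)(z)}\) is exactly what \cite{AmeurHardEdge}*{Lemma~2.4} supplies, where \(\check{Q}\) is the usual unconstrained obstacle function; since \(\Omega=S^c\) and \(\sigma_Q\) already charges only \(S\), the constrained equilibrium measure coincides with \(\sigma_Q\) and no new potential-theoretic objects need to be introduced.

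I would first build the cocycle exactly as in the proof of Theorem~\ref{thm:limiting_hard}: extend \(\check{Q}\) harmonically across \(\partial S\) to a function \(V\), take a holomorphic function \(\mathcal{V}\) with \(\re\mathcal{V}=V\) on a neighborhood of \(\partial S\), and normalize \(k_n\) by \(A_n(\zeta,\eta)=\exp\!\big(\tfrac{n}{2}(\mathcal{V}(z_n)+\overline{\mathcal{V}(w_n)}-Q(z_n)-Q(w_n))\big)\). The soft-edge conditions \(V=Q\) and \(\nabla V=\nabla Q\) on \(\partial S\) force a Taylor expansion of the form
\[
\tfrac{n}{2}\bigl(\mathcal{V}(z_n)+\overline{\mathcal{V}(w_n)}-Q(z_n)-Q(w_n)\bigr)=-\tfrac{1}{2}(\zeta^2+\bar\eta^2+|\zeta|^2+|\eta|^2)+ig_n(\zeta,\eta)+O(n^{-1/2})
\]
with \(g_n\) real and linear in \((\zeta,\eta)\). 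Setting \(c_n=e^{-ig_n}\) makes \(c_nA_n\) converge locally uniformly, and combining the reproducing inequality \(|k_n(\zeta,\eta)|^2\le k_n(\zeta,\zeta)k_n(\eta,\eta)\) with the a priori bound shows that \(\Psi_n=k_n/A_n\) is locally uniformly bounded, hence a normal family by Montel. Any Hermitian-analytic limit then factors as \(K=\mathbf{K}\,e^{-\frac12(|\zeta|^2+|\eta|^2)}\) with \(\mathbf{K}\) Hermitian-entire, as claimed.

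The mass-one inequality is the only step where the soft/hard structure actually makes a difference. The reproducing property of \(K_n^\Omega\) in \(L^2(S,dA)\) reads \(\int_S |K_n^\Omega(z_n,w)|^2 dA(w)=K_n^\Omega(z_n,z_n)\), and under the change of variables \(w=z_0+e^{i\theta}\eta/\sqrt{n\Delta Q(z_0)}\) this becomes \(\int_{L_n}|k_n(\zeta,\eta)|^2\,dA(\eta)=k_n(\zeta,\zeta)\), where \(L_n\) is the rescaled image of \(S\). Because \(\partial S\) is real-analytic with outward normal \(e^{i\theta}\) at \(z_0\), the indicators \(\mathbf{1}_{L_n}\) converge pointwise a.e.\ to \(\mathbf{1}_{\mathbb{L}}\), and Fatou's lemma together with \(|c_n|=1\) produces the desired bound with \(\mathbb{L}\) as the domain of integration. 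This restriction from \(\C\) to \(\mathbb{L}\) is precisely the modification alluded to by the authors.

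Finally, for the exterior decay, a Taylor expansion of \(\check{Q}-Q\) along the outward normal at the soft-edge point \(z_0\) yields \(n(\check{Q}-Q)(z_n)=-2(\re\zeta)^2+O(n^{-1/2})\) for \(\re\zeta\ge 0\). The a priori bound then gives \(k_n(\zeta,\zeta)\le Ce^{-2(\re\zeta)^2}(1+o(1))\), Cauchy-Schwarz upgrades this to \(|K(\zeta,\eta)|\le C_\eta e^{-(\re\zeta)^2}\), and converting via \(\mathbf{K}=Ke^{(|\zeta|^2+|\eta|^2)/2}\) produces \(|\mathbf{K}_\eta(\zeta)e^{\zeta^2/2}|\le C_\eta\) for \(\re\zeta\ge 0\). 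Nothing in the argument is genuinely hard; the main conceptual point to verify carefully is that the indicator convergence \(\mathbf{1}_{L_n}\to\mathbf{1}_{\mathbb{L}}\) passes through Fatou without losing information, which the real-analyticity of \(\partial S\) ensures.
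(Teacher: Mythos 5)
Your proposal matches the paper's intended argument. The paper proves this result in one line, by citing \cite{AmeurHardEdge}*{Lemma~2.4} for the a priori bound \(K_n^\Omega(z,z)\le Cne^{n(\check Q-Q)(z)}\) and then saying \enquote{with some minor modifications, the proof of Theorem~\ref{thm:limiting_soft} [i.e.\ \cite{RescalingWardAKM}*{Theorem~3.5}] gives the result}, which is exactly the program you carry out: same cocycle normalization by \(A_n=e^{\frac n2(\mathcal V(z_n)+\overline{\mathcal V(w_n)}-Q(z_n)-Q(w_n))}\), same Montel/Cauchy--Schwarz normal-family argument, and the single substantive modification that the reproducing identity now integrates over \(S\), so that after rescaling the indicators converge to \(\mathbf 1_{\mathbb L}\) and Fatou yields the mass-one inequality over \(\mathbb L\) rather than over \(\C\). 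Your identification that \(\sigma_\Omega=\sigma_Q\) when \(\Omega=S^c\), so no balayage is involved, is a correct simplification, and the exterior decay via \(n(\check Q-Q)(z_n)\approx-2(\re\zeta)^2\) plus Cauchy--Schwarz is what the paper does for the soft edge. Two cosmetic caveats: the precise Taylor form you wrote, with the pure quadratic \(-\tfrac12(\zeta^2+\bar\eta^2)\) split off and \(g_n\) \enquote{linear}, isn't quite how the paper packages it (the paper keeps only the real part \(-(\re\zeta)^2-(\re\eta)^2\) explicit and lets \(g_n\) absorb \emph{all} imaginary terms, including quadratic ones coming from the generically complex coefficient \(\mathcal V''(z_0)-\partial^2Q(z_0)\)); and, strictly speaking, what you need from the Fatou step is a.e.\ pointwise convergence \(\mathbf 1_{L_n}\to\mathbf 1_{\mathbb L}\), which holds because the exceptional set is the imaginary axis. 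Neither affects the validity of the argument.
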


We denote by $F^2(\mathbb{L})$ the space of holomorphic functions on 
the left half-plane with finite norm
$$
\int\limits_{\mathbb{L}} |f(\zeta)|^2 e^{-|\zeta|^2} dA(\zeta)<+\infty,
$$
and define the space $\mathcal{H}_\mathbb{L}$ as the subspace 
of $F^2(\mathbb{L})$ of entire functions satisfying
$$
|f(\zeta)e^{\zeta^2/2}|\leq C , \quad \re \zeta \geq 0.
$$
We have the following characterization of \(\mathcal{H}_\mathbb{L}\).

\begin{thm}
\label{thm:hard-edge-erfc-space} The space $\mathcal{H}_\mathbb{L}$ 
coincides with $\mathcal{H}_\C$ as sets. The reproducing kernel of 
$\mathcal{H}_\mathbb{L}$ is given by 
${\bf H}_{\mathbb{L}}(\zeta,\eta)=e^{\zeta\bar{\eta}} \Psi(\zeta+\bar{\eta})$, 
where \(\Psi\) is given by \eqref{eq:soft/hard-bdry-fcn}. 
\end{thm}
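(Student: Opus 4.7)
The plan is to mirror the proof of Theorem~\ref{thm:erfc-space} by working throughout via the Bargmann transform, with two stages: first show the set equality $\mathcal{H}_{\mathbb{L}} = \mathcal{H}_{\C}$, then identify the reproducing kernel by pulling back the point evaluation through $\mathcal{B}$. The inclusion $\mathcal{H}_{\C} \subseteq \mathcal{H}_{\mathbb{L}}$ is immediate: any $f \in \mathcal{H}_{\C}$ has the required exterior growth bound by definition, and $\|f\|_{F^2(\mathbb{L})} \le \|f\|_{F^2(\C)}$ is finite.

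For the reverse inclusion, given $f \in \mathcal{H}_{\mathbb{L}}$ consider $g(\zeta) = f(\zeta) e^{\zeta^2/2}$, which is entire and bounded by $C$ on $\mathbb{H}$, and satisfies $\int_{\mathbb{L}} |g|^2 e^{-2(\re\zeta)^2}\, dA < \infty$ (using $|f|^2 e^{-|\zeta|^2} = |g|^2 e^{-2(\re\zeta)^2}$). The rotation $G(\zeta) = g(-i\zeta)$ gives an entire function bounded on the upper half-plane, so Theorem~\ref{thm:PWS} (with $c = 0$) yields a tempered distribution $u$ supported on $(-\infty, 0]$ with $G = \widehat{u}$. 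Undoing the rotation gives $g(\zeta) = \int u(t) e^{t\zeta}\, dt$. Plancherel on each horizontal line $\re\zeta = x < 0$ gives $\int_{\R}|g(x+iy)|^2\, dy = 2\pi \int |u(t)|^2 e^{2xt}\, dt$, and integrating over $x < 0$ against $e^{-2x^2}$ and swapping orders using the elementary Gaussian integral
\[
\int_{-\infty}^{0} e^{2xt - 2x^2}\, dx = \sqrt{\pi/2}\, e^{t^2/2}\Phi(t), \qquad t \le 0,
\]
produces the key identity
\[
\|f\|_{F^2(\mathbb{L})}^2 = \sqrt{2\pi}\int_{-\infty}^{0} |u(t)|^2 e^{t^2/2} \Phi(t)\, dt.
\]
Since $\Phi(t) \ge \tfrac{1}{2}$ on $\R_-$, the finiteness of the left-hand side forces $u \in L^2(\R_-, e^{t^2/2}\, dt)$. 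Setting $F(t) = (2\pi)^{1/4} u(t) e^{t^2/4}$ on $\R_-$ and $F \equiv 0$ on $\R_+$ places $F$ in $L^2(\R_-)$ with $f = \mathcal{B}[F]$, hence $f \in \mathcal{H}_{\C}$ by Theorem~\ref{thm:erfc-space}.

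The same identity exhibits $\mathcal{B}$ as an isometric isomorphism $L^2(\R_-, \Phi(t)\, dt) \to \mathcal{H}_{\mathbb{L}}$. To extract the reproducing kernel, write the point evaluation as
\[
\mathcal{B}[F](\eta) = \frac{1}{(2\pi)^{1/4}}\int_{-\infty}^{0} F(t) e^{\eta t - \eta^2/2 - t^2/4}\, dt = \int_{-\infty}^{0} F(t)\,\overline{G_\eta(t)}\,\Phi(t)\, dt,
\]
which forces $G_\eta(t) = (2\pi)^{-1/4} e^{\bar\eta t - \bar\eta^2/2 - t^2/4}/\Phi(t)$ on $\R_-$. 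Applying $\mathcal{B}$ to $G_\eta$ and completing the square in $t$ yields
\[
\mathbf{H}_{\mathbb{L}}(\zeta,\eta) = \mathcal{B}[G_\eta](\zeta) = e^{\zeta\bar\eta} \cdot \frac{1}{\sqrt{2\pi}}\int_{-\infty}^{0}\frac{e^{-(t-(\zeta+\bar\eta))^2/2}}{\Phi(t)}\, dt = e^{\zeta\bar\eta}\Psi(\zeta+\bar\eta),
\]
as claimed. The main obstacle lies in bootstrapping from the tempered distribution $u$ to the weighted $L^2$ statement, since the Plancherel identity above requires $e^{xt}u(t) \in L^2$ for $x < 0$, which is part of what we want to prove. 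This requires care: the cleanest route is to mollify $u$ by convolving with Schwartz functions supported in $\R_-$, establish the identity classically for the mollifications, and pass to the limit using monotone convergence on the spectral side and lower-semicontinuity of the $F^2(\mathbb{L})$-norm on the other.
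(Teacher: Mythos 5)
Your proof follows essentially the same route as the paper: Paley–Wiener–Schwartz to localize the spectrum on $\R_-$, Plancherel on horizontal lines, and identification of the reproducing kernel by pulling back point evaluation through the weighted Bargmann isometry. The one organizational difference is in how the set inclusion $\mathcal{H}_{\mathbb{L}}\subseteq\mathcal{H}_\C$ is closed: you derive the isometry identity $\|f\|_{F^2(\mathbb{L})}^2=\sqrt{2\pi}\int_{-\infty}^0|u(t)|^2 e^{t^2/2}\Phi(t)\,dt$ first and then use the lower bound $\Phi\ge\tfrac12$ on $\R_-$ to conclude $F=(2\pi)^{1/4}u\,e^{t^2/4}\in L^2(\R_-)$, whereas the paper first proves $\|f\|_{F^2(\C)}^2\le 2\|f\|_{F^2(\mathbb{L})}^2$ directly via the monotonicity of $e^{xt}$ in $x$ (yielding $N(|x|)\le N(-|x|)$) and only afterwards computes the isometry constant; your version is slightly more economical since the isometry is needed for the kernel anyway. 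The technical point you flag about extending the Fourier--Laplace representation across $\re\zeta<0$ is indeed a real subtlety that the paper's proof also passes over lightly, and your awareness of it is a plus, though the mollification route you sketch is not spelled out in the paper; the paper instead leans on the fact that once the distribution is known to be locally $L^2$ with the Gaussian weight for a single $x<0$, monotonicity of the weight propagates the bound to $x\ge 0$.
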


\begin{proof}
We begin by showing that $\mathcal{H}_\mathbb{L}\subset F^2(\C)$. 
Let $f$ be a function in $\mathcal{H}$. From the Paley-Wiener-Schwartz theorem it 
follows that $f$ is the Bargmann transform of a distribution $u$ supported on $\R_-$. 
It only remains to show that $u\in L^2(\R)$. Since $f\in F^2(\mathbb{L})$ we get from 
Fubini's theorem that $f(x+iy)e^{(x+iy)^2/2}$ is in $L^2(\R)$ as a function of $y$ 
for a.e.\ $x<0$. For such an $x$, we have that
$$
f(x+iy)e^{(x+iy)^2/2} = \int\limits_{-\infty}^0  e^{xt} e^{iyt}e^{-t^2/4} u(t) dt,
$$
with $e^{xt} e^{-t^2/4}u(t)\in L^2(\R)$ by Plancherel's theorem. From the support 
of $u$ we see that $e^{xt}$ is decreasing in $x$. This gives the inequality
\begin{align}
N(x):=\int_{-\infty}^\infty |f(x+iy)|^2e^{-x^2-y^2}dy 
&= e^{-2x^2}\int_{-\infty}^\infty |f(x+iy) e^{(x+iy)^2/2}|^2 dy\\
&=2\pi e^{-2x^2}\int_{-\infty}^0 e^{2xt} e^{-t^2/2}e^{-t^2/4} |u(t)|^2dt.
\end{align}
It follows \(N(x)<\infty\) for a.e.\ \(x\in \R\), and that \(N(|x|)\le N(-|x|)\).
In particular we obtain
$$
\|f\|_{F^2(\mathbb{C})}^2 =\frac{1}{\pi}\int\limits_{\R}N(x)dx 
\leq \frac{2}{\pi}\int\limits_{\R_{-}}N(x)dx=2 \|f\|_{F^2(\mathbb{L})}^2 <\infty.
$$
Since $f\in F^2(\C)$ we have $f = \mathcal{B}[F]$ for $F$ in $L^2(\R)$ 
and in the same way as above, we get that $u=F$. 

Next we show that $\mathcal{B}$ is an isometry
$$
\mathcal{B}:L^2(\R_-,\Phi(t) dt) \to {F}^2(\mathbb{L}).
$$ 
We compute the norm of $f=\mathcal{B}[F]$ where $F\in L^2(\R_-, \Phi(t)dt)$,
\begin{align*}
\int\limits_{\mathbb{L}} |f(\zeta)|^2 e^{-|\zeta|^2} dA(z) 
&= \int\limits_{-\infty}^{0} \int\limits_{-\infty}^{\infty} 
|\mathbf{B}[F](x+iy)|^2 e^{-x^2-y^2} \frac{dydx}{\pi}  \\
& = \int\limits_{-\infty}^{0} \int\limits_{-\infty}^{\infty} 
\Big\lvert (2\pi)^{-1/4}\int\limits_{-\infty}^{0} F(t) e^{zt} 
e^{-z^2/2} e^{-t^2/4} dt\Big\rvert^2 e^{-x^2-y^2}\frac{dydx}{\pi}\\
& =  \int\limits_{-\infty}^{0} \int\limits_{-\infty}^{\infty} \Big\lvert  
\int\limits_{-\infty}^{0} F(t) e^{zt}  e^{-t^2/4} dt\Big\lvert^2 e^{-2x^2}
\frac{dydx}{\sqrt{2\pi}\pi}.
\end{align*}
Using that $\int\limits_{-\infty}^{0} F(t) e^{zt} e^{-t^2/4} dt$ is the Fourier 
transform of the function $1_{\R_-}(t)F(t)e^{xt} e^{-t^2/4}$ and Plancherel's theorem we get
\begin{align*}
\int\limits_{\mathbb{L}} |f(\zeta)|^2 e^{-|\zeta|^2} dA(z) 
&= \int\limits_{-\infty}^{0} \int\limits_{-\infty}^{\infty} 
1_{\R_{-}}(y)|F(y)e^{xy}e^{-y^2/4}|^2 e^{-2x^2} 2\pi\frac{dydx}{\sqrt{2\pi}\pi} \\
&= \int\limits_{-\infty}^{0} |F(y)|^2 \int\limits_{-\infty}^{0} 
e^{-2x^2} e^{2xy} e^{-y^2/2} 2\pi\frac{dxdy}{\sqrt{2\pi}\pi}\\
&= \int\limits_{-\infty}^{0} |F(y)|^2 \Phi(y) dy,
\end{align*}
where the last step follows from a change of variables.

Let $L(\zeta,\eta) = L_\eta(\zeta)$ be the reproducing 
kernel of $\mathcal{B}[L^2(\R_-)]$ as a subset of ${F}^2(\mathbb{L})$ 
with norm and inner product from $L^2(\mathbb{L},e^{-|\zeta|^2}dA)$.
We get that 
$$
\langle \mathcal{B}[F],L_\eta \rangle_{{F}^2(\mathbb{L})} 
= \langle F,\mathcal{B}^{-1}[L_\eta] \rangle_{L^2(\R,\Phi(t) dt)},
$$  
for any function $F\in L^2(\R_-,\Phi(t)dt)$. 
From the definition of the Bargmann transform,
$$
\mathcal{B}^{-1}[L_\eta](t) = \frac{1}{(2\pi)^{1/4}} 1_{\R_-}(t) 
e^{t\bar{\eta}-\bar{\eta}^2/2-t^2/4} \frac{1}{\Phi(t)}. 
$$
If we apply the Bargmann transform to both sides we obtain 
\begin{align*}
L_\eta(\zeta) &= \frac{1}{\sqrt{2\pi}} \int\limits_{-\infty}^{0} e^{t(\zeta + \bar{\eta)}} 
e^{-\zeta^2/2 -\bar{\eta}^2/2 -t^2/2} \frac{1}{\Phi(t)} dt\\
&=e^{\zeta \bar{\eta}} \frac{1}{\sqrt{2\pi}} \int\limits_{-\infty}^{0} 
e^{-(t-\zeta -\bar{\eta})^2/2} \frac{1}{\Phi(t)} dt \\
&= e^{\zeta \bar{\eta}} \Psi(\zeta+\bar{\eta})={\bf H}_{\mathbb{L}}(\zeta,\eta),
\end{align*}
which is what we wanted to prove.   
\end{proof}

Similarly as in the soft-edge case, the following corollary follows by combining 
Theorems~\ref{thm:limiting_softhard} and \ref{thm:hard-edge-erfc-space}.
\begin{cor}\label{cor:upper_bound_soft_hard}
Any limiting kernel $\mathbf{K}_\eta$ from Theorem \ref{thm:limiting_softhard} 
belongs to the space $\mathcal{H}_\mathbb{L}$ and satisfies 
$\mathbf{K}(\zeta,\zeta)\leq {\bf H}_{\mathbb{L}}(\zeta,\zeta)$.
\end{cor}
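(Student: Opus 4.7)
The plan is to follow the same template as in Corollary~\ref{cor:upper_bound_hard}, replacing the hard-edge space \(\mathcal{A}\) and the Bergman-on-\(\mathbb{H}\) setup with the soft/hard space \(\mathcal{H}_{\mathbb{L}}\) and its reproducing kernel \({\bf H}_{\mathbb{L}}\) from Theorem~\ref{thm:hard-edge-erfc-space}.

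\textbf{Step 1: Membership \(\mathbf{K}_\eta\in \mathcal{H}_{\mathbb{L}}\).}\; Fix \(\eta\in\C\) and consider the entire function \(\zeta\mapsto \mathbf{K}(\zeta,\eta)\). The exterior decay estimate from Theorem~\ref{thm:limiting_softhard} says that \(|\mathbf{K}_\eta(\zeta)e^{\zeta^2/2}|\le C_\eta\) on the right half-plane, which is precisely the growth condition defining \(\mathcal{H}_{\mathbb{L}}\). The mass-one inequality from Theorem~\ref{thm:limiting_softhard} (which is stated over \(\mathbb{L}\) rather than all of \(\C\)) then gives
\[
\|\mathbf{K}_\eta\|_{F^2(\mathbb{L})}^2 = \int_{\mathbb{L}} |\mathbf{K}(\zeta,\eta)|^2 e^{-|\zeta|^2}dA(\zeta)\le \mathbf{K}(\eta,\eta),
\]
so \(\mathbf{K}_\eta\in F^2(\mathbb{L})\) and hence \(\mathbf{K}_\eta\in \mathcal{H}_{\mathbb{L}}\).

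\textbf{Step 2: Extremal inequality.}\; Since \({\bf H}_{\mathbb{L}}\) is the reproducing kernel of \(\mathcal{H}_{\mathbb{L}}\) (Theorem~\ref{thm:hard-edge-erfc-space}), it enjoys the extremal property
\[
{\bf H}_{\mathbb{L}}(\eta,\eta)=\sup_{f\in\mathcal{H}_{\mathbb{L}}\setminus\{0\}}\frac{|f(\eta)|^2}{\|f\|_{F^2(\mathbb{L})}^2}.
\]
Applying this with \(f=\mathbf{K}_\eta\) and combining with the norm bound from Step~1 yields
\[
{\bf H}_{\mathbb{L}}(\eta,\eta)\ge \frac{|\mathbf{K}(\eta,\eta)|^2}{\|\mathbf{K}_\eta\|_{F^2(\mathbb{L})}^2}\ge \frac{\mathbf{K}(\eta,\eta)^2}{\mathbf{K}(\eta,\eta)}=\mathbf{K}(\eta,\eta),
\]
which is the desired inequality. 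There is no essential obstacle here: the only thing that differs from the soft-edge case is that one integrates over the half-plane \(\mathbb{L}\) instead of all of \(\C\), and this is exactly matched by the fact that the extremal problem for \(\mathcal{H}_{\mathbb{L}}\) uses the \(F^2(\mathbb{L})\)-norm rather than the Fock norm. The details that might warrant attention are purely bookkeeping: verifying that \(\mathbf{K}(\eta,\eta)\ge 0\) (which follows from Hermitian positivity of any subsequential limit of correlation kernels) to legitimize the final division, and observing that the mass-one inequality of Theorem~\ref{thm:limiting_softhard} is precisely tailored to the half-plane norm appearing in \(\mathcal{H}_{\mathbb{L}}\).
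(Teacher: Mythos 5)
Your proof is correct and follows essentially the same route as the paper, which simply states that this corollary follows by combining Theorems~\ref{thm:limiting_softhard} and \ref{thm:hard-edge-erfc-space} and refers back to the hard-edge argument of Corollary~\ref{cor:upper_bound_hard}: membership in \(\mathcal{H}_{\mathbb{L}}\) from the exterior decay estimate plus the mass-one inequality (modulo Hermitian symmetry to flip the argument slot), then the extremal characterization of the reproducing kernel. Your remark on legitimizing the final division when \(\mathbf{K}(\eta,\eta)=0\) is well taken (the inequality being trivial in that degenerate case), and the whole argument is a faithful replication of the paper's intent.
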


\subsection{Approximate kernel and the lower bound}
\label{s:soft-edge-constr}
The construction of the approximate kernel will be almost the same in the soft 
and soft/hard edge cases and we will treat both cases simultaneously. Let $U$ 
be the unbounded component of $\C\setminus S$. In the proof we will assume that 
$z_0\in \partial U$. The other cases can be treated in more or less the same way. 

Let $\Gamma$ denote the boundary component of $U$ to which $z_0$ belongs. 
By assumption $\Gamma$ is real-analytic. We let $\mathcal{D}$ denote the 
unbounded component of $\C\setminus \Gamma$.

Let $\check{Q}$ be the solution to the obstacle problem on $\C$, that is the largest 
subharmonic function bounded above by $Q$ on $\C$ with growth at most 
$\log |z|^2 +\bigO(1)$ as $|z|\to \infty$. It is well known that the function 
$\check{Q}$ is harmonic in $U$.  We let $V$ be the harmonic continuation of 
$\check{Q}$ across $\partial U$ and define the holomorphic function 
$e^{\mathcal{V}}$ whose modulus equals $e^{V}$ and which is real-valued 
at \(z_0\). We can think of the function $\mathcal{V}$ as a multi-valued 
function defined in a neighborhood of $U$ with single-valued real part.

There exists however harmonic functions $\omega_n(z)$ in $U$ which vanish along 
$\Gamma$ and are equal to constants $\lambda_{j,n}\in [0,1]$ on all the other 
boundary components of $U$, such that the function 
$\Omega_n(z)=\omega_n(z) + i \widetilde{\omega}_n(z)$ has the opposite 
multi-valuedness of $\frac{n}{2}\mathcal{V}(z)$. That is, 
$e^{\Omega_n(z)+ \frac{n}{2}\mathcal{V}(z)}$ can be realized as a 
single-valued holomorphic function in a neighborhood of $U$. The 
construction of such functions is standard and we refer to \cite{bell2015cauchy} 
for the details.

In preparation for the construction of the approximate kernels, we want to 
construct a subharmonic function which is zero on the boundary $\Gamma$ and 
has normal derivative $\sqrt{\Delta Q}$ on $\Gamma$. We mimic the construction 
in the hard-edge setting and introduce a measure $\mu = \sqrt{\Delta Q} \,ds$, 
where $ds$ is arc-length measure on $\Gamma$ normalized by \(\frac1{2\pi}\). 

The following two lemmas are proven in the same way as 
Lemmas~\ref{lem:balayage-hard-edge} and \ref{greens_potential}, 
and we omit the details. 

\begin{lem} There exists a real-analytic Jordan curve $\Lambda$ in $\mathcal{D}$ 
and a measure $\nu$ with smooth density on $\Gamma$ such that $\widehat{\nu} = \mu$, 
where $\widehat{\nu}$ denotes the balayage of $\nu$ onto $\C\setminus \mathcal{D}$. 
\end{lem}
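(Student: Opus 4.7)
The plan is to mirror the construction of Lemma \ref{lem:balayage-hard-edge} from the hard-edge setting, interchanging the roles of the bounded and unbounded components of $\C\setminus\Gamma$. The key is to realize $\nu$ as the Neumann jump across $\Lambda$ of a suitable piecewise harmonic function $\widetilde H$ whose logarithmic potential coincides with $U^\mu$ on $\overline{\mathrm{Int}(\Gamma)}=\C\setminus\mathcal{D}$.

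First I would use the real-analyticity of $\Gamma$ and of the density $\sqrt{\Delta Q}$ to continue $U^\mu$ harmonically from $\overline{\mathrm{Int}(\Gamma)}$ across $\Gamma$, producing a harmonic function $H$ on an open neighborhood $N$ of $\overline{\mathrm{Int}(\Gamma)}$ that agrees with $U^\mu$ on $\mathrm{Int}(\Gamma)$. I would then choose a real-analytic Jordan curve $\Lambda\subset\mathcal{D}\cap N$, close enough to $\Gamma$ that $\overline{\mathrm{Int}(\Lambda)}\subset N$. The global function $\widetilde H$ is defined by
\[
\widetilde H(z)=\begin{cases} H(z), & z\in\overline{\mathrm{Int}(\Lambda)},\\ P^{\mathrm{ext}}_\Lambda[H](z), & z\in \mathrm{Ext}(\Lambda),\end{cases}
\]
where $P^{\mathrm{ext}}_\Lambda[H]$ denotes the harmonic function on the unbounded domain $\mathrm{Ext}(\Lambda)$ with continuous boundary values $H\vert_\Lambda$ and asymptotics $-|\mu|\log|z|+\bigO(1)$ at infinity; existence and uniqueness follow from standard potential theory on the Riemann sphere. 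By construction $\widetilde H$ is continuous on $\C$, harmonic off $\Lambda$, and has the prescribed logarithmic singularity at infinity. I then set
\[
d\nu(z)=-\Big[\frac{\partial \widetilde H}{\partial \mathbf{n}_i}(z)+\frac{\partial \widetilde H}{\partial \mathbf{n}_e}(z)\Big]ds(z),\qquad z\in\Lambda,
\]
and the real-analyticity of $\Lambda$, of $H$ and of the Dirichlet solution yield a real-analytic density.

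For positivity, note that taking formally $\Lambda=\Gamma$ would force $\widetilde H=U^\mu$, whose Neumann jump across $\Gamma$ is the strictly positive density $\sqrt{\Delta Q}$ of $\mu$. By continuous dependence of $\widetilde H$ and its one-sided normal derivatives on the choice of $\Lambda$, the jump is still strictly positive for $\Lambda$ sufficiently close to $\Gamma$, yielding the smooth positive density claimed. To identify $\widehat\nu=\mu$, I compare $U^\nu$ and $\widetilde H$: both are harmonic on $\C\setminus\Lambda$ and carry the identical logarithmic behavior $-|\mu|\log|z|$ at infinity, hence by uniqueness on the sphere (up to an additive constant, which we normalize away) they coincide on $\C$. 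On $\C\setminus\mathcal{D}=\overline{\mathrm{Int}(\Gamma)}\subset\overline{\mathrm{Int}(\Lambda)}$ we have $\widetilde H=H=U^\mu$, so $U^\nu=U^\mu$ there, which is precisely the defining property of the balayage $\widehat\nu=\mu$ onto $\partial\mathcal{D}$.

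The main — mild — obstacle is handling the behavior at infinity introduced by the unboundedness of $\mathcal{D}$: one must prescribe the Dirichlet solution on $\mathrm{Ext}(\Lambda)$ so that its logarithmic coefficient at infinity matches $|\mu|$ exactly. Only with this correct growth can $\widetilde H$ be realized as the potential of a measure of mass $|\mu|$ on $\Lambda$, ensuring that the resulting balayage is precisely $\mu$ and not $\mu$ corrected by a multiple of the harmonic measure from infinity. Beyond this, every step is a direct mirror of the bounded hard-edge argument.
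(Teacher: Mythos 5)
Your construction is the right one and mirrors what the paper intends (the paper omits this proof, citing the hard-edge version), but the final identification step contains a genuine gap tied to the unboundedness of $\mathcal{D}$ that you have only half-noticed. You assert that $U^\nu=\widetilde H$ after an ``additive constant, which we normalize away,'' but there is nothing to normalize: once the exterior Dirichlet data and the logarithmic growth rate at infinity are fixed, $\widetilde H$ is unique, and once $\widetilde H$ is fixed so are $\nu$ and hence $U^\nu$. The constant $U^\nu-\widetilde H$ is what it is, and it is \emph{not} zero. More seriously, you then invoke ``$U^\nu=U^\mu$ on $\C\setminus\mathcal{D}$ is precisely the defining property of the balayage $\widehat\nu=\mu$.'' That characterization is correct when the region being swept out is bounded (as for $\Omega$ in the hard-edge case), but when $\mathcal{D}$ is unbounded the balayage relation carries a constant: for $z\in\C\setminus\mathcal{D}$ one has $U^{\widehat\nu}(z)=U^\nu(z)+\int g_{\mathcal{D}}(\zeta,\infty)\,d\nu(\zeta)$, with strictly positive shift. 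So if your intermediate claim $U^\nu=U^\mu$ on $\C\setminus\mathcal{D}$ were true, you would in fact conclude $U^{\widehat\nu}=U^\mu+c>U^\mu$ there, i.e.\ $\widehat\nu\ne\mu$ -- the opposite of what you want. Two cancelling errors make your conclusion come out right, but the argument as written does not establish it.

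The correct way to close the gap is to compare $\widetilde H-U^\mu$ with the Green potential $U_\mathcal{D}^\nu(z)=\int g_\mathcal{D}(z,\zeta)\,d\nu(\zeta)$ on $\mathcal{D}$. Both functions vanish on $\Gamma$ (since $\widetilde H=H=U^\mu$ there and $g_\mathcal{D}(\cdot,\zeta)=0$ on $\Gamma$), both have distributional Laplacian equal to $-2\pi\nu$ in $\mathcal{D}$, and both have finite limits at infinity because $\widetilde H$ and $U^\mu$ share the leading $-|\mu|\log|z|$ term. The difference is therefore harmonic on $\mathcal{D}\cup\{\infty\}$ and vanishes on the full boundary $\Gamma$, hence is identically zero. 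In particular $\lim_{z\to\infty}\bigl(\widetilde H(z)-U^\mu(z)\bigr)=\int g_\mathcal{D}(\zeta,\infty)\,d\nu(\zeta)$, which is exactly the constant needed: combining this with the Green-potential identity $U_\mathcal{D}^\nu=U^\nu-U^{\widehat\nu}+\int g_\mathcal{D}(\cdot,\infty)\,d\nu$ yields $U^{\widehat\nu}=U^\mu$ on $\C\setminus\mathcal{D}$, and since $\widehat\nu$ and $\mu$ are both supported on $\Gamma$ with equal mass this forces $\widehat\nu=\mu$. Your remark about matching the log coefficient at infinity correctly ensures $|\nu|=|\mu|$, but that alone does not control the constant term, which is where the real content of the unbounded case lies.
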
 

\begin{lem}
\label{lem:greeb-pot-soft}
The Green's potential $U_\mathcal{D}^{\nu}$  is zero on $\Gamma$ and 
$\partial_{\mathbf{n}_i} U_\mathcal{D}^{\nu} = \sqrt{\Delta Q(z_0)}$. 
\end{lem}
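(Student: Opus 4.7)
The plan is to run the proof of Lemma~\ref{greens_potential} essentially verbatim, taking into account only that the domain $\mathcal{D}$ is now unbounded and that the density on $\Gamma$ is $\sqrt{\Delta Q}$ rather than $\rho$. Vanishing on $\Gamma$ is immediate from $g_\mathcal{D}(\cdot,\zeta)|_\Gamma=0$, which forces the integral defining $U_\mathcal{D}^\nu$ to vanish identically on $\Gamma$.

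For the normal derivative, I would first decompose
\[
U_\mathcal{D}^\nu(z)=U^\nu(z)-U^\mu(z),\qquad z\in\mathcal{D},
\]
by writing $g_\mathcal{D}(z,\zeta)=\log\tfrac1{|z-\zeta|}-h(z,\zeta)$ and noting that $\int h(z,\zeta)\,d\nu(\zeta)$ is harmonic in $\mathcal{D}$ with boundary values $U^\nu|_\Gamma$; the balayage identity $\widehat{\nu}=\mu$ identifies this harmonic function with $U^\mu$ on $\mathcal{D}$. As in the hard-edge case, the computation of the interior normal derivative then reduces to
\[
\partial_{\mathbf{n}_i}U_\mathcal{D}^\nu(z_0)=\bigl(\partial_{\mathbf{n}_i}U^\nu+\partial_{\mathbf{n}_e}U^\nu\bigr)(z_0)-\bigl(\partial_{\mathbf{n}_i}U^\mu+\partial_{\mathbf{n}_e}U^\mu\bigr)(z_0),
\]
once one uses that $U^\nu=U^\mu$ on $\C\setminus\mathcal{D}$ (the defining property of balayage) to equate the $\mathbf{n}_e$-derivatives on $\Gamma$.

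The two brackets are then evaluated by standard jump identities. Since $\nu$ is supported on $\Lambda\subset\mathcal{D}$ and bounded away from $\Gamma$, $U^\nu$ is smooth across $\Gamma$ and the first bracket vanishes. The second bracket is the classical single-layer jump for $U^\mu$ across $\Gamma$, which equals minus the density of $\mu$ with respect to the normalised arc-length $ds$; by our choice $\mu=\sqrt{\Delta Q}\,ds$ this density equals $\sqrt{\Delta Q(z_0)}$ at $z_0$. Combining yields $\partial_{\mathbf{n}_i}U_\mathcal{D}^\nu(z_0)=\sqrt{\Delta Q(z_0)}$.

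I do not expect any genuine obstacle: all the ingredients are classical potential-theoretic jump formulas, and the argument is an exact transcription of that of Lemma~\ref{greens_potential}. The only point to keep straight is that $\mathbf{n}_i$ now refers to the interior normal of the \emph{unbounded} component $\mathcal{D}$, which points outward from the droplet $S$; with this convention the positive sign of the answer is consistent with its hard-edge counterpart.
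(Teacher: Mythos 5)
Your argument is correct and is exactly what the paper intends: the paper omits the proof, stating that the lemma is "proven in the same way as Lemma~\ref{greens_potential}," and your transcription is faithful (vanishing from $g_\mathcal{D}|_\Gamma=0$; then $U_\mathcal{D}^\nu=U^\nu-U^\mu$, smoothness of $U^\nu$ across $\Gamma$, and the single-layer jump for $U^\mu$). The only cosmetic point is that in the unbounded domain $\mathcal{D}$ the decomposition $U_\mathcal{D}^\nu=U^\nu-U^\mu$ may a priori hold only up to an additive constant depending on the behaviour at infinity; since you derive vanishing on $\Gamma$ directly from the Green's function and a constant is irrelevant for the normal-derivative computation, this does not affect your proof.
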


In the same way as in the hard-edge case we define the probability 
measure $\lambda = \frac{\nu}{\nu(\Lambda)}$. We can find atomic probability 
measures $\lambda_j$ with atoms of equal size such that
$$
e^{jU_\mathcal{D}^\lambda(z)} = e^{jU_\mathcal{D}^{\lambda_j}(z)} (1 + \bigO(e^{-cj})).
$$ 

As in Section~\ref{s:hard-edge-constr}, we define holomorphic functions 
$e^{\mathcal{U}_{\lambda_j}}$ and \(e^{\mathcal{U}_{\lambda}(z)}\) on a 
neighborhood of \(\Gamma\) such that
$$
e^{\re \mathcal{U}_{\lambda_j}(z)} = e^{U_\mathcal{D}^{\lambda_j}(z)},
$$
and 
$$
e^{\re \mathcal{U}_{\lambda}(z)} = e^{U_\mathcal{D}^{\lambda}(z)}.
$$
By Remark~\ref{rem:hol-U-lambda-j}, the function \(e^{-j\mathcal{U}_{\lambda_j}(z)}\) 
is holomorphic on a full neighborhood of \(\mathcal{D}\), and we moreover have that
$$
e^{-j\mathcal{U}_{\lambda}(z)}  = (1 + \bigO(e^{-cj})) e^{-j\mathcal{U}_{\lambda_j}(z)}
$$
on a neighborhood of \(\Gamma\).

We now have to distinguish between the soft and soft/hard edge. 
Define the approximate kernel $K_n^{\sharp}(z,w) $ by
\begin{equation}
\label{eq:K-n-def-soft}
K_n^{\sharp}(z,w) = e^{\Omega_n(z)+\overline{\Omega_n(w)}} 
e^{\frac{n}{2}(\mathcal{V}(z) + \overline{\mathcal{V}(w)})} e^{-\frac{n}{2}(Q(z)+Q(w))} 
\sum\limits_{j=m_n}^{M_n}  h_{j,n} e^{-j(\mathcal{U}_{\lambda_j}(z)
+ \overline{\mathcal{U}_{\lambda_j}(w)})} .
\end{equation}
with $h_{j,n}$ given by
\begin{equation}\label{eq:h_soft}
h_{j,n} =  \sqrt{\frac{n}{2\pi}} u(j/\sqrt{n})
\frac{\Delta Q(z_0)}{\nu(\Lambda)}e^{-\frac{j^2}{2n\nu(\Lambda)^2}},
\end{equation}
where 
\[
\begin{cases}
u(t) =1&\text{(soft-edge case)}, \\
u(t)=\Phi(-t/\nu(\Lambda))^{-1}& \text{(soft/hard edge case)}.
\end{cases}	
\]
As in the hard-edge case we have some freedom when choosing 
$m_n$ and $M_n$. Let $m_n = \lfloor n^{1/4}\rfloor$. 
We let \(\delta_n=n^{-1/2}\log n\) and choose $M_n$ as 
sequence of integers such that 
\begin{equation}
\label{eq:def-mn-Mn-soft}
\frac{M_n}{\sqrt{n}} \to \infty , \qquad \frac{M_n}{\delta_n n}\to 0, 
\end{equation}
as $n\to\infty$. 

\begin{lem}\label{lem:diagonal_approx}  
In the soft-edge case, we have locally uniformly that 
$$
\lim\limits_{n\to \infty} \frac{c_n(\zeta,\eta) }{n\Delta Q(z_0)} 
K_n^{\sharp}(z_n,w_n) =  H(\zeta,\eta),
$$
where \(z_n\) and \(w_n\) are given by \eqref{eq:rescaling-soft}.
Furthermore, for $\zeta$ fixed we have
$$
\frac{1}{n\Delta Q(z_0)}\int\limits_{\mathbb{D}(z_0,\delta_n)} 
|K_n^{\sharp}(z_n,w)|^2 dA(w) \leq H(\zeta,\zeta)+o(1), 
$$
as $n\to\infty$.
\end{lem}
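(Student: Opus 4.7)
The plan is to mirror the hard-edge proof of Lemma~\ref{lem:local_rescale}, with the Riemann-sum approximation of a uniform density replaced by the Gaussian analogue dictated by the weights $h_{j,n}$ in \eqref{eq:h_soft}. As a first reduction, the exponential point-mass approximation parallel to \eqref{eq:approx_point} lets us replace $\mathcal{U}_{\lambda_j}$ by the smooth $\mathcal{U}_\lambda$ in \eqref{eq:K-n-def-soft}, introducing only a multiplicative factor $1+\bigO(e^{-cm_n})$.

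The local expansion near $z_0$ has two parts. From Lemma~\ref{lem:greeb-pot-soft} we compute $\mathcal{U}_\lambda'(z_0)=e^{-i\theta}\sqrt{\Delta Q(z_0)}/\nu(\Lambda)$, which gives
\[
\mathcal{U}_\lambda(z_n)+\overline{\mathcal{U}_\lambda(w_n)}=\frac{\zeta+\bar\eta}{\sqrt n\,\nu(\Lambda)}+\bigO(n^{-1}),
\]
up to a constant imaginary phase absorbed into $c_n$. For the smooth holomorphic prefactor, the second-order boundary matching $V=Q$, $\nabla V=\nabla Q$, and $\partial_\mathbf{n}^2(V-Q)(z_0)=-4\Delta Q(z_0)$, together with the multi-valuedness-compensating $\Omega_n$ and the cocycle construction underlying Theorem~\ref{thm:limiting_soft} (cf.\ \cite{RescalingWardAKM}), yields
\[
c_n(\zeta,\eta)\,e^{\Omega_n(z_n)+\overline{\Omega_n(w_n)}}\,e^{\frac n2(\mathcal{V}(z_n)+\overline{\mathcal{V}(w_n)}-Q(z_n)-Q(w_n))}=e^{-\frac12(\zeta^2+\bar\eta^2)-\frac12(|\zeta|^2+|\eta|^2)}\bigl(1+o(1)\bigr)
\]
locally uniformly, for an appropriate unimodular cocycle $c_n$.

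Setting $s_j=j/(\sqrt n\,\nu(\Lambda))$ and $\Delta s=1/(\sqrt n\,\nu(\Lambda))$, the sum factor in \eqref{eq:K-n-def-soft} divided by $n\Delta Q(z_0)$ reduces to the Gaussian Riemann sum
\[
\frac{1}{\sqrt{2\pi}}\sum_{j=m_n}^{M_n}\Delta s\,e^{-s_j^2/2-s_j(\zeta+\bar\eta)}.
\]
By \eqref{eq:def-mn-Mn-soft}, $s_{m_n}\to 0^+$ and $s_{M_n}\to\infty$, so completing the square yields locally uniform convergence to $\Phi(\zeta+\bar\eta)\,e^{(\zeta+\bar\eta)^2/2}$. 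Since $e^{(\zeta+\bar\eta)^2/2-\frac12(\zeta^2+\bar\eta^2)}=e^{\zeta\bar\eta}$, multiplication with the prefactor limit gives $H(\zeta,\eta)=\Phi(\zeta+\bar\eta)\,e^{\zeta\bar\eta-\frac12(|\zeta|^2+|\eta|^2)}$, as claimed.

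For the norm bound, change variables $w=z_0+e^{i\theta}\eta/\sqrt{n\Delta Q(z_0)}$ to convert the integral over $\mathbb{D}(z_0,\delta_n)$ into an integral over $|\eta|\leq\sqrt{n\Delta Q(z_0)}\delta_n$, and denote by $f_{n,\zeta}(\eta)$ the finite-range analogue of $H(\zeta,\eta)$ obtained by keeping $[s_{m_n},s_{M_n}]$ as the integration range in the Gaussian representation of $\Phi(\zeta+\bar\eta)\,e^{(\zeta+\bar\eta)^2/2}$. Using pointwise convergence $f_{n,\zeta}\to H(\zeta,\cdot)$ together with a dominated-convergence argument parallel to the hard-edge proof of Lemma~\ref{lem:local_rescale}, we obtain $\int_\C|f_{n,\zeta}(\eta)|^2\,dA(\eta)=H(\zeta,\zeta)+o(1)$, where the reproducing property of $\mathbf{H}$ in $\mathcal{H}_\C$ from Theorem~\ref{thm:erfc-space} gives the identification $\int_\C|H(\zeta,\eta)|^2\,dA(\eta)=H(\zeta,\zeta)$. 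The main technical step is the cocycle bookkeeping alluded to above, specifically ensuring that the non-extremal quadratic terms (in particular the $\bar\zeta^2$-piece of $\frac n2(\mathcal{V}(z_n)-Q(z_n))$ together with its conjugate counterpart from $\overline{\mathcal{V}(w_n)}-Q(w_n)$) admit a unimodular absorption into $c_n$; this is precisely the step addressed in \cite{RescalingWardAKM} underlying Theorem~\ref{thm:limiting_soft}.
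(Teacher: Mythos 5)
Your proposal is correct and follows essentially the same approach as the paper's own proof: point-mass approximation replacing $\mathcal{U}_{\lambda_j}$ by $\mathcal{U}_\lambda$, Taylor expansion of $\mathcal{U}_\lambda$ and of the prefactor to second order using $\nabla(V-Q)=0$ on $\Gamma$, absorption of phases into the cocycle, recognition of the sum as a Gaussian Riemann sum converging to $e^{(\zeta+\bar\eta)^2/2}\Phi(\zeta+\bar\eta)$, and a truncated-integral plus Fatou/dominated-convergence argument for the norm bound. The only cosmetic difference is that you record the quadratic exponent in the complex form $-\tfrac12(\zeta^2+\bar\eta^2)-\tfrac12(|\zeta|^2+|\eta|^2)$ while the paper writes it as $-(\re\zeta)^2-(\re\eta)^2$; as you correctly note, these differ by a purely imaginary cocycle factor and either choice reproduces $H(\zeta,\eta)$ after multiplication by the Gaussian-integral limit.
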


\begin{proof} 
First we get that
$$
\Omega_n(z_n) + \overline{\Omega_n(w_n)} = \bigO\Big(\frac{\|(\zeta,\eta)\|}{\sqrt{n}}\Big).
$$

Next, since $\nabla (V-Q) = 0$ on $\Gamma$, Taylor's formula gives the expansion
$$
\frac{n}{2}(\mathcal{V}(z_n)+ \overline{\mathcal{V}(w_n)}-Q(z_n) -Q(w_n)) 
= -(\re \zeta)^2 -(\re \eta)^2 + ig_n(\zeta,\eta) +\bigO\Big(\frac{\|(\zeta,\eta)\|^3}{\sqrt{n}}\Big),
$$
where $g_n(\zeta,\eta)$ is such that $e^{ig_n}$ is a cocycle. Another 
application of Taylor's formula to $\mathcal{U}_{\lambda}$ around the point $z_0$ gives
\begin{equation}
\label{eq:U-lambda-Taylor}
\mathcal{U}_\lambda(z_n) + \overline{\mathcal{U}_\lambda(w_n)} 
=  \frac{\zeta + \bar{\eta}}{\nu(\Lambda)\sqrt{n}} + \bigO\Big(\frac{\|(\zeta,\eta)\|^2}{n}\Big).
\end{equation}
Let $w\in \mathbb{D}(z_0,\delta_n)$ and write $w=z_0 + \frac{e^{i\theta}\eta}{n\rho(z_0)}$. 
Using that \(|\zeta|,|\eta|=\bigO(\log n)\) we get that
\begin{align*}
K_n^{\sharp}(z_n,w) & = e^{ig_n(\zeta,\eta)}\sum\limits_{j=m_n}^{M_n} h_{j,n} 
e^{-(\re \zeta)^2 -(\re \eta)^2} e^{-\frac{j}{\nu(\Lambda)\sqrt{n}}(\zeta +\bar{\eta})}
\Big(1+\bigO\Big(\frac{\log^3 n}{\sqrt{n}}\Big)\Big) \\ 
&= e^{ig_n(\zeta,\eta)}\sum\limits_{j=m_n}^{M_n} h_{j,n} e^{-(\re \zeta)^2 -(\re \eta)^2} 
e^{-\frac{j}{\nu(\Lambda)\sqrt{n}}(\zeta +\bar{\eta})}
+\bigO\big(\sqrt{n}\log^3 n\big).
\end{align*}
A Riemann sum approximation using the definition \eqref{eq:h_soft} of \(h_{j,n}\) gives that
\begin{align*}
&\frac{1}{n\Delta Q(z_0)}\sum\limits_{j=m_n}^{M_n} h_{j,n} 
e^{-(\re \zeta)^2 -(\re \eta)^2} e^{-\frac{j}{\nu(\Lambda)\sqrt{n}}(\zeta +\bar{\eta})}\\
&=  \frac{1}{\sqrt{2\pi}}\frac{1}{\nu(\Lambda)}\int\limits_{\frac{m_n}{\sqrt{n}}}^{\frac{M_n}{\sqrt{n}}} 
e^{-(\re \zeta)^2 -(\re \eta)^2} e^{-t\frac{(\zeta +\bar{\eta})}{\nu(\Lambda)}} 
e^{- \frac{t^2}{2\nu(\Lambda)^2}} dt + \bigO(\delta_n)\\
&= \frac{1}{\sqrt{2\pi}}\int\limits_{\frac{m_n}{\nu(\Lambda)\sqrt{n}}}^{\frac{M_n}{\nu(\Lambda)\sqrt{n}}} 
e^{-(\re \zeta)^2 -(\re \eta)^2} e^{-t(\zeta +\bar{\eta})} e^{- \frac{t^2}{2}} dt + \bigO(\delta_n),
\end{align*}
where we have used that the derivative of
$$e^{-(\re \zeta)^2 -(\re \eta)^2} e^{-t(\zeta +\bar{\eta})} e^{- t^2/2}$$
is uniformly bounded by $\bigO(\eta)$ for 
$t\in I_n =[\frac{m_n}{\nu(\Lambda)\sqrt{n}},\frac{M_n}{\nu(\Lambda)\sqrt{n}}]$.
Rewriting the last integral gives that
$$
\frac{1}{n\Delta Q(z_0)} K_n(z_n,w) c_n(\zeta,\eta) 
= f_{n,\zeta}(\eta) + \bigO\Big(\frac{\log^3 n}{\sqrt{n}}\Big), 
$$
where the function $f_{n,\zeta}$ be given by 
$$
f_{n,\zeta} (\eta) = G(\zeta,\eta) \frac{1}{\sqrt{2\pi}}
\int\limits_{\frac{m_n}{\nu(\Lambda)\sqrt{n}}}^{\frac{M_n}{\nu(\Lambda)\sqrt{n}}} 
e^{-(\zeta + \bar{\eta} +t)^2/2} dt.
$$    

The first claim of the theorem is now obvious since $\frac{M_n}{\sqrt{n}} \to \infty$ 
and $\frac{m_n}{\sqrt{n}} \to 0$ as $n\to \infty$. With the notation 
\(w(\eta)=z_0+e^{i\theta}\eta/\sqrt{n\Delta Q(z_0)}\), the second claim follows from
\begin{align}
\frac{1}{n\Delta Q(z_0)}\int\limits_{\mathbb{D}(z_0,\delta_n)} |K_n^{\sharp}(z_n,w)|^2 dA(w) 
&=  \frac{1}{(n\Delta Q(z_0))^2} \int\limits_{\mathbb{D}(z_0,\sqrt{\Delta Q(z_0)}\log n)} 
|K_n^{\sharp}(z_n,w(\eta))|^2 dA(\eta)\\
& = \int\limits_{\mathbb{D}(z_0,\sqrt{\Delta Q(z_0)}\log n)} |f_\zeta(\eta)|^2 dA(\eta) + o(1)\\
&\leq \int\limits_{\C} |H(\zeta,\eta)|^2 dA(\eta) + o(1)= H(\zeta,\zeta) + o(1),
\end{align}
as $n\to \infty$, where the asymptotic inequality follows from Fatou's lemma. 
\end{proof}

\begin{lem}\label{lem:diagonal_approx_softhard}  
In the soft/hard-edge case, we have locally uniformly in $\mathbb{H}$ that
$$
\lim\limits_{n\to \infty} \frac{c_n(\zeta,\eta) }{n\Delta Q(z_0)}K_n^{\sharp}(z_n,w_n) 
= H_{\mathbb{L}}(\zeta,\eta)
$$
where \(z_n\) and \(w_n\) are given by \eqref{eq:rescaling-soft}.
Furthermore, for $\zeta$ fixed we have  that
$$
\frac{1}{n\Delta Q(z_0)}\int\limits_{\mathbb{D}(z_0,\delta_n)} 
|K_n^{\sharp}(z_n,w)|^2 1_{S}(w) dA(w) \leq H_{\mathbb{L}}(\zeta,\zeta) + o(1),
$$
as $n\to\infty$. 
\end{lem}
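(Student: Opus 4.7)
My plan is to transcribe the proof of Lemma~\ref{lem:diagonal_approx} verbatim up through the asymptotic analysis of the individual factors, modifying only the integrand produced by the Riemann sum and the $\eta$-domain of integration in the norm bound.

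First I would reuse the same Taylor expansions near $z_0$: for $\Omega_n(z_n) + \overline{\Omega_n(w_n)}$, for $\frac{n}{2}(\mathcal{V}(z_n) + \overline{\mathcal{V}(w_n)} - Q(z_n) - Q(w_n))$ (producing $-(\re\zeta)^2 - (\re\eta)^2$ together with a cocycle phase $ig_n$ absorbed into $c_n$), and for $\mathcal{U}_\lambda(z_n) + \overline{\mathcal{U}_\lambda(w_n)}$ via \eqref{eq:U-lambda-Taylor}. All three depend only on the local geometry near $z_0$ and are unaffected by the switch to $u(t) = \Phi(-t/\nu(\Lambda))^{-1}$. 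Inserting these expansions and using $|\zeta|, |\eta| = \bigO(\log n)$, I would arrive at
\[
\frac{c_n(\zeta,\eta)}{n\Delta Q(z_0)}\,K_n^{\sharp}(z_n, w) = f_{n,\zeta}(\eta) + \bigO\!\left(\frac{\log^3 n}{\sqrt{n}}\right),
\]
where
\[
f_{n,\zeta}(\eta) = G(\zeta,\eta)\,\frac{1}{\sqrt{2\pi}} \int_{m_n/(\nu(\Lambda)\sqrt{n})}^{M_n/(\nu(\Lambda)\sqrt{n})} \Phi(-t)^{-1}\, e^{-(\zeta+\bar\eta+t)^2/2}\,dt.
\]
The substitution $s = -t$, together with $m_n/\sqrt{n} \to 0$ and $M_n/\sqrt{n} \to \infty$, then yields locally uniformly
\[
f_{n,\zeta}(\eta) \longrightarrow G(\zeta,\eta)\,\Psi(\zeta + \bar\eta) = H_{\mathbb{L}}(\zeta,\eta),
\]
proving the first claim. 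Integrability against the Gaussian is immediate because $\Phi$ is bounded below on $(-\infty,0]$ (tending to $1$ at $-\infty$ and equal to $\tfrac12$ at $0$), so $\Phi(-t)^{-1}$ stays bounded on the integration range.

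For the norm estimate I would observe that the affine rescaling $w \mapsto \eta$ sends $\mathbb{D}(z_0,\delta_n)\cap S$ to a set $H_n \subset \mathbb{D}(0,\sqrt{\Delta Q(z_0)}\,\log n)$ with $1_{H_n}(\eta) \to 1_{\mathbb{L}}(\eta)$ pointwise, since $e^{i\theta}$ is the outer unit normal to $S$ at $z_0$ and $\partial S$ is real-analytic there. Combined with the pointwise convergence $f_{n,\zeta} \to H_{\mathbb{L}}(\zeta,\cdot)$, Fatou's lemma yields
\[
\limsup_{n\to\infty} \frac{1}{n\Delta Q(z_0)}\int_{\mathbb{D}(z_0, \delta_n)} |K_n^{\sharp}(z_n, w)|^2\,1_S(w)\, dA(w) \leq \int_{\mathbb{L}} |H_{\mathbb{L}}(\zeta,\eta)|^2\,dA(\eta) = H_{\mathbb{L}}(\zeta,\zeta),
\]
where the final equality is the reproducing property of $H_{\mathbb{L}}$ in $F^2(\mathbb{L})$ established in Theorem~\ref{thm:hard-edge-erfc-space}.

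The only substantive departure from the soft-edge case is bookkeeping: the new factor $\Phi(-t)^{-1}$ in the Riemann sum, together with the restriction $1_S$ in the norm (which becomes $1_{\mathbb{L}}$ in the limit), are precisely what is needed to produce the reproducing kernel and norm of the subspace $\mathcal{H}_{\mathbb{L}}$ rather than those of $\mathcal{H}_{\C}$. No new analytic obstacle arises; all of the technical lemmas about $\Omega_n$, $\mathcal{V}$, and $\mathcal{U}_\lambda$ carry over unchanged, and the truncation at $\partial S$ interacts cleanly with the tailored weight $u$.
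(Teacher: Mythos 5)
Your first claim (the pointwise convergence) tracks the paper's argument essentially word for word, and your treatment of the Riemann sum, the replacement of $h_{j,n}$ with the $\Phi(-t/\nu(\Lambda))^{-1}$ weight, and the identification of the limit with $H_{\mathbb{L}}$ are all correct.

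For the norm bound, however, there is a genuine gap. You assert that $1_{H_n}\to 1_{\mathbb{L}}$ pointwise and that together with $f_{n,\zeta}\to H_{\mathbb{L}}(\zeta,\cdot)$, \emph{Fatou's lemma} yields the desired upper bound. But Fatou gives the inequality $\int \liminf g_n \leq \liminf \int g_n$, i.e.\ a \emph{lower} bound on the limit of integrals, which is the wrong direction here. Turning pointwise convergence into an upper bound on $\limsup_n \int |f_{n,\zeta}|^2 1_{H_n}$ requires either a dominating function in $L^1$ (reverse Fatou/dominated convergence), or a separate estimate on the mass that $|f_{n,\zeta}|^2 1_{H_n}$ can carry off to regions where the limit vanishes. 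Neither is established in your argument. In particular, the region where $1_{H_n}=1$ but $1_{\mathbb{L}}=0$ --- the curvature strip between the rescaled boundary $\partial S$ and the tangent line $\re\eta=0$ --- is not controlled.

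The paper handles precisely this point explicitly: it first shows the curvature-strip contribution is negligible using the crude bound $|K_n^\sharp(z_n,w)|^2 \lesssim n^2$ together with the fact that $(\mathbb{D}(z_0,\delta_n)\cap S)\setminus\{\eta\in\mathbb{L}\}$ has area $\bigO(\delta_n^3)$, so that $\frac{1}{n}\cdot n^2\cdot\delta_n^3 = o(1)$; only then does it estimate the remaining integral over $\eta\in\mathbb{L}$. You should insert this two-step decomposition before invoking the convergence of $f_{n,\zeta}$. (As an aside, the final inequality $\int_{\mathbb{L}}|f_{n,\zeta}|^2\le\int_{\mathbb{L}}|H_{\mathbb{L}}(\zeta,\cdot)|^2 + o(1)$ is best seen not via Fatou but via the Bargmann isometry, which converts the left-hand side into an increasing truncated integral of the positive quantity $\Phi(-t)^{-1}e^{2t\re\zeta - t^2/2}$; the paper's Fatou reference is itself a shorthand for this monotone structure, so matching the paper's language is not the issue --- omitting the strip estimate is.)
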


\begin{proof}
Repeating the argument from Lemma~\ref{lem:diagonal_approx} verbatim, 
but using the soft/hard-edge definition of \(h_{j,n}\) in
\eqref{eq:h_soft}, we obtain
\[
\frac{e^{-ig_n(\zeta,\eta)}}{n\Delta Q(z_0)} K_n^{\sharp}(z_n,w)=
 \frac{1}{\sqrt{2\pi}}\int\limits_{\frac{m_n}{\nu(\Lambda)\sqrt{n}}}^{\frac{M_n}{\nu(\Lambda)\sqrt{n}}} 
\Phi(-t)^{-1}e^{-(\re \zeta)^2 -(\re \eta)^2} e^{-t(\zeta +\bar{\eta})} e^{-\frac{t^2}{2}} dt
+ \bigO\Big(\frac{\log^3 n}{\sqrt{n}}\Big).
\]   
Rewriting the integral gives
$$
\frac{c_n(\zeta,\eta) }{n\Delta Q(z_0)} K_n(z_n,w) = f_{n,\zeta}(\eta) 
+ \bigO\Big(\frac{\log^3 n}{\sqrt{n}}\Big), 
$$
where the function $f_{n,\zeta}$ be given by 
$$
f_{n,\zeta} (\eta) = G(\zeta,\eta) \frac{1}{\sqrt{2\pi}}
\int\limits_{\frac{m_n}{\nu(\Lambda)\sqrt{n}}}^{\frac{M_n}{\nu(\Lambda)\sqrt{n}}} 
\Phi(-t)^{-1} e^{-(\zeta + \bar{\eta} +t)^2/2} dt.
$$
The locally uniform convergence now follows since $\frac{M_n}{\sqrt{n}} \to \infty$ 
and $\frac{m_n}{\sqrt{n}} \to 0$ as $n\to \infty$.

We now turn to the norm estimate. Just as in the hard-edge case, the contribution 
from $w=w(\eta)\in \mathbb{D}(z_0,\delta_n)$ such that $\eta\in \mathbb{H}$
is negligible. This follows since we have that $|K_n(z,w)|^2 \leq Cn^2$, so we get
$$
\frac{1}{n\Delta Q(z_0)}\int\limits_{\mathbb{D}(z_0,\delta_n)} |K_n^{\sharp}(z_n,w)|^2 
1_S(w) 1_{\mathbb{H}}(\eta) dA(w) \lesssim \frac{1}{n} n^2 \delta_n^3  =o(1),
$$
as $n\to\infty$. The remaining part of the integral can be estimated as in the soft-edge case, and we get 
\begin{align}
\frac{1}{n\Delta Q(z_0)}\int\limits_{\mathbb{D}(z_0,\delta_n)} |K_n^{\sharp}(z_n,w)|^2 
1_S(w) 1_{\mathbb{L}}(\eta) dA(w) &\leq  \int\limits_{\mathbb{L}} |f_{n,\zeta}(\eta)|^2 dA(\eta) + o(1)\\
&\leq  \int\limits_{\mathbb{L}} |H_{\mathbb{L}}(\zeta,\eta)|^2 dA(\eta) + o(1)\\
&= H_{\mathbb{L}}(\zeta,\zeta) + o(1),
\end{align}
as $n\to \infty$.  This completes the proof. 
\end{proof}

It remains to show that the approximate kernel is small outside the $\delta_n$-neighborhood 
of the boundary point $z_0$. We denote by $\chi$ a smooth cut-off function equal to $1$ on 
a neighborhood of the unbounded component of \(\C\setminus S\). 
We require that this neighborhood is sufficiently small that the kernel 
\(K_n^\sharp(z,w)\) is well-defined wherever \(\chi\) does not vanish.

The following lemma applies to both the soft and soft/hard-edge kernels. The proof is inspired by
\cite{AmeurCronvall}.

\begin{lem}\label{lem:off_diag_softhard} For fixed $\zeta \in \C$ we have
$$
\frac{1}{n\Delta Q(z_0)}\int\limits_{\C \setminus \D(z_0,\delta_n)} 
|K_n^{\sharp}(z_n,w) \chi(w)|^2 dA(w) =
\mathrm{o}(1),
$$ 
as $n \to \infty$.
\end{lem}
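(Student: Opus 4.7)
The strategy parallels the hard-edge analogue (Lemma~\ref{lem:off_diag_hard}), but replaces the geometric-series bound used there by a Gaussian-Fourier analysis tailored to the weights \(h_{j,n}\). Since \(\chi\) vanishes outside a fixed neighborhood of \(\overline{\mathcal{D}}\) and since \(V-Q\) is uniformly negative on any compact subset of \(\mathcal{D}\setminus\Gamma\), the crude bound \(|K_n^{\sharp}(z,w)|^2\lesssim n^2 e^{n(V-Q)(w)}\) makes the contribution from \(\{w\in\operatorname{supp}\chi:d(w,\Gamma)\ge\epsilon\}\) exponentially small. The analysis therefore reduces to \(w\) in a one-sided \(\epsilon\)-neighborhood of \(\Gamma\) with \(|w-z_0|\ge\delta_n\); we use local coordinates \(w=z_0+e^{i\theta}(r+is)\).

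Set \(A=\mathcal{U}_\lambda(z_n)+\overline{\mathcal{U}_\lambda(w)}\). Substituting \(t=j/\sqrt n\), approximating the sum by a Riemann integral and completing the square in \(j\) yields
\[
S_n(z_n,w)\;:=\;\sum_{j=m_n}^{M_n} h_{j,n} e^{-jA}\;\approx\; Cn\, e^{\frac{n\nu(\Lambda)^2 A^2}{2}}\int_{I_n}u(t)\,e^{-\frac{(t+\nu(\Lambda)^2\sqrt n\,A)^2}{2\nu(\Lambda)^2}}\,dt,
\]
with \(I_n=[m_n/\sqrt n,M_n/\sqrt n]\). The key estimate is that this oscillatory Gaussian integral is \(\bigO(1)\), rather than the \(\bigO\bigl(e^{n\nu(\Lambda)^2(\im A)^2/2}\bigr)\) that a naive modulus-inside-the-integrand bound would produce. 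This is obtained by a contour shift \(t\mapsto t-i\nu(\Lambda)^2\sqrt n\,\im A\), with vertical boundary contributions at the endpoints of \(I_n\) controlled by the choice \eqref{eq:def-mn-Mn-soft}; equivalently, the sum is a (truncated) Gaussian Fourier transform in \(\phi=\im A\) whose Plancherel dual decays like \(e^{-n\nu(\Lambda)^2\phi^2/2}\). Consequently \(|S_n(z_n,w)|^2\le Cn^2\,e^{n\nu(\Lambda)^2\re(A^2)}\).

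Combining with \(e^{n(V-Q)(w)}\) and Taylor-expanding near \(z_0\) — using \(\mathcal{U}_\lambda(z_0)=0\), \(\mathcal{U}_\lambda'(z_0)e^{i\theta}=\sqrt{\Delta Q(z_0)}/\nu(\Lambda)\) from Lemma~\ref{lem:greeb-pot-soft}, and \(\partial^2_{\mathbf{n}}(V-Q)(z_0)=-4\Delta Q(z_0)\) — the cross-cancellation \(\re(A^2)=(\re A)^2-(\im A)^2\approx (\Delta Q(z_0)/\nu(\Lambda)^2)(r^2-s^2)\) against \((V-Q)(w)\approx -2\Delta Q(z_0)\,r^2\) yields
\[
n\nu(\Lambda)^2\re(A^2)+n(V-Q)(w)\;\le\;-cn|w-z_0|^2,
\]
and hence \(|K_n^{\sharp}(z_n,w)|^2\le Cn^2 e^{-cn|w-z_0|^2}\) uniformly on \(\operatorname{supp}\chi\) near \(z_0\). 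Integrating in polar coordinates around \(z_0\),
\[
\int_{|w-z_0|\ge\delta_n} n^2\,e^{-cn|w-z_0|^2}\,dA(w)\le \frac{\pi n}{c}\,e^{-cn\delta_n^2}=\bigO\bigl(n\cdot n^{-c\log n}\bigr),
\]
which after division by \(n\Delta Q(z_0)\) is \(o(1)\). The main obstacle is the Fourier cancellation described above: without it, the \(e^{n\nu(\Lambda)^2(\im A)^2/2}\) spurious growth cancels exactly the tangential decay \(e^{-n\Delta Q(z_0)s^2}\) hidden in \(\re(A^2)\). A similar cancellation, for the true correlation kernel, is used in \cite{AmeurCronvall}.
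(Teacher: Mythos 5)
The proposal takes a genuinely different route from the paper (which uses summation by parts on the truncated sum and estimates the partial sums $B_j$ pointwise via the geometric-series formula), but there is a genuine gap in your key estimate.

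The central claim is that the truncated Gaussian integral
\[
\int_{I_n}u(t)\,e^{-\frac{(t+\nu(\Lambda)^2\sqrt{n}\,A)^2}{2\nu(\Lambda)^2}}\,dt,
\qquad I_n=\big[m_n/\sqrt{n},\,M_n/\sqrt{n}\big],
\]
is $\bigO(1)$, so that $|S_n|^2\lesssim n^2 e^{n\nu(\Lambda)^2\re(A^2)}$ and consequently $|K_n^{\sharp}(z_n,w)|^2\lesssim n^2 e^{-cn|w-z_0|^2}$. This is false. The lower truncation point $m_n/\sqrt{n}\approx n^{-1/4}$ is essentially at the origin, and the contour shift $t\mapsto t-i\nu(\Lambda)^2\sqrt{n}\,\im A$ generates a vertical segment there whose contribution does \emph{not} carry the Gaussian suppression. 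Concretely, writing $A=\alpha+i\beta$ and taking $\alpha\approx 0$ for simplicity, the truncated integral is
\[
e^{n\nu(\Lambda)^2\beta^2/2}\int_{I_n} e^{-t^2/(2\nu(\Lambda)^2)}\,e^{-i\sqrt{n}\,\beta\,t}\,dt,
\]
and a single integration by parts gives
\[
\int_{I_n}e^{-t^2/(2\nu(\Lambda)^2)}e^{-i\sqrt{n}\beta t}\,dt
=\frac{e^{-m_n^2/(2n\nu(\Lambda)^2)}e^{-i\beta m_n}}{i\sqrt{n}\,\beta}
+\text{(smaller)},
\]
so the truncated integral has modulus $\asymp e^{n\nu(\Lambda)^2\beta^2/2}/(\sqrt{n}\,|\beta|)$ for $|\beta|\gg n^{-1/2}$, which is exponentially large, not $\bigO(1)$. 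After reinstating the prefactor, what remains is
\[
|S_n(z_n,w)|\ \lesssim\ n\,e^{n\nu(\Lambda)^2\re(A^2)/2}\ +\ \frac{\sqrt{n}}{|\im A|}\,,
\]
and it is the second term that dominates precisely in the tangential regime you need to handle. Thus your final pointwise bound $|K_n^{\sharp}(z_n,w)|^2\lesssim n^2e^{-cn|w-z_0|^2}$ fails: the actual tangential decay is of order $1/s^2$, not Gaussian.

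This is essentially the same cancellation that the paper obtains, but in a form you cannot ignore: their summation-by-parts argument controls $B_j=(1-\Theta^{j+1})/(1-\Theta)$ by $|1-\Theta|^{-1}\asymp |z-w|^{-1}$, producing exactly the $1/|z-w|^2$ factor, combined with the Gaussian in $j$ coming from $a_j=h_{j,n}$. The resulting estimate is $\bigO(\sqrt{n}/\delta_n)=\bigO(n/\log n)=o(n)$, not superpolynomially small. Your weaker (but correct) bound $n/s^2\cdot e^{-cn r^2}$ happens to close the estimate as well — $\int_{|w-z_0|\ge\delta_n} n\,s^{-2}e^{-cnr^2}\,dA \lesssim \sqrt{n}/\delta_n = n/\log n$ — so the lemma's conclusion survives, but only because $\delta_n=n^{-1/2}\log n$ is chosen precisely so that $\delta_n\sqrt{n}\to\infty$; with the Gaussian bound you claim, any $\delta_n\gtrsim n^{-1/2}$ would work, which it does not. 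Two further issues in the soft/hard case: $u(t)=\Phi(-t/\nu(\Lambda))^{-1}$ has poles at the complex zeros of $\operatorname{erfc}$, so the contour shift needs justification; and the Riemann-sum-to-integral approximation of $S_n$ carries its own error that must be tracked alongside the endpoint terms.
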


\begin{proof} Let $z = z_n$. Note that we can write
$$
\int\limits_{\C \setminus \D(z_0,\delta_n)} |K_n^{\sharp}(z,w) \chi(w)|^2 dA(w) 
=  \int\limits_{\C \setminus \D(z_0,\delta_n)} \chi(w) |S_1(z,w)|^2 
e^{n(V-Q)(z)}e^{n(V-Q)(w)} dA(w),
$$
where $S_1$ is given by
$$
S_1(z,w)= \sum\limits_{j=m_n}^{M_n} h_{j,n}e^{-j(\mathcal{U}_\lambda(z) 
+ \overline{\mathcal{U}_\lambda(w)})}.
$$
In order to estimate the above integral, we define 
$$
a_j = h_{j,n} \quad \text{and}\quad b_j=b_j(z,w)= e^{-j(\mathcal{U}_\lambda(z_n) 
+ \overline{\mathcal{U}_\lambda(w)})},
$$
where $h_{j,n}$ is defined in \eqref{eq:h_soft}. We now have
$$
S_1= \sum\limits_{j=m_n}^{M_n} a_j b_j,
$$
and summation by parts gives 
\begin{equation}\label{sum_by_parts_softhard}
S_1= a_{M_n} B_{M_n} - a_{m_n} B_{m_n-1} -\sum\limits_{j=m_n}^{M_n-1} (a_{j+1}-a_j) B_j,
\end{equation}
where $B_j = \sum\limits_{k=0}^j b_k$. The sum $B_j$ can be written as
$$
B_j = \frac{1-\Theta^{j+1}}{1-\Theta},
$$
where $\Theta = \Theta(z,w) =e^{-\mathcal{U}_\lambda(z)-\overline{\mathcal{U}_\lambda(w)}}$. 
To estimate the integral in the statement of the lemma we will estimate the integral 
of each of the terms in \eqref{sum_by_parts_softhard}. We will start by considering 
integrals of the form
  $$
I_j = \int\limits_{\mathcal{N}_n}  | B_j|^2  e^{n(V-Q)(z)}e^{n(V-Q)(w)} dA(w),
$$
where $\mathcal{N}_n$ is the intersection of $\C\setminus \mathbb{D}(z_0,\delta_n)$ 
with a small fixed neighborhood \(\mathcal{N}\) of $\Gamma$. 
To estimate $I_j$ we integrate over the level curves $\Gamma_t$ where 
$(V-Q)(w)=-t^2/2$ with $t\in(-\epsilon,\epsilon)$. 
For \(w\in \Gamma_t\) with \(t\) small, there exists 
a unique closest point \(w_0\in \partial S\) to \(w\), 
and an application of Taylor's formula shows that
\[
w=w_0+\frac{e^{i\theta}t}{2\sqrt{\Delta Q(z_0)}}+\bigO(t^2),\quad w\in \Gamma_t
\]
as \(t\to 0\).
By Lemma~\ref{lem:greeb-pot-soft} we thus find that
\[
\re \mathcal{U}_\lambda(w)=-\frac{t}{2\nu(\Lambda)}+\bigO(t^2),\quad w\in\Gamma_t
\]
as \(t\to 0\). From the local asymptotics of $\mathcal{U}_\lambda$ 
we have that $|e^{-j\mathcal{U}_\lambda(z_n)}| \leq C_\zeta$,
and hence we obtain the bound
$$
|\Theta|^j \leq C_\zeta e^{-j\frac{t}{2\nu(\Lambda)} + Cjt^2}, \quad w\in \Gamma_t.
$$ 

In addition, we have the elementary bound
$$
|\Theta-1|^2 \gtrsim |z-w|^{2},
$$
and combining this with the above we find that
\begin{align*}
I_j \lesssim \int\limits_{\Gamma_t} &1_{\{|z-w|>\delta_n\}} (w) |z-w|^{-2} ds_t(w) 
\int\limits_{-\epsilon}^{\epsilon} e^{-\frac{tj}{\nu(\Lambda)} + 2Ct^2 j} e^{-nt^2/2} dt,
\end{align*}
where \(ds_t\) denotes the arc-length measure on \(\Gamma_t\).
The first integral is of order $\bigO(\frac{1}{\delta_n})$. For the second integral we have
\begin{align*}
\int\limits_{-\epsilon}^{\epsilon} e^{-\frac{tj}{\nu(\Lambda)} + 2Ct^2 j} e^{-nt^2/2} dt 
&= \frac{1}{\sqrt{n}} \int\limits_{-\epsilon \sqrt{n}}^{\epsilon \sqrt{n}} e^{2Cjt^2/n} 
e^{-\frac{tj}{\nu(\Lambda)\sqrt{n}}} e^{-t^2/2} dt \\
&\lesssim  \frac{1}{\sqrt{n}} \int\limits_{-\epsilon \sqrt{n}}^{\epsilon \sqrt{n}}   
e^{\frac{j^2}{2\nu(\Lambda)^2 n}}  e^{ -\frac12(\frac{j}{\nu(\Lambda)\sqrt{n}} + t)^2} dt\\
&\lesssim \frac{1}{\sqrt{n}} e^{\frac{j^2}{2\nu(\Lambda)^2 n}}.
\end{align*}
Combining the two estimates we obtain $I_j \lesssim \frac{1}{\delta_n \sqrt{n}} 
e^{\frac{j^2}{2\nu(\Lambda)^2 n}}$. Using that 
$a_j^2 \lesssim n e^{-\frac{j^2}{\nu(\Lambda)^2n}}$ we find that
$$
\int\limits_{\mathcal{N}_n}  a_j^2 |B_j|^2 e^{n(V-Q)(z)}e^{n(V-Q)(w)} dA(w) 
\lesssim  a_j^2 I_j  \lesssim \frac{\sqrt{n}}{\delta_n} e^{-\frac{j^2}{2\nu(\Lambda)^2n}} = o(n),
$$ 
which shows that the integral of the absolute value squared of the first two terms 
in \eqref{sum_by_parts_softhard} are of the right order. It remains to estimate the 
integral of the absolute value squared of the sum in \eqref{sum_by_parts_softhard}. 
To begin with, from Cauchy-Schwarz we have
\begin{align}
\left\vert\sum\limits_{j=m_n}^{M_n-1} (a_{j+1}-a_j) B_j \right\vert^2 &
\lesssim M_n \sum\limits_{j=m_n}^{M_n-1} (a_{j+1}-a_j)^2 |B_j|^2 .
\end{align}
Using that $|a_{j+1}-a_j| \lesssim a_j (\frac{j}{n} + \frac{1}{\sqrt{n}}),$ we get 
\begin{align}
M_n \sum\limits_{j=m_n}^{M_n-1} (a_{j+1}-a_j)^2 I_j &\lesssim M_n  
\sum\limits_{j=m_n}^{M_n-1} \Big(\frac{j}{n} + \frac{1}{\sqrt{n}}\Big)^2 a_j^2 I_j \\
& \lesssim M_n \int\limits_0^\infty (1 + s)^2  
e^{-\frac{s^2}{2\nu(\Lambda)}}  ds  \cdot  \frac{1}{\delta_n}  \\
&\lesssim \frac{M_n}{\delta_n}  = o(n),
\end{align}
where the last step uses the definition \eqref{eq:def-mn-Mn-soft} of \(M_n\).

Outside the neighborhood \(\mathcal{N}\), the contribution to the 
norm of \(1_{\C\setminus\D(0,\delta_n)}K_n^\sharp(z_n,\cdot)\) will be small.
This is somewhat non-trivial to show, as the approximate kernel may have some 
\(L^2\)-mass distributed around distant boundary components of \(S\), i.e., 
around \(\partial S\setminus\Gamma\). However, this mass will be of a lower 
order than \(\bigO(n)\). To see this, we note that
$$
| e^{-j\mathcal{U}_{\lambda_j}(w)} |  = e^{-jU_{\mathcal{D}}^{\lambda_j}(w)}.
$$
For $w\in\mathcal{D}$ outside some fixed neighborhood of $\Gamma$ and $\zeta\in\Lambda$, 
we have the uniform lower bound $g_{\mathcal{D}}(w,\zeta)\ge c>0$. It follows that
\[
| e^{-j\mathcal{U}_{\lambda_j}(w)} | =  \bigO(e^{-cj}).
\]
This gives that \(|S_1|=\bigO(\sqrt{n})\), and hence 
\begin{align}
\int_{\C\setminus\mathcal{N}}|\chi(w)\, K_n^\sharp(z_n,w)|^2 dA(w)
&=\int_{\C\setminus\mathcal{N}}|\chi(w)\, S_1(z_n,w)|^2 e^{n(V-Q)(z_n)+n(V-Q)(w)}dA(w)\\ 
&\lesssim n\int_{\C\setminus\mathcal{N}}\chi^2(w) e^{n(V-Q)(z_n)+n(V-Q)(w)}dA(w)=\bigO(\sqrt{n}).
\end{align}
This completes the proof.
\end{proof}

\subsection{Correction to polynomial}
We want to show that the approximate kernel $K_n^{\sharp}(\cdot,w)$ can 
be approximated in a suitable way by weighted polynomials.
We will consider points $w = w_n$, and try to find a function $u_w$ such that 
\begin{equation}\label{eq:correction_soft}
\chi(z)K_n^{\sharp}(z,w) e^{\frac{n}{2}Q(z)} - u_w(z)
\end{equation}
is a polynomial of degree at most $n-1$. The correction $u_we^{-\frac{n}{2}Q}$ 
should be small both in norm and pointwise near $z_0$. 

If we can show that
\begin{equation}\label{eq:norm_soft_der}
\int\limits_{\C} \left|\bar\partial_z \Big( K_n^{\sharp}(z,w)e^{\frac{n}{2}Q(z)} 
\chi(z) \Big) \right|^2 e^{-nQ(z)} dA(z) = \bigO(e^{-cn}),
\end{equation}
for some $c>0$ a Hörmander estimate will give the existence of $u_w$ such that
$$
\|u_w\|_{\mathcal{W}_n}^2 = \bigO(e^{-c'n}), 
$$
for some $c'>0$ and $u_w(z) = \bigO(z^{n-1})$ as $|z|\to \infty$. Since 
$K_n^{\sharp}(z,w)e^{\frac{n}{2}Q(z)} = \bigO(z^{n-1})$ as $|z|\to\infty$ 
it will follow that \eqref{eq:correction_soft} defines a polynomial of degree at most $n-1$.

The derivative in \eqref{eq:norm_soft_der} is non-zero only where the 
complex gradient $\bar\partial_z \chi(z) \neq 0$. It follows that it is enough to estimate the integral
$$
\int\limits_{N} |K_n^{\sharp}(z,w)|^2 dA(z),
$$
where $N = \{z : \bar\partial_z \chi(z) \neq 0\}$. From the definition of $K_n^{\sharp}(z,w)$ 
and Cauchy-Schwarz's inequality (for sums) 
we get 
$|K_n^{\sharp}(z,w) |^2 \leq K_n^{\sharp}(z,z) K_n^{\sharp}(w,w)$ and 
from local estimates from before $K_n^{\sharp}(w_n,w_n) = \bigO(n^2)$.

Let us first consider the part of $N$ in the component of $S$ to 
which $z_0$ belongs. We use the estimate 
$$
K_n^{\sharp}(z,z) \lesssim M_n \max_{m_n\leq j \leq M_n}
e^{-2jU_\mathcal{D}^{\lambda}(z)} e^{n(V-Q)(z)}
$$
First note that 
$$
e^{n(V-Q)(z)} = \bigO(e^{-cn}),
$$
uniformly on $N$. Since $j <  M_n =\bigO(\sqrt{n}\log n)$ we get that
$$
K_n^{\sharp}(z,z) = \bigO(e^{-c'n})
$$
for some other positive constant \(c'>0\).
On the other components of \(S\) the estimates are even simpler, since we have
$$
e^{-2jU_\mathcal{D}^{\lambda_j}(z)}  = \bigO(e^{-cj }),  
$$
uniformly for $z\in\mathcal{D}$ such that $\text{dist}(z,\partial \mathcal{D})>d>0$. 
This finalizes the proof of the estimate \eqref{eq:norm_soft_der}, 
and the existence of suitable corrections \(u_w\) follows.

\subsection{Proof of Theorem \ref{thm:main-soft-edge} and 
Theorem \ref{thm:main-soft-hard-edge}}

The proofs will now be almost identical to the proof of 
Theorem~\ref{thm:main-hard-edge}. We will outline the steps 
for Theorem \ref{thm:main-soft-edge}. The corresponding steps for the 
soft/hard edge are written in parenthesis. In the soft/hard-edge case, 
$H$ should be replaced by $H_{\mathbb{L}}$.

Let $K(\zeta,\eta)$ be a limit point of the rescaled kernel along 
the subsequence $\{n_k\}_k$. From Corollary \ref{cor: upper_bound_soft} 
(Corollary \ref{cor:upper_bound_soft_hard}) 
we get that $K(\zeta,\zeta)\leq H(\zeta,\zeta)$.

Since $K_n$ is the reproducing kernel of the space $\mathcal{W}_n$, we have that
$$
K_n(z,z) = \sup\limits_{f\in\mathcal{W}_n\setminus \{0\}} 
\frac{|f(z)|^2 }{\phantom{a}\| f\|_{\mathcal{W}_n}^2}.
$$ 
We have proven that $K_n^{\sharp}\chi-u_{z_n}$ is a polynomial of degree at most $n-1$. 
Since $u_{z_n}e^{-\frac{n}{2}Q}$ is small both pointwise near $z_n$ and in norm we get
\begin{equation}\label{eq_pol_rep_soft}
K_n(z_n,z_n) \geq (1+o(1))\frac{K_n^{\sharp}(z_n,z_n)^2}{\phantom{i}
\|K_n^{\sharp}(\cdot,z_n)\chi(\cdot)\|_{\mathcal{W}_n}^2}.
\end{equation}
From Theorem \ref{thm:limiting_soft} (Theorem \ref{thm:limiting_softhard}) 
and Lemma \ref{lem:diagonal_approx}  (Lemma \ref{lem:diagonal_approx_softhard}) we have that
$$
\lim\limits_{k\to\infty} \frac{1}{n_k\Delta Q(z_0)} K_{n_k}(z_{n_k},z_{n_k}) = K(\zeta,\zeta)
$$
and
$$
\lim\limits_{k\to\infty} \frac{1}{n_k\Delta Q(z_0)} 
K_{n_k}^{\sharp}(z_{n_k},z_{n_k}) = H(\zeta,\zeta).
$$
Lemma \ref{lem:diagonal_approx} (Lemma \ref{lem:diagonal_approx_softhard}) 
and Lemma \ref{lem:off_diag_softhard} gives that
$$
\lim\limits_{k\to \infty} \frac{1}{n_k\Delta Q(z_0)}
\|K_{n_k}^{\sharp}(\cdot,z_{n_k})\chi(\cdot)\|_{\mathcal{W}_n}^2 \leq H(\zeta,\zeta).
$$
Using these properties together with \eqref{eq_pol_rep_soft} gives the lower bound
$$
K(\zeta,\zeta)\geq H(\zeta,\zeta).
$$
We get that any limiting kernel $K(\zeta,\eta)$ is equal to $H(\zeta,\eta)$, 
which completes the proof.
\hfill \(\square\)

\subsection*{Acknowledgements}
The research of Joakim Cronvall was supported by Grant No.\ 2024.0309 
from the Knut and Alice Wallenberg foundation, and Aron Wennman was 
supported by Odysseus Grant G0DDD23N from Research Foundation Flanders (FWO).

\bibliography{bibtex-local.bib}

\bigskip
\bigskip

\noindent
\begin{minipage}[t]{0.55\textwidth}
\noindent \sc Joakim Cronvall\newline
Department of Mathematics \newline
KU Leuven, Belgium
\newline {\tt joakim.cronvall@kuleuven.be}
\end{minipage}
\hfill
\begin{minipage}[t]{0.45\textwidth}
\noindent \sc Aron Wennman\newline
Department of Mathematics \newline
KU Leuven, Belgium
\newline {\tt aron.wennman@kuleuven.be}
\end{minipage}
\end{document}